\newtheorem{theorem}{Theorem}[section]
\newtheorem{thm}[theorem]{Theorem}
\newtheorem{prop}[theorem]{Proposition}
\newtheorem{lem}[theorem]{Lemma}
\newtheorem{fact}[theorem]{Fact}
\newtheorem{cor}[theorem]{Corollary}
\newtheorem{proposition}[theorem]{Proposition}
\newtheorem{lemma}[theorem]{Lemma}
\newtheorem{conj}[theorem]{Conjecture}
\theoremstyle{definition}
\newtheorem{definition}[theorem]{Definition}
\theoremstyle{remark}
\newtheorem{rem}[theorem]{Remark}
\newtheorem{remark}[theorem]{Remark}
\newcommand{\N}{\mathbb{N}}
\newcommand{\R}{\mathbb{R}}
\newcommand{\zar}[2]{\overline{#1}^{{#2}}}
\newcommand{\ff}{\varphi}                   
\newcommand{\al}{\alpha}                    
\newcommand{\be}{\beta}                     
\newcommand{\sub}{\subseteq}    
\newcommand{\st}{\;|\;}
\global\long\def\g{\mathfrak{g}}
\newcommand{\gl}{\mathfrak{gl}}
\global\long\def\h{\mathfrak{h}}
\global\long\def\n{\mathfrak{n}}
\global\long\def\z{\mathfrak{z}}
\global\long\def\ad{\mathrm{ad}}
\global\long\def\Ad{\mathrm{Ad}}
\global\long\def\Aut{\mathrm{Aut}}
\global\long\def\Ctn{\mathrm{Ctn}}
\global\long\def\End{\mathrm{End}}
\global\long\def\GL{\mathrm{GL}}
\global\long\def\id{\mathrm{id}}
\global\long\def\Int{\mathrm{Int}}
\global\long\def\Ker{\mathrm{Ker}}
\global\long\def\Lie{\mathrm{Lie}}
\global\long\def\Reg{\mathrm{Reg}}
\global\long\def\rk{\mathrm{rk}}
\global\long\def\tg#1{\overline{#1}}
\title{Cartan subgroups  and regular points of  o-minimal  groups}
\author{El\'ias Baro}
\address{Departamento de \'Algebra, Facultad de Matem\'aticas, Universidad Complutense de Madrid, 
28040 Madrid, Spain. 
E-mail address: ebaro@ucm.es}
\author{Alessandro Berarducci}
\address{Universit\`a di Pisa, Dipartimento di Matematica, Largo Bruno Pontecorvo 5, 56127 Pisa, Italy. 
E-mail address: alessandro.berarducci@unipi.it}
\author{Margarita Otero}
\address{Departamento de Matem\'aticas, Universidad Aut\'onoma de Madrid, 28049 Madrid, Spain. 
E-mail address: margarita.otero@uam.es}
\thanks{The first and third authors are partially supported by  Spanish MTM2014-55565-P and Grupos UCM 910444. The second author was partially supported by PRIN 2012 Logica, Modelli e Insiemi and by Progetto di
Ricerca d'Ateneo 2015. Connessioni fra dinamica olomorfa, teoria ergodica e logica matematica nei sistemi dinamici.}
\subjclass[2010]{Primary 03C64, 20G15; Secondary 20E34.}
\keywords{o-minimality, definable groups, Cartan subgroups, regular points.}
\date{5 July 2017}
\begin{document}
\begin{abstract} Let $G$ be a group definable in an o-minimal structure $\mathcal{M}$. We prove that the union of the Cartan subgroups of  $G$ is a dense subset of  $G$.  When $\mathcal{M}$ is an expansion of a real closed field we give a characterization of Cartan subgroups of $G$ via their Lie algebras which allow us to prove  firstly, that every Cartan subalgebra of the Lie algebra of $G$ is the Lie algebra of a definable subgroup -- a Cartan subgroup of $G$ --, and secondly, that the set of regular points of $G$ -- a dense subset of $G$ -- is formed by points which belong to a unique Cartan subgroup of $G$.
\end{abstract}
\maketitle

\section{Introduction}
Let $G$ be an arbitrary group, a subgroup $H$ of $G$ is called
a \emph{Cartan subgroup} (in the sense of Chevalley) of $G$
if $H$ is a maximal nilpotent subgroup of $G$, and for any subgroup
$X\unlhd H$ of finite index in $H$, $X$ has finite index in its
normalizer $N_{G}(X)$.

Cartan subgroups have been studied mainly in two classes of groups: algebraic groups and Lie groups. The study in the first class goes via the rigidity of the algebraic context and in the second one through the nice behaviour of analytic functions and the possibility of consider integration, that is, through the exponential map. For example, Bourbaki works  with Lie groups over non-discrete complete ultrametric fields where it makes sense to talk of analyticity and  the exponential map.

Here we work with \emph{groups definable in o-minimal structures}, where we  neither have such a rigidity  as in the algebraic  case nor the nice behaviour of analytic functions. Even worse, integration -- in general --  makes no sense. 
However, at the kernel of the theory there is a finiteness phenomena that allows us to define dimension and  has a good relation with the algebraic context (in the linear case). More important, even though we cannot  integrate,  when our o-minimal structure is  an expansion of a real closed field, we can take derivatives, so Lie algebras make sense. With these tools we will be able to develop a theory concerning both Cartan subgroups and regular points in  groups definable in  o-minimal expansions of real closed fields. {As a general remark, it is worth mentioning that} the regular points in Lie groups have nice properties from which properties of the Cartan subgroups can be deduced. In definable groups we work the other way around: firstly, we  prove  properties of Cartan subgroups making use of ideas of K.H. Neeb  and from there we get new insight about regular points. {Some of the techniques used could be useful in other weak contexts where there is a lack of analiticity  or algebraic structure.}

{In \S 3 we answer positively the main questions left open in \cite{BJO14Cartan}. The study  of Cartan subgroups  of groups definable in  o-minimal structures  was initiated in \cite{BJO14Cartan}, where it was proven that Cartan subgroups of a definable group exist, are definable, and fall into finite many conjugacy classes \cite[Thm.\,1]{BJO14Cartan}. In  Corollary \ref{answers} we prove, among other results, that the union $X$ of the Cartan subgroups of a given definable group  $G$ is a dense subset of  $G$ (see \cite[Question\,84]{BJO14Cartan}).} The latter implies that the set $X$ is \emph{syndetic} in $G$ (also called ``generic'')  i.e., there is a finite $E\sub G$ such that $XE=G$, and we conjecture (Conjeture\,\ref{conj1}) that it suffices to consider the conjugates of a specially chosen Cartan subgroup to get a syndetic subset of $G$.  As far as we know, this is not known even when $G$ is  a simple Lie group.
 
 In \S 4, working with  groups definable in  o-minimal expansions of real closed fields, we give a characterisation of  the concept of Cartan subgroup of a definable group inspired by the corresponding characterization for Lie groups given by  H.K. Neeb in \cite{Neeb96Weaklyexp}. In general, the possible lack of the exponential map (when working with o-minimal structures) implies the lack of an analogue to the Lie correspondence. Nevertheless, some examples of realisations  of Lie subalgebras as Lie algebras of subgroups of a definable group are given in \cite[Claim 2.31]{PPS00Def} via normalizers and centralizers (see Remark\,\ref{norcen}).  Here, the mentioned characterisation allows us to prove that  Cartan subalgebras can be realised as Lie algebras of Cartan subgroups (Corollary\,\ref{Liecorr}).
 
{Finally, in \S 5, we introduce the concept of regular element of a definable group $G$.
Given $g\in G$, the \emph{ rank} $r(g)$ of $g$ is the dimension of the maximal subspace $\g^{1}(\Ad(g))$ of the eigenvalue $1$
of the endomorphism $\Ad(g):\g\to\g$,
and one says that $g$ is regular if the rank function $r$ 
is locally constant around $g$. In the context of Lie groups, it follows immediately from analyticity that the rank assumes a global minimum around any regular point. Moreover, using the exponential map it is possible to show that the global minimum equals the rank
of the Lie algebra $\g$ and that the regular points belong to a unique Cartan subgroup of $G$. Even though we are not working in an analytic context, we will be able to prove all these properties of regular points  in Proposition\,\ref{prop:reg} and Corollary\,\ref{cor:reguniqueCar}. We would like to stress that the path we use could be interesting also in the Lie context. The classical approach \cite{BourbLALG7-9} uses the exponential map to relate the regular points of the group and those of its Lie algebra. We will instead quotient by the centre to assume that $G$ is a definable subgroup of  $\GL(n,R)$, and then we will be able to relate the regular points of $G$ and those of its Zariski closure (Proposition \ref{prop:regequivalences}).}

\section{Preliminary notions}\label{laycsa}

\emph{Definable groups.} We recall that an o-minimal structure $\mathcal{R}=(R,<,\ldots)$
is an ordered set $(R,<)$ equipped with some relations or functions
(for instance field operations) with the property that every subset
of $R$ definable in $\mathcal{R}$ (in the sense of first order logic)
is a finite union of points and intervals $(a,b)$ with $a,b\in R\cup\{\pm\infty\}$.
By the Tarski-Seidenberg result, an ordered field is o-minimal if
and only if it is real closed, so in particular the field of real
numbers is o-minimal. Another example of o-minimal structure -- given by Wilkie \cite{Wilkie96exp}-- is the
ordered field of real numbers with the real exponential function. 

The definition of o-minimality is a condition on definable subsets
of $R$, but it has nice consequences for definable subsets of $R^{n}$.
By the \emph{cell decomposition theorem}, any definable subset of $R^{n}$
can be decomposed into finitely many \emph{cells} (see Proposition \ref{reglarge}) and every definable
function $f:R^{n}\to R$ is piecewise continous, namely it is continous
on each cell of a decomposition of its domain. Here the continuity
refers to the topology induced by the order of $R$. The cell decomposition theorem 
provides also a natural geometric notion of dimension on definable sets.

We shall often assume that $\mathcal{R}$ expands a field (necessarily
real closed). In this case every definable set can be definably triangulated
and every definable function is piecewise differentiable. By the Tarski-Seidenberg
results, when $\mathcal{R}$ is a real closed field with no additional
structure, the definable sets are just the semialgebraic ones. More
generally, the definable sets in an o-minimal structure share many
of the tameness properties of semialgebraic and subanalytic sets,
as the above results already suggest. 

Let us now introduce the notion of definable group. A group $G$ is
definable in the o-minimal structure $\mathcal{R}$ if its domain
is a definable subset of $R^{n}$, for some $n$, and the graph of the
group operation is a definable set. Such groups have been studied
since the 80's, two pioneering papers being \cite{Pillay88} and \cite{Strzebonski94}. 
When $\mathcal{R}$ has field operations, the general
linear group $GL(n,R)$ and its (semi)algebraic subgroups are definable.
By \emph{definable choice} (which holds when $\mathcal{R}$ expands
a field, or even a group), given a definable equivalence relation
$E$ on a definable set $X$, there is a definable choice of representatives
for the equivalence classes, so we can identify $X/E$ with a definable
set. In particular, given a definable group $G$ and a definable normal
subgroup $H$, the quotient group $G/H$ is definable (this holds
in any o-minimal structure \cite{Edmundo03}). 

We can define the algebraic closure $K=R[i]$ of $R$ via the identification
$R[i]=R^{2}$, and speak about algebraic groups over algebraically
closed fields. Thus an elliptic curve over $R$ or $K$ is definable,
and so is any abelian variety (we need to observe that the projective
space $\mathbb{P}^{n}(K)=K^{n+1}/\sim$ can be identified with a definable
set). More examples of definable groups can be found in \cite{Strzebonski94}. Thanks to the above observations, \emph{all the results of this paper apply in particular  to algebraic groups over a real closed or algebraically closed field}. 

Notice that in the definition of definable group, we do not assume
that the group operation is continuous (with respect to the topology
induced by the ambient space $R^{n}$), however \cite{Pillay88} shows
that any definable group $G$ in $\mathcal{R}$ has a unique topology,
called \emph{t-topology}, which makes $G$ into a topological group
and coincides with the topology induced by $R^{n}$ on a large definable
subset $V\subseteq G$, where \emph{large} means of codimension $<\dim(G)$, see Remark \ref{largedense}. The t-topology
makes $G$ into a \emph{definable manifold} (locally definable homeomorphic
to $R^{n}$, with a finite atlas) and when the underlying set of $\mathcal{R}$
is the real line we get an actual topological manifold, so every definable
group in an expansion of the reals is a real Lie group.  When $\mathcal{R}$ is not based on the reals, the order topology
of $R$ is neither locally compact nor locally connected, but there
are definable substitutes for these notions. One says that $G$ is
\emph{definably compact}  if every definable curve $f:(a,b)\to G$
($a,b\in R)$ has a limit in $G$ in the t-topology, and $G$ is \emph{definably connected}
if it has no definable subgroups of finite index. The latter is equivalent
to say that $G$  cannot be partitioned
into two non-empty definable open subsets, which in turn is  equivalent  to the condition
 that any two points of
$G$ can be joined by a definable continous curve $f:(a,b)\to G$.
The connected component $G^o$ of $G$ is the intersection of all
the subgroups of $G$ of finite index. It can be shown that $G^o$
is definable and has finite index in $G$.

\emph{When $\mathcal{R}$ is an expansion of an arbitrary real closed field the definable group manifold can be definably embedded in some  $R^{m}$ so that the t-topology coincides with that of the ambient space}. In this case, definably compact is equivalent to closed and bounded.
Definably compact groups are well understood and there are strong
functorial connections with real Lie groups via the quotient 
by the ``infinitesimal subgroup'' \cite{EO04DCAG, BOPP05DCC,HPP08NIP}.
Much less is known in the non-compact case (but see \cite{Conversano13}), and this paper is a contribution in this direction.  See \cite{Otero08} for basic properties of definable groups.

Except in a few  stated cases, we work over an o-minimal structure $\mathcal{R}$ expanding a
real closed field $R$, $K$ will denote its algebraic closure and $F$ either $R$ or $K$. By \emph{definable}  we mean definable in  $\mathcal{R}$.

\

\emph{Lie algebras and Cartan subalgebras.} All the Lie algebras we are going to consider are  finite dimensional Lie algebras  over  $F$, unless otherwise stated. If 
$\g$ is a Lie algebra over $R$ we will  denote by $\g_{K}:=\g\otimes K$   its complexification, a Lie algebra over $K$. 

We recall some basic definitions and facts about Lie algebras (see \cite[Ch. VII]{BourbLALG7-9}).  
We will use the following notation. Let $V$ be a  finite dimensional $F$-vector space,  $\ff\in End(V)$ and $s\in F$ then $${V}^s(\ff):= \{X\in V\st (\ff-s\id)^n(X)=0\text{ for some } n\in\N\}.$$
 A subalgebra $\h$ of a Lie algebra $\g$ is a \emph{Cartan
subalgebra}  of $\g$ if it is nilpotent and $\h=\mathfrak{n}_{\g}(\h):=\left\{ X\in\g:\left[X,\h\right]\subseteq\h\right\} $. A Cartan subalgebra of $\g$  is maximal among nilpotent subalgebras of $\g$. If $\h$ is a Cartan subalgebra of $\g$ then its complexification $\h_{K}$ is a Cartan subalgebra of $\g_{K}$.   

 Let $\g$ be a Lie algebra and $\h$ a nilpotent subalgebra. Let 
$\rho:\h_K\to \gl({\g_K}): Z\mapsto \ad(Z)$ be the restriction of the adjoint representation of $\g_K$.
 A linear form $\lambda:\h_K \rightarrow K$ is a \emph{root} if there is $X\in \g_K$, $X\neq 0$, such that 
$\rho(Z)(X)=\lambda(Z)X, \text{ for all }Z\in \h \text{ (or equivalently for all  $Z\in\h_K$)}.$
 $\Lambda(\g,\h)$ denotes the set of roots of $\g$ with respect to $\h$. Given $\lambda\in \Lambda(\g,\h)$, 
$$\g_K^\lambda(\h_K):=\{X\in \g_K: \forall Z\in \h_K, [\rho(Z)-\lambda(Z)\id]^n(X)=0 \text{ for some } n \in \N\}.$$
{Note that if  $\lambda\in \Lambda(\g,\h)$ is such that $\lambda(\h)\sub R$ then} $\g_K^\lambda(\h_K)$ is the complexification of 
$$\g^\lambda(\h):=\{X\in \g: \forall Z\in \h, [\rho(Z)-\lambda(Z)\id]^n(X)=0 \text{ for some } n \in \N\}.$$

 By Lie's Theorem and since $\h$ is solvable, roots exist. Moreover, $\g_K=\bigoplus_{\lambda\in \Lambda} \g_K^\lambda(\h_K)$.
In fact, for each $Z\in \h_K$ we have that $\{\lambda(Z):\lambda\in \Lambda(\g,\h)\}$ is the set of eigenvalues of $\rho(Z)$ and $\g_K=\bigoplus_{\lambda\in \Lambda} \g_K^{\lambda(Z)}(\h_K)$.

Clearly $0: \h_K\rightarrow K:Z \mapsto 0$ is a root since  $\h$ is nilpotent. Moreover,  we have the following. 

i) $\h \subseteq \g^0(\h)$; 

{ii)} $\mathfrak{\mathfrak{n}}_{\g}(\g^{0}(\h))=\g^{0}(\h)$, in particular
the same holds for the complexifications, and

{iii) }$\h$ is a Cartan subalgebra of $\g$ of if and only if $\h=\g^{0}(\h)$.

Next we consider our  definable context. We will use results of \S 2.3 of \cite{PPS00Def}  and  \S 2 of \cite{PPS02Lin} concerning basic facts of Lie algebras over $R$.  

Given a definable $C^{1}$-manifold $M$ and a point $a\in M$ we
define the \emph{tangent space} $T_{a}(M)$ as the set of all equivalence
classes of definable $C^{1}$-curves $\alpha:I\to M$ with $\alpha(0)=a$,
where two curves are equivalent if they are tangent at $0$. We denote
by $\tg{\alpha}$ the equivalence class of $\alpha$ and we endow
$T_{a}(M)$ with the natural vector space structure as in the classical
case. 

\begin{rem}
\label{tg-space}Given a local chart $\varphi:U\to R^{n}$
around $a\in U\subseteq M$ we can identify $T_{a}(M)$ with $R^{n}$ via the isomorphism 
sending $\tg{\alpha}$ to $(\varphi\circ\alpha)'(a)$. 
\end{rem}
\begin{fact}
Given a definable finite dimensional vector space $V$ we identify $T_{e}(\GL(V))$
with $\End(V)$ via the natural isomorphism 
sending $\tg{\alpha}$ to $\alpha'(0):=\lim_{t\to0}\frac{\alpha(t)-\alpha(0)}{t}$
where the curve $\alpha$ takes values in $\GL(V)$ and the limit is
taken in $\End(V)\supseteq \GL(V)$. 
\end{fact}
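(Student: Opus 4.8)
The plan is to exhibit $\GL(V)$ as an open definable submanifold of the ambient vector space $\End(V)$, and then to read off the asserted identification directly from Remark~\ref{tg-space}, using the inclusion as a global chart. The whole content of the statement is the openness of $\GL(V)$ in $\End(V)$, which reduces the computation of the tangent space at $e$ to the trivial case of an open subset of a vector space, where the derivative of a curve is taken literally in the ambient space.

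First I would observe that $\End(V)$ is a definable finite dimensional $F$-vector space, so after a choice of basis it is definably isomorphic to some $F^{m}$, with $m=(\dim V)^{2}$. The subset $\GL(V)\subseteq\End(V)$ is the locus where the determinant — a definable polynomial map — does not vanish, so $\GL(V)$ is a definable \emph{open} subset of $\End(V)$ containing the identity endomorphism $e$. Being open in a vector space, $\GL(V)$ is a definable $C^{1}$-manifold admitting the single global chart $\varphi:\GL(V)\inm\End(V)$ given by the inclusion.

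Now I would apply Remark~\ref{tg-space} to this chart at $a=e$: it provides an isomorphism $T_{e}(\GL(V))\iso\End(V)$ sending $\tg{\alpha}$ to $(\varphi\circ\alpha)'(0)$. Since $\varphi$ is the inclusion, $\varphi\circ\alpha=\alpha$ regarded as a definable $C^{1}$-curve in $\End(V)$, and hence $(\varphi\circ\alpha)'(0)=\alpha'(0)=\lim_{t\to0}\frac{\alpha(t)-\alpha(0)}{t}$, where the difference and quotient are computed in $\End(V)$ and $\alpha(0)=e$. This is exactly the map in the statement; well-definedness (independence of the representative of $\tg{\alpha}$), $F$-linearity, and bijectivity are all inherited from Remark~\ref{tg-space}.

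As a sanity check on surjectivity, I would note that since $\GL(V)$ is open and contains $e$, for any $A\in\End(V)$ the definable curve $\alpha(t)=e+tA$ lies in $\GL(V)$ for $t$ in a small interval around $0$, satisfies $\alpha(0)=e$, and has $\alpha'(0)=A$, so every endomorphism is realised. I do not expect any genuine obstacle: the only point requiring care is the observation that $\GL(V)$ is open in $\End(V)$ (so that the inclusion is a chart), after which the claim is an immediate instance of Remark~\ref{tg-space}.
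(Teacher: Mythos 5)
Your proof is correct and is exactly the justification the paper leaves implicit: the statement is presented as a \emph{Fact} without proof, and it follows at once from Remark~\ref{tg-space} upon noting, as you do, that $\GL(V)$ is a definable open subset of $\End(V)$ (non-vanishing of the determinant), so the inclusion is a global chart and derivatives of curves are computed literally in the ambient space. Your surjectivity check via $\alpha(t)=e+tA$ confirms the identification, and nothing is missing.
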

\begin{rem}\label{sumprod}{If $\tg{\alpha},\tg{\beta}\in T_{e}(\GL(V))$ then 
$\tg{\alpha}+\tg{\beta}=\tg{\alpha\cdot\beta}$, where $(\al\cdot\be)(t)=\al(t)\circ\be(t)$.}
\end{rem}

\section{Cartan subgroups}

{The main purpose of this section is to prove that given a definable
group $G$ in an arbitrary o-minimal structure $\mathcal{M}$, all Cartan subgroups of $G$ have the same dimension and their union is dense in $G$ (Corollary \ref{answers}). We will show that both problems can be reduced to the case of $G$ being a subgroup of $\GL(n,R)$ definable in an o-minimal expansion $\mathcal{R}$ of a field. In the latter context, the two key ingredients will be the relation between Cartan subgroups of $G$ and those of its Zariski closure in $\GL(n,R)$ (Proposition \ref{chev}) and a kind of ''identity principle'' for definable groups (see Proposition \ref{Zcap} and Remark \ref{identprinc}).}

We survey some preliminary results proved in \cite{BJO14Cartan} for  o-minimal structures (not necessarily expansions of  real closed fields).

\begin{fact}\label{Cartan1} 
Let $G$ be a  group definable in an o-minimal structure $\mathcal{M}$. Then,

\emph{(1)\,\cite[Thm.\,1]{BJO14Cartan}} Cartan subgroups of  $G$ exist, are definable in $\mathcal{M}$ and they fall into finite many conjugacy classes, and
 
%


\emph{(2)\,\cite[Cor.\,75]{BJO14Cartan}} if $G$ is definably connected and $H$ is a Cartan subgroup
of $G$ then $H=C_{G}(H^o)H^o$, in particular if $H_{1}$ and
$H_{2}$ are Cartan subgroups of $G$ then $H_{1}^o=H_{2}^o$
implies $H_{1}=H_{2}$.
\end{fact}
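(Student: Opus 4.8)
For part (1) the plan is to exploit the o-minimal dimension theory: every definable subgroup of $G$ has a dimension bounded by $\dim(G)$, and among definably connected subgroups the dimension together with inclusion gives a descending chain condition. I would first note that a definably connected nilpotent subgroup has nilpotency class bounded in terms of $\dim(G)$, so subgroups of this kind of maximal dimension exist. To produce a genuine Cartan subgroup together with its definability, rather than work with an arbitrary maximal nilpotent subgroup I would build one by an iterated--normalizer construction: starting from a definably connected nilpotent subgroup $N$ of maximal dimension, the chain $N \sub N_{G}(N)^o \sub N_{G}(N_{G}(N)^o)^o \sub \cdots$ stabilizes by the chain condition at a definable, definably connected, self-normalizing nilpotent subgroup, which I take as $H^o$; setting $H=N_{G}(H^o)$ (or the appropriate finite-index-closed enlargement) yields a definable nilpotent subgroup, each normalizer and connected component being definable. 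The step requiring care is the Chevalley condition, that $[N_{G}(X):X]<\infty$ for every finite-index $X\unlhd H$; here one uses that $H^o$ is self-normalizing among definably connected subgroups, so that $N_{G}(X)^o=H^o$ and hence $N_{G}(X)/X$ is finite.

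For the finiteness of the number of conjugacy classes I would relate Cartan subgroups to generic elements. Each Cartan subgroup $H$ meets an open definable set of elements $g$ from which $H^o$ is recovered canonically (for instance as the stabilized normalizer tower built from $(C_{G}(g))^o$), so that conjugate such elements give conjugate Cartan subgroups; the number of classes is then bounded by the number of orbits of these canonical data, which o-minimal finiteness of definable families forces to be finite via a fiber-dimension argument.

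Part (2) I would prove by analysing the conjugation action of $H$ on the definably connected nilpotent normal subgroup $H^o$, with $H/H^o$ finite. The substantive inclusion is $H\sub C_{G}(H^o)H^o$: for $h\in H$ one seeks $x\in H^o$ with $hx^{-1}$ centralizing $H^o$. I would obtain this from a surjectivity property of commutator maps in definably connected nilpotent groups --- descending the lower central series of $H^o$ and using connectedness at each step to solve the relevant equations $[h,x]=\cdots$ --- so that the action of $h$ on $H^o$ is trivialized after multiplication by a suitable $x$. For the reverse inclusion, $C_{G}(H^o)^o$ is a definably connected subgroup commuting elementwise with $H^o$, so $H^o\cdot C_{G}(H^o)^o$ is again definably connected and nilpotent; maximality of $H^o$ forces $C_{G}(H^o)^o\sub H^o$, and the Chevalley finite-index condition places the finitely many remaining cosets of $C_{G}(H^o)$ inside $H$. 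The ``in particular'' is then immediate: if $H_1^o=H_2^o=:N$ then $H_1=C_{G}(N)N=H_2$. The hard part throughout is the combination of arguments in (1) --- making the construction produce a \emph{definable} subgroup satisfying the exact finite-index condition and controlling the conjugacy classes --- and, in (2), replacing the analytic commutator-surjectivity of the Lie setting by a purely definable argument based on connectedness, since here we have no exponential map.
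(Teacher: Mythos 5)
This statement is not proved in the paper at all: it is quoted as a Fact, with explicit pointers to \cite{BJO14Cartan} (Thm.\,1 and Cor.\,75), where the proof occupies a substantial part of that paper and runs through the solvable case (Fr\'econ-style Carter-subgroup arguments) and the semisimple case (reduction to definably simple linear groups and transfer from Lie groups). Your sketch is not a correct reconstruction, and the gaps are structural rather than cosmetic. For existence, the normalizer tower $N\sub N_{G}(N)^o\sub N_{G}(N_{G}(N)^o)^o\sub\cdots$ does not stay inside the class of nilpotent groups: already for $G=\mathrm{SL}(2,R)$ and $N$ the unipotent upper-triangular subgroup (which is of maximal dimension among definably connected nilpotent subgroups, all of dimension $1$ here), one has $N_{G}(N)^o=B^o$, the connected Borel, and the tower stabilizes at $B^o$, which is solvable but not nilpotent; so the stabilized group cannot serve as $H^o$, and nothing in your argument guarantees a \emph{nilpotent} self-normalizing limit. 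Likewise $H:=N_{G}(H^o)$ is in general not nilpotent even when $H^o$ is a genuine Carter subgroup: for $Q$ the connected diagonal subgroup of $\mathrm{SL}(2,R)$, $N_{G}(Q)$ contains the Weyl element $w$ with $w\,\mathrm{diag}(t,t^{-1})\,w^{-1}=\mathrm{diag}(t^{-1},t)$, and its lower central series never terminates; the correct enlargement is $C_{G}(Q)Q$, which is exactly the content of part (2) and of \cite{BJO14Cartan} Lem.\,5, so your construction is circular precisely at the delicate point. As for finitely many conjugacy classes, the mechanism you invoke is not valid: o-minimality does not force a definable family of subgroups to have finitely many conjugacy classes, and the device of recovering $H^o$ canonically from a generic element is essentially the regular-point theory that the present paper develops only in \S\S 4--5, under the extra hypothesis that the structure expands a real closed field, whereas the Fact is stated over an arbitrary o-minimal $\mathcal{M}$.

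In part (2), your inclusion $H\sub C_{G}(H^o)H^o$ is plausibly completable (the enabling fact is divisibility of definably connected nilpotent definable groups, which makes homomorphism groups out of $H^o$ torsion-free and lets the class-descending commutator argument run; this should be cited, not attributed to bare ``connectedness''), but the reverse inclusion is where the proof breaks. First, nilpotency of $H^o\cdot C_{G}(H^o)^o$ is not free: for elementwise-commuting subgroups $A,B$ one has $\gamma_k(AB)=\gamma_k(A)\gamma_k(B)$, so you would need $C_{G}(H^o)^o$ nilpotent, which is part of what is being proved (the repair is $C_{G}(H^o)^o\le N_{G}(H^o)^o=H^o$, using the Chevalley condition for $X=H^o$). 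Second, and decisively, the claim that ``the Chevalley finite-index condition places the finitely many remaining cosets of $C_{G}(H^o)$ inside $H$'' is false, and your argument nowhere uses the hypothesis that $G$ is definably connected, which is essential: in the semialgebraic group $G=S_3\times \mathrm{SO}(2)$, the subgroup $H=\langle(12)\rangle\times \mathrm{SO}(2)$ is a Cartan subgroup (maximal nilpotent, and the Chevalley condition holds since any finite-index subgroup contains the divisible factor $1\times\mathrm{SO}(2)$), yet $H^o=1\times \mathrm{SO}(2)$ is central in $G$, so $C_{G}(H^o)H^o=G\neq H$, while every hypothesis you actually invoke is satisfied. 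An argument of the shape you propose would therefore prove a false statement for non-connected $G$; in \cite{BJO14Cartan} the equality $H=C_{G}(H^o)H^o$ for definably connected $G$ is extracted from the theory of Carter subgroups (their Lemma 5), not from the Chevalley condition alone.
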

Let $G$ be a  group definable in an  o-minimal structure $\mathcal{M}$.  We recall that a \emph{Carter subgroup} of $G$ is a definably connected
nilpotent subgroup of $G$ which has finite index in its normalizer. 
\begin{rem}
\label{Carter-Cartan}Let $G$ be a  group definable in an o-minimal structure $\mathcal{M}$. For each Carter
subgroup $Q$ of $G$, $H:=C_G(Q)Q$ is the unique  Cartan subgroup  of $G$ containing $Q$ by \cite[Lem.\,5]{BJO14Cartan}.
The definably connected component of a Cartan subgroup is a Carter subgroup.  Moreover, if $Q$ is a Carter subgroup of $G$, then $Q$ is contained in a maximal nilpotent 
subgroup $\widetilde{Q}$ of $G$, and any such subgroup $\widetilde{Q}$ is a Cartan subgroup of $G$ with $\widetilde{Q}^o=Q$ (see \cite[Lem.\,5\,(b)]{BJO14Cartan}).
\end{rem}

We go back to our setting of groups definable in our o-minimal expansion $\mathcal R$ of a real closed field $R$, where we can make sense of the Lie algebra of  a definable group.  \emph{For a definable group, its Lie algebra will be denoted (unless otherwise stated)  by the corresponding lower case letter in  gothic font.
}

\begin{rem}\label{norcen}
We recall that if $G$ is a definable group and $\h$ is a subspace
of its Lie algebra $\g$ then $\Lie(N_{G}(\h))=\n_{\g}(\h)$
and $$\Lie(Z_{G}(\h))=\mathfrak{z}_{\g}(\h):=\left\{ X\in\g:\left[X,\h\right]=0\right\}, $$
where 
\[
N_{G}(\h):=\left\{ g\in G: \Ad(g)(\h)\subseteq\h \right\} 
\text{ and }
Z_{G}(\h):=\left\{ g\in G:\Ad(g)_{|\h}=\id_{\h}\right\} 
\]
 are definable subgroups of $G$. Also that if $H$ is a normal definable
subgroup of $G$ and $\h$ its Lie algebra (an ideal of $\g$) then
$\Lie(G/H)=\g/\h$.
\end{rem}
\begin{lem}\label{normycen}
Let $G$ be a definable group and let $H$ be a definable connected subgroup
of $G.$ Then $N_{G}(H)=N_{G}(\h)$ and $Z_{G}(H)=Z_{G}(\h)$.
\end{lem}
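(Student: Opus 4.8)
The plan is to prove each equality by splitting it into an elementary inclusion, obtained by differentiating conjugation, and a reverse inclusion, obtained by an exponential-free "a full Lie algebra forces the whole connected group" argument. Throughout I write $c_g\colon G\to G$, $c_g(x)=gxg^{-1}$, for the definable inner automorphism, so that $d(c_g)_e=\Ad(g)\colon\g\to\g$. The easy inclusions $N_G(H)\subseteq N_G(\h)$ and $Z_G(H)\subseteq Z_G(\h)$ go as follows. If $g\in N_G(H)$ then $c_g$ restricts to a definable automorphism of $H$, whose differential at $e$ carries $\h=T_eH$ into $T_eH=\h$; hence $\Ad(g)(\h)\subseteq\h$ and $g\in N_G(\h)$. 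If in addition $g\in Z_G(H)$ then $c_g|_H=\id_H$, so $\Ad(g)|_\h=\id_\h$ and $g\in Z_G(\h)$.

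For $N_G(\h)\subseteq N_G(H)$ I would take $g\in N_G(\h)$ and note that $\Ad(g)$, being invertible and sending $\h$ into $\h$, sends $\h$ onto $\h$; consequently the definable connected subgroup $gHg^{-1}$ has Lie algebra $\Ad(g)(\h)=\h=\Lie(H)$. I would then pin down $H$ by intersecting: the identity $\Lie(H\cap gHg^{-1})=\h\cap\h=\h$ gives $\dim(H\cap gHg^{-1})=\dim H$, so $(H\cap gHg^{-1})^o$ is a definably connected subgroup of $H$ of full dimension, hence of finite index in $H$, hence equal to $H$ (a definably connected group has no proper definable subgroup of finite index). Thus $H\subseteq gHg^{-1}$, and equality of dimensions forces $gHg^{-1}=H$, i.e.\ $g\in N_G(H)$.

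For $Z_G(\h)\subseteq Z_G(H)$ I would run the same machine on a different subgroup. Given $g\in Z_G(\h)$, consider $C_H(g)=H\cap Z_G(g)$, where $Z_G(g)=\{x\in G:c_g(x)=x\}$ is the fixed-point subgroup of $c_g$, with Lie algebra the fixed space $\{X\in\g:\Ad(g)(X)=X\}$. Since $\Ad(g)|_\h=\id_\h$, this fixed space contains $\h$, so $\Lie(C_H(g))=\h\cap\{X\in\g:\Ad(g)(X)=X\}=\h$; as before $C_H(g)^o=H$, whence $H=C_H(g)\subseteq Z_G(g)$ and $g\in Z_G(H)$.

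The hard part, and the place where the lack of an exponential map is felt, lies in the two Lie-algebra identities used above: $\Lie(H\cap K)=\h\cap\Lie(K)$ for definable subgroups $H,K$, and $\Lie(Z_G(g))=\{X\in\g:\Ad(g)(X)=X\}$ for the fixed-point subgroup. In each the inclusion $\subseteq$ is immediate by differentiating curves, whereas the reverse inclusion asserts that a tangent vector in the prescribed subspace is realised by a definable curve lying inside the subgroup, which is exactly what $\exp$ delivers classically. In the o-minimal setting I would extract these from the basic theory of Lie algebras of definable groups in \cite{PPS00Def,PPS02Lin}, together with the equality $\dim(\cdot)=\dim\Lie(\cdot)$ and the characterisation of definable connectedness recalled in Section \ref{laycsa}; granting these, the rest of the argument is formal.
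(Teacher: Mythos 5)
Your proof is correct, and for the normalizer equality it is essentially the paper's argument made more granular: the paper computes $\Lie(H^{g})=\Ad(g)(\h)=\h=\Lie(H)$ and concludes $H^{g}=H$ by quoting the standard fact that a definably connected definable subgroup is determined by its Lie algebra, while you re-derive that uniqueness via $\Lie(H\cap gHg^{-1})=\h$ plus ``full dimension implies finite index implies everything''. For the centralizer equality the routes genuinely differ. The paper invokes rigidity of definable homomorphisms on a definably connected domain: $\Ad(g)|_{\h}=\id_{\h}$ forces $\Int(g)|_{H}=\id_{H}$. You instead run a dimension count on the fixed-point subgroup $C_{H}(g)=H\cap Z_{G}(g)$, which requires the identity $\Lie(Z_{G}(g))=\{X\in\g:\Ad(g)(X)=X\}$. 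That identity is true, but it is the one deferred ingredient that is not obviously stated in \cite{PPS00Def,PPS02Lin} (its classical proof is exactly an application of $\exp$), so flagging it as extractable from the literature is the weakest point of your write-up. Two exp-free repairs are available. (a) Prove it by orbit--stabilizer: the conjugacy orbit map $x\mapsto xgx^{-1}$ satisfies $\pi\circ L_{y}=\Int(y)\circ\pi$, hence has constant rank equal to $\dim\g-\dim\ker(\Ad(g)-\id)$; since for definable $C^{1}$ maps the image dimension equals the constant rank, $\dim C_{G}(g)=\dim G-\dim g^{G}=\dim\ker(\Ad(g)-\id)$, and the easy inclusion finishes it. (b) Bypass it entirely: apply your own intersection identity inside $H\times G$ to the graphs of $\Int(g)|_{H}$ and of the inclusion $H\hookrightarrow G$; both are definably connected definable subgroups with Lie algebra the graph of $\id_{\h}$, so they coincide, which is precisely the paper's rigidity step. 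Note also that your intersection identity $\Lie(H_{1}\cap H_{2})=\h_{1}\cap\h_{2}$, which you correctly isolate as load-bearing, follows from a constant-rank count on the multiplication map $H_{1}\times H_{2}\to H_{1}H_{2}$ together with the fiber-dimension theorem, so it too is within the paper's toolkit even if no verbatim citation exists. In sum: your approach buys a self-contained reduction that localizes exactly where $\exp$ is classically used, at the cost of one extra Lie-algebra identity; the paper's buys brevity by packaging both halves into the standard rigidity facts for definably connected groups.
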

\begin{proof}
If $g\in N_{G}(H)$ then $H^{g}\subseteq H$ so that $\Ad(g)\h\subseteq\h$.
For the converse, take $g\in N_{G}(\h)$ then $\Lie(H^{g})=\Ad(g)(\h)=\h=\Lie(H)$
and therefore $H^{g}=H$, since $H$ is definably connected.

On the other hand, if $g\in Z_{G}(H)$ then the map $\Int(g):H\to H:h\mapsto ghg^{-1}$
is the identity in $H$ so its derivative $\Ad(g)_{|\h}$ is the identity
on $\h$. For the other inclusion, take $g\in Z_{G}(\h)$ and observe
that since $H$ is definably connected,
$\Ad(g)_{|\h}=\id_{\h}$ implies $\Int(g)_{|H}=\id_{H}$.
\end{proof}

\begin{lem}
\label{LAcomm}Let $G$ be a definable group and let
$H_{1}$ and $H_{2}$ be definable subgroups such that the commutator $H:=[H_{1},H_{2}]$
is definable. Then, $[\h_{1},\h_{2}]\subseteq\h$.
\end{lem}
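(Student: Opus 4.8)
The plan is to realize the bracket $[X,Y]$, for $X\in\h_{1}$ and $Y\in\h_{2}$, as an iterated derivative of group commutators $\alpha(t)\beta(s)\alpha(t)^{-1}\beta(s)^{-1}$, all of which lie in $H=[H_{1},H_{2}]$ by definition of the commutator subgroup. Since $\h=\Lie(H)=T_{e}(H)$ is a linear subspace of $\g$, it suffices to produce a curve confined to $H$ whose relevant derivative is $[X,Y]$. Fix $X\in\h_{1}$ and $Y\in\h_{2}$ and, using the description of the Lie algebra as the tangent space at $e$, choose definable $C^{1}$-curves $\alpha\colon I\to H_{1}$ and $\beta\colon J\to H_{2}$ with $\alpha(0)=\beta(0)=e$, $\alpha'(0)=X$ and $\beta'(0)=Y$.

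First I would carry out the \emph{inner} differentiation. For each fixed $t\in I$ consider the definable $C^{1}$-curve $c_{t}\colon J\to H$ given by $c_{t}(s):=\alpha(t)\beta(s)\alpha(t)^{-1}\beta(s)^{-1}$, which passes through $e$ at $s=0$; hence $c_{t}'(0)\in T_{e}(H)=\h$. Writing $c_{t}(s)=\Int(\alpha(t))(\beta(s))\cdot\beta(s)^{-1}$ as a product of two curves based at $e$ and applying the product rule for tangent vectors (the analogue for $G$ of Remark \ref{sumprod}) together with the chain rule, I obtain $c_{t}'(0)=\Ad(\alpha(t))(Y)-Y$, the first summand using $\Ad(g)=d(\Int g)_{e}$ and the second using that the derivative of inversion at $e$ is $-\id$. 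Therefore $\Ad(\alpha(t))(Y)-Y\in\h$ for every $t\in I$.

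It remains to perform the \emph{outer} differentiation. The map $t\mapsto\Ad(\alpha(t))(Y)-Y$ is a definable $C^{1}$-curve taking values in the linear subspace $\h$ and vanishing at $t=0$; since $\h$ is closed in $\g\cong R^{k}$, its derivative at $t=0$ again lies in $\h$. That derivative equals $\big(\tfrac{d}{dt}\big|_{0}\Ad(\alpha(t))\big)(Y)=\ad(X)(Y)=[X,Y]$, using $\ad=d\Ad_{e}$. Hence $[X,Y]\in\h$, and as $X\in\h_{1}$, $Y\in\h_{2}$ were arbitrary, $[\h_{1},\h_{2}]\subseteq\h$.

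The routine content is clear, so the real work lies in justifying the differentiation formulas in the o-minimal $C^{1}$-category: the product rule for tangent vectors of curves based at $e$, the identities $\Ad=d(\Int)_{e}$ and $\ad=d\Ad_{e}$, and the fact that inversion has derivative $-\id$ at $e$. These are the standard infinitesimal computations underlying the Lie algebra of a definable group, available from \cite{PPS00Def} and \cite{PPS02Lin}; in the ambient $\GL$ the product rule is exactly Remark \ref{sumprod}. The one point deserving explicit care is that differentiating a curve confined to $\h$ keeps its derivative in $\h$, which is immediate since all difference quotients lie in $\h$ and $\h$ is topologically closed. I expect this bookkeeping, rather than any genuine difficulty, to be the main thing to get right.
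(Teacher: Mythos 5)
Your proof is correct and takes essentially the same route as the paper's: there, too, the key step is to identify $\Ad(\alpha(t))(\tg{\beta})-\tg{\beta}$ (via Remark \ref{sumprod}) with the tangent vector at $e$ of the commutator curve $s\mapsto[\alpha(t),\beta(s)]$, which lies in $H$, and then to obtain $[X,Y]$ as $\lim_{t\to 0}\frac{1}{t}\bigl(\Ad(\alpha(t))(\tg{\beta})-\tg{\beta}\bigr)$, which remains in the closed linear subspace $\h$. Your ``inner/outer differentiation'' structure is just a repackaging of that same computation.
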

\begin{proof}
Let $\tg{\alpha}\in\h_{1}$ and $\tg{\beta}\in\h_{2}$.
Then, by definition 
\begin{align*}
[\tg{\alpha},\tg{\beta}] & =\ad(\tg{\alpha})(\tg{\beta}) =(\Ad\circ\alpha)'(0)(\tg{\beta}) =\lim_{t\to0}\frac{1}{t}\left((\Ad\circ\alpha)(t)-I\right)(\tg{\beta})\\
 & =\lim_{t\to0}\frac{1}{t}\left(\Ad(\alpha(t))(\tg{\beta})-\tg{\beta}\right) =\lim_{t\to0}\frac{1}{t}\left(\tg{\Int(\alpha(t))\circ\beta}-\tg{\beta}\right)\\
 & =\lim_{t\to0}\frac{1}{t}\left(\tg{s\mapsto[\alpha(t),\beta(s)]}\right)\in\h
\end{align*}
as desired (the last equality follows by Remark\,\ref{sumprod}). 
\end{proof}

Let $G$ be a definably connected  group. Then, clearly $G$ is abelian if and only $\g$ is abelian.  In \cite{PPS00Def} it is proved that  $G$ is semisimple if and only $\g$ is semisimple. We next prove the corresponding statement for nilpotency and solvability. We first need the following.
\begin{fact}[{\cite[\S\,6]{BJO12Comm}}]
\label{comm}  Let $G$ be a solvable definable group.
Then the subgroups of $G$ of the lower central series and the derived
series of $G$ are definable, and definably connected if so it is
$G.$
\end{fact}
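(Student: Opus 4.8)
The plan is to construct the terms of both series one at a time by induction, the single engine being the following o-minimal fact about commutators: for any two definable subgroups $H,K\le G$ the commutator subgroup $[H,K]$ is definable, and it is definably connected whenever $H$ and $K$ are. Granting this, the lower central series is handled by $\gamma_{1}(G)=G$ and $\gamma_{n+1}(G)=[G,\gamma_{n}(G)]$, applying the fact with the already-constructed definable (and, when $G$ is definably connected, definably connected) subgroup $\gamma_{n}(G)$; the derived series is handled identically with $G^{(0)}=G$ and $G^{(n+1)}=[G^{(n)},G^{(n)}]$, noting that $G^{(n)}$ is definably connected when $G$ is. Solvability of $G$ guarantees that the derived series terminates at $\{e\}$ after finitely many steps and, together with the descending chain condition on definable subgroups, keeps the whole construction inside a controlled descending chain of definable subgroups.

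To prove the key fact, I would set $C=\{[h,k]:h\in H,\,k\in K\}$ and $X=C\cup C^{-1}\cup\{e\}$, a symmetric definable subset containing $e$, so that $[H,K]=\bigcup_{n\in\N}X^{n}$. Each $X^{n}$ is definable and the dimensions $\dim X^{n}$ form a non-decreasing sequence bounded by $\dim G$; hence they stabilize, and a generic-element and component-counting argument upgrades this to the assertion that $X^{N}$ is already a subgroup for a suitable $N$, so $[H,K]=X^{N}$ is definable. For definable connectedness, note that both $C$ and $C^{-1}$ are images of the definably connected set $H\times K$ under the definable continuous maps $(h,k)\mapsto[h,k]$ and $(h,k)\mapsto[h,k]^{-1}$; when $H$ and $K$ are definably connected these images are definably connected and share the point $e$, so $X$ is definably connected, and then each $X^{n}$, being a continuous image of $X\times\cdots\times X$ under multiplication, is definably connected as well; in particular $[H,K]=X^{N}$ is definably connected.

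The main obstacle is precisely the passage from \emph{the dimensions $\dim X^{n}$ stabilize} to \emph{$X^{N}$ is a subgroup}. The dimension bound alone only says the increasing union cannot keep growing in dimension; to conclude that it actually closes up after finitely many products one needs the finer tameness of the o-minimal setting, namely that the number of top-dimensional definably connected components of $X^{n}$ also stabilizes and that a generic element of the stabilized set witnesses closure under multiplication. This is exactly the technical content packaged in the cited commutator machinery of \cite{BJO12Comm}, and everything else in the statement is a formal induction built on top of it.
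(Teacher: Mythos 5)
The first thing to say is that the paper contains no proof of this statement to compare against: it is a quoted Fact, imported wholesale from \cite[\S\,6]{BJO12Comm}. Your outer scaffolding is fine as far as it goes: granting the key fact about definability and connectedness of $[H,K]$ for the pairs that actually occur (note these always satisfy a normalization condition, since $\gamma_n(G)$ and $G^{(n)}$ are normal), the induction along the two series and the connectedness argument via continuous images of $H\times K$ in the t-topology are correct. The problem is your proof of the key fact itself.

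The step ``$\dim X^{n}$ stabilizes, and a generic-element and component-counting argument upgrades this to $X^{N}$ being a subgroup'' is not a gap to be filled later but an inference that is false over a general o-minimal structure. Take $R$ a nonarchimedean real closed field, $G=(R,+)$, and $X=[-1,1]$: then $X$ is definable, symmetric, definably connected and contains $e$, and $X^{n}=[-n,n]$ has constant dimension $1$ and a single definably connected component for every $n$, yet $X^{n}\subsetneq X^{n+1}$ for all $n$ and $\bigcup_{n}X^{n}$ (the convex subgroup of finite elements) is not definable. So stabilization of dimension together with stabilization of the number of top-dimensional components does not witness closure, and nothing in your sketch uses that $X$ consists of commutators, which is the only thing that could save the argument. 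Relatedly, your ``key fact'' as stated --- definability of $[H,K]$ for arbitrary definable subgroups, with no solvability or normalization hypothesis --- is stronger than what \cite{BJO12Comm} proves and than what is known: there is no o-minimal analogue of Zilber's indecomposability theorem, and your proposed proof never uses the solvability that sits in the hypothesis of the Fact. The actual argument of \cite{BJO12Comm} uses it essentially: roughly, one inducts on the derived series to reduce to commutators $[A,H]$ with $H$ abelian and normalized by $A$, where for fixed $a\in A$ the map $H\to H$, $x\mapsto x^{-1}x^{a}$, is a definable homomorphism, so each $[a,H]$ is already a definable, definably connected subgroup; one then concludes by the o-minimal analogue of the classical fact that the subgroup generated by a definable family of definably connected subgroups is a finite product of them, hence definable and definably connected --- and here the dimension bookkeeping is sound, because at the level of subgroups a strict inclusion of a definable subgroup in a definably connected one of the same dimension is impossible, in contrast with the interval example above. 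Your proposal is missing exactly this use of solvability and of the endomorphism structure of commutation against an abelian layer; as written, its engine does not run. (Your remark about termination of the derived series and the descending chain condition is beside the point: what needs proof is definability of each term, not that the chains stop.)
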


\begin{prop}
\label{defgpLA}Let $G$ be a definable group
and $H\leq G$ definably connected. Then, 

$(1)$ $H$ nilpotent if and only if $\mathfrak{h}$ nilpotent;

$(2)$ $H$ solvable if and only if $\mathfrak{h}$ solvable, and 

$(3)$ $H$ Carter subgroup of $G$ if and only if $\mathfrak{h}$ Cartan
subalgebra of $\mathfrak{g}$.
\end{prop}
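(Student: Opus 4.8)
The plan is to prove the three equivalences by relating the group-theoretic notions to their Lie-algebra counterparts through the already-established dictionary, reducing where possible to the statement about normalizers in Lemma \ref{normycen} and Remark \ref{norcen}. For $(1)$ and $(2)$, the natural strategy is to use the lower central series and the derived series. By Fact \ref{comm}, when $H$ is a solvable definable group (or more generally along the relevant steps) the terms of these series are definable and definably connected. I would argue by induction on the length of the series, applying Lemma \ref{LAcomm} to commutators $[H_i,H_j]$ to get the inclusion $[\h_i,\h_j]\subseteq\Lie([H_i,H_j])$ in one direction. The crucial point is to upgrade these inclusions to equalities $\Lie([H_1,H_2])=[\h_1,\h_2]$ for definably connected subgroups; once one has that the Lie functor takes each term of the lower central series of $H$ exactly to the corresponding term of the lower central series of $\h$, nilpotency of one is immediately equivalent to nilpotency of the other, since a definably connected group is trivial iff its Lie algebra is trivial (its dimension is the dimension of the Lie algebra). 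The same scheme with the derived series yields $(2)$.

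For $(3)$, I would unwind the definition of Carter subgroup: $H$ is a definably connected nilpotent subgroup of finite index in its normalizer $N_G(H)$. Using part $(1)$, nilpotency of $H$ is equivalent to nilpotency of $\h$. The finite-index condition should translate, via the Lie algebra, into the equation $\h=\n_\g(\h)$, which is precisely the Cartan-subalgebra condition combined with nilpotency. The bridge is Lemma \ref{normycen}, which gives $N_G(H)=N_G(\h)$, together with Remark \ref{norcen}, which identifies $\Lie(N_G(\h))=\n_\g(\h)$. Since $H$ is definably connected and $H\le N_G(H)$, the subgroup $H$ has finite index in $N_G(H)$ exactly when $\dim N_G(H)=\dim H$, i.e. when $\n_\g(\h)=\h$ (using $\h\subseteq\n_\g(\h)$, which holds because $H$ normalizes itself so $\h\subseteq\Lie(N_G(H))=\n_\g(\h)$). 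Combining $\h$ nilpotent with $\h=\n_\g(\h)$ is by definition the assertion that $\h$ is a Cartan subalgebra of $\g$.

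The main obstacle I anticipate is the direction-by-direction bookkeeping needed to make the series arguments into clean equalities rather than mere inclusions. Lemma \ref{LAcomm} only yields $[\h_1,\h_2]\subseteq\Lie[H_1,H_2]$; to close the loop for nilpotency I need the reverse inclusion, which requires knowing that the definably connected subgroup $[H_1,H_2]$ has Lie algebra no larger than $[\h_1,\h_2]$. I expect this to follow by a dimension count: the group $[H_1,H_2]$ is generated by commutators $[\alpha(t),\beta(s)]$, whose tangent directions at the identity are exactly the brackets $[\tg\alpha,\tg\beta]$ computed in Lemma \ref{LAcomm}, so the tangent space of the generated subgroup cannot exceed the span of these brackets. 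Care is needed because commutators generate the subgroup only as a group, not infinitesimally in an obvious way, so one must invoke the fact that the Lie algebra of a group generated by definable one-parameter families is spanned by the corresponding tangent vectors. Once that equality is in hand, parts $(1)$ and $(2)$ are routine inductions, and part $(3)$ follows from the normalizer identity as above; the finite-index-versus-equal-dimension equivalence for definably connected subgroups is the one remaining point to state carefully.
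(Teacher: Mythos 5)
Your part (3) and the left-to-right halves of (1) and (2) are exactly the paper's argument: Lemma \ref{LAcomm} gives $\mathcal{C}^m\h\subseteq\Lie(\mathcal{C}^m H)$ (resp.\ the analogue for the derived series), which is all one needs for ``$H$ nilpotent/solvable $\Rightarrow$ $\h$ nilpotent/solvable'', and your Carter/Cartan equivalence via $N_G(H)=N_G(\h)$, $\Lie(N_G(\h))=\n_\g(\h)$ and the finite-index-equals-equal-dimension observation is the paper's proof of (3). The gap is in the converse halves of (1) and (2), where you rely on upgrading the inclusion to the equality $\Lie([H_1,H_2])=[\h_1,\h_2]$ for definably connected subgroups. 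That equality is false in this generality: in $\mathrm{SL}(2,R)$ take $H_1$, $H_2$ to be the upper and lower unitriangular subgroups; then $[\h_1,\h_2]$ is the one-dimensional diagonal subalgebra, while the abstract commutator group $[H_1,H_2]$ is normal in $\langle H_1,H_2\rangle=\mathrm{SL}(2,R)$ and non-central, hence is all of $\mathrm{SL}(2,R)$. Your proposed justification fails precisely at the point you flagged: the Lie algebra of a generated subgroup is closed under brackets, whereas the span $[\h_1,\h_2]$ need not be a subalgebra, so the tangent space can genuinely exceed it; and the ``fact'' that a group generated by definable curves has Lie algebra spanned by their tangent vectors is a form of the analytic subgroup theorem, which rests on the exponential map that the paper explicitly lacks. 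There is a second, independent obstruction: to speak of $\Lie(\mathcal{C}^m H)$ at all you need the series terms to be definable, and Fact \ref{comm} grants this only for \emph{solvable} $H$ --- which is not yet known in the converse of (1) and is the very conclusion sought in the converse of (2), so the series argument cannot even start in those directions.

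The paper avoids all of this by proving the converses by induction on $\dim H$, never needing the reverse inclusion. For (1): if $\h$ is nilpotent then $\z(\h)\neq 0$, and since $\z(\h)=\Lie(Z(H))$ (Remark \ref{norcen} and Lemma \ref{normycen} applied inside $H$), the centre $Z(H)$ must be infinite; hence $\dim H/Z(H)<\dim H$, and $\Lie(H/Z(H))=\h/\z(\h)$ is nilpotent, so induction gives $H/Z(H)$, and then $H$, nilpotent. For (2): $\h$ solvable forces $H$ not semisimple (a solvable semisimple algebra is trivial), so $H$ has a nontrivial definably connected abelian normal subgroup $N$, and the same dimension induction applied to $H/N$ concludes. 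If you replace your series-equality step by this centre/abelian-normal-subgroup induction, the rest of your proposal goes through unchanged.
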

\begin{proof}
Let $H$ be nontrivial. $(1)$ Suppose first that $H$ is nilpotent.
Let $\mathcal{C}^{1}\mathfrak{h}$, $\mathcal{C}^{2}\mathfrak{h}$, $\dots$
be the lower central series of $\mathfrak{h}$. Let $\mathcal{C}^{1}H, \mathcal{C}^{2}H,\dots$
be the lower central series of $H$, which are all definable by Fact \ref{comm}
since $H$ is solvable. Then, by Lemma\,\ref{LAcomm}, $\mathcal{C}^{1}\mathfrak{h}:=[\mathfrak{h,\mathfrak{h}}]\subseteq \Lie([H,H])=\Lie(\mathcal{C}^{1}H)$
and by induction $\mathcal{C}^{m}\mathfrak{h}\subseteq \Lie(\mathcal{C}^{m}H)$.
Hence $\mathcal{C}^{m}\mathfrak{\mathfrak{h}}=0$ for some $m$. Now suppose
$\mathfrak{h}$ is nilpotent and we prove by induction on the dimension
of $H$ that $H$ is nilpotent. Suppose $\dim H>1$ and let $Z(H)$
be its centre. If $Z(H)$ is finite $\mathfrak{z}(\mathfrak{h})=\Lie(Z(H))$
is trivial, which is impossible since $\mathfrak{h}$ is nilpotent. Hence $\dim H/Z(H)<\dim H$,
and since $\mathfrak{h}/\mathfrak{z}(\mathfrak{h})=\Lie(H/Z(H))$ is nilpotent
so is $H/Z(H)$, thus $H$ is nilpotent.

(2) The left to right implication is similar to case $(1).$ Suppose
$\mathfrak{h}$ is solvable. $H$ cannot be semisimple since $\mathfrak{h}$
semisimple would imply $\mathfrak{h}$ (and hence $H$) trivial.  Thus, there is 
 a definably connected nontrivial abelian normal subgroup $N$ of $H.$
Hence reasoning as above we get $H/N$ solvable and so $H$ solvable,
as required.

(3) By (1) and the fact that $\mathfrak{n}_{\mathfrak{g}}(\mathfrak{h})/\mathfrak{h}=\Lie(N_{G}(H))/\h=\Lie\left(N_{G}(H)/H\right).$
\end{proof}
\begin{cor}\label{equaldim}
Let $G$ be a definable group. Then, all the Cartan subgroups of $G$
have the same dimension.
\end{cor}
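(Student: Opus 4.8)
The plan is to transfer the problem from Cartan subgroups of $G$ to Cartan subalgebras of $\g$, where equidimensionality is a classical fact. Fix a Cartan subgroup $H$ of $G$. Since its connected component $H^o$ has finite index in $H$, we have $\dim H = \dim H^o$, and the Lie algebra $\mathfrak{h}:=\Lie(H)=\Lie(H^o)$ satisfies $\dim_R\mathfrak{h}=\dim H$. Thus it suffices to show that $\dim_R\mathfrak{h}$ does not depend on the choice of $H$.

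First I would identify the relevant subalgebra. By Remark \ref{Carter-Cartan} the connected component $H^o$ of the Cartan subgroup $H$ is a Carter subgroup of $G$, and $H^o$ is definably connected. Since $\Lie(G)=\g$ (as $G^o$ is open of finite index in $G$), Proposition \ref{defgpLA}(3) applies and gives that $\mathfrak{h}=\Lie(H^o)$ is a Cartan subalgebra of $\g$. Hence every Cartan subgroup $H$ of $G$ produces, via $H\mapsto H^o\mapsto \Lie(H^o)$, a Cartan subalgebra of $\g$ whose $R$-dimension equals $\dim H$.

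It then remains to prove that all Cartan subalgebras of $\g$ have the same $R$-dimension. Here I would pass to the complexification: by the facts recalled in the preliminary section, if $\mathfrak{h}$ is a Cartan subalgebra of $\g$ then $\mathfrak{h}_K$ is a Cartan subalgebra of $\g_K$, and clearly $\dim_R\mathfrak{h}=\dim_K\mathfrak{h}_K$. Over the algebraically closed field $K=R[i]$ of characteristic zero, the classical theory (\cite[Ch.\,VII]{BourbLALG7-9}) gives that any two Cartan subalgebras of $\g_K$ are conjugate under the group of elementary automorphisms, so in particular they all have the same $K$-dimension, namely the rank of $\g_K$. Combining the identities, for any Cartan subgroup $H$ of $G$ we obtain $\dim H=\dim_R\mathfrak{h}=\dim_K\mathfrak{h}_K=\rk\,\g_K$, a quantity independent of $H$, which proves the corollary.

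The argument is short because the real work has already been done in Proposition \ref{defgpLA} and in the preliminary facts about complexification; the only point requiring care is the bookkeeping that lets one replace $H$ by $H^o$ (so that the Carter/Cartan-subalgebra correspondence, which is stated only for definably connected subgroups, can be invoked) and the verification that $\dim H=\dim_R\Lie(H^o)$. I do not expect a genuine obstacle, since the equidimensionality of Cartan subalgebras over a characteristic-zero field is standard; the complexification step is included only to reduce to the algebraically closed case where conjugacy, hence equal dimension, is immediate.
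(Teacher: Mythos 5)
Your proof is correct and follows essentially the same route as the paper: pass from the Cartan subgroup $H$ to the Carter subgroup $H^o$ via Remark~\ref{Carter-Cartan}, apply Proposition~\ref{defgpLA}(3) to see that $\Lie(H^o)$ is a Cartan subalgebra of $\g$, and conclude by the equidimensionality of Cartan subalgebras in characteristic zero. Your complexification-and-conjugacy detour is a harmless (if unnecessary) expansion of the paper's one-line appeal to the characteristic-zero fact, which already holds over any field of characteristic zero without passing to $K$.
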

\begin{proof}
Let $H_{1}$, $H_{2}$ Cartan subgroups of $G$. Then $H_{1}^o$,
$H_{2}^{o}$ are Carter subgroups of $G$ by Remark\,\ref{Carter-Cartan}, and so their Lie algebras
are Cartan subalgebras of the Lie algebra of $G$ and therefore (since
the base field is of zero characteristic) of the same dimension
.
\end{proof}

The last corollary  answers positively  -- for o-minimal expansions of real closed fields -- a question left open in \cite{BJO14Cartan}. To  answer this and other questions of  \cite{BJO14Cartan} in their full generality  it will be convenient  to consider definable linear groups. The latter were first studied in \cite{PPS02Lin}.

Let $G$  be a  definable subgroup of $\GL(n,F)$. Let $\zar{G}{F}$ be  the Zariski closure of $G$ in $\GL(n,F)$. Then   $\zar{G}{F}$ is a definable group; it  is  the smallest algebraic subgroup of $\GL(n,F)$ that contains $G$. {Moreover, if $G\leq\GL(n,R)$ is definable then $\zar{G}{K}$ is defined over $R$ and $\zar{G}{K}\cap 
\GL(n,R)=\zar{G}{R}$.} A Lie subalgebra  of $\gl(n,F)$  is said to be \emph{algebraic}  if it is the Lie algebra of an algebraic subgroup of $\GL(n,F)$. Given  a Lie subalgebra $\g$ of $\gl(n,F)$, $a(\g)$ denotes the  minimal algebraic Lie subalgebra of $\gl(n,F)$ containing $\g$. {We recall that if $\mathfrak{a}$ is a subalgebra of $\gl(n,F)$ and $U$ and $V$
are linear subspaces of $\gl(n,F)$ such that $U\subseteq V$, then $\left[\mathfrak{a},V\right]\subseteq U$
implies $\left[a(\mathfrak{a}),V\right]\subseteq U$ (see \cite[Ch.VIII.3 p.\,112]{Hochschild81AlgGroups}). Also note that if $H$ is an algebraic subgroup of $\GL(n,R)$ then the Lie algebra of $\zar{H}{K}$ in $\GL(n,K)$ is $\h_K:=\h  \otimes K$.}

The first two conclusions  of the following proposition are proved in \cite[Ch.VI \S\,5 Lem.\,2]{Chevalley55AlgLie} for analytic linear groups.

\begin{prop}\label{zarcl} Let $G$ be  a definable subgroup of $\GL(n,F)$ and $\zar{G}{}$ its Zariski closure. Then,  $\Lie(\zar{G}{})=a(\Lie(G))$. Moreover, if $G$ is definably connected then  $\zar{G}{}$ is irreducible and $G$ is normal in $\zar{G}{}$. \end{prop}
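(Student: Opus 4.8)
The plan is to establish the three assertions in the order: the inclusion $a(\Lie(G))\subseteq\Lie(\zar{G}{})$ (valid for all $G$), then irreducibility and normality for definably connected $G$, and finally the reverse inclusion $\Lie(\zar{G}{})\subseteq a(\Lie(G))$ for connected $G$, reducing the general Lie-algebra identity to the connected case at the very end. The easy inclusion is immediate: since $G\subseteq\zar{G}{}$ we have $\g:=\Lie(G)\subseteq\Lie(\zar{G}{})$, and as $\zar{G}{}$ is algebraic its Lie algebra is an algebraic Lie subalgebra of $\gl(n,F)$ containing $\g$; minimality of $a(\g)$ then gives $a(\g)\subseteq\Lie(\zar{G}{})$.

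Assume now $G$ is definably connected. For \emph{irreducibility} I would use that the component group of an algebraic group is finite: letting $(\zar{G}{})^{o}$ be the algebraic identity component (a definable normal subgroup of finite index), the restriction to $G$ of the quotient map $\zar{G}{}\to\zar{G}{}/(\zar{G}{})^{o}$ is a definable homomorphism into a finite group, whose kernel has finite index in $G$; by definable connectedness that kernel is all of $G$, so $G\subseteq(\zar{G}{})^{o}$, and since the latter is Zariski closed we conclude $\zar{G}{}=(\zar{G}{})^{o}$ is irreducible. For \emph{normality} I would exploit that preserving $\g$ is an algebraic condition: every $g\in G$ satisfies $\Ad(g)(\g)=\g$ (the differential of $\Int(g)$, which preserves $G$), so $G$ lies in the algebraic subgroup $N:=\{g\in\GL(n,F):\Ad(g)(\g)\subseteq\g\}$; hence $\zar{G}{}\subseteq N\subseteq N_{\GL(n,F)}(\g)$. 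As $G$ is a definably connected subgroup of $\zar{G}{}$, Lemma \ref{normycen} gives $N_{\zar{G}{}}(G)=N_{\zar{G}{}}(\g)=\zar{G}{}$, that is $G\trianglelefteq\zar{G}{}$.

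The core of the argument is the reverse inclusion $\Lie(\zar{G}{})\subseteq a(\g)$ for connected $G$, where the missing exponential map is replaced by Zariski density. Write $\mathfrak{q}:=\Lie(\zar{G}{})$. Normality of $G$ gives $[\mathfrak{q},\g]\subseteq\g$, so the quoted fact of \cite[Ch.VIII.3 p.\,112]{Hochschild81AlgGroups}, applied with $\mathfrak{a}=\g$, $U=\g$, $V=\mathfrak{q}$, yields $[\mathfrak{q},a(\g)]\subseteq\g\subseteq a(\g)$; thus $a(\g)$ is an ideal of $\mathfrak{q}$. Let $A$ be the connected algebraic subgroup with $\Lie(A)=a(\g)$; since $a(\g)\subseteq\mathfrak{q}$ we have $A\subseteq\zar{G}{}$, and since $a(\g)$ is an ideal and $\zar{G}{}$ is connected, $A\trianglelefteq\zar{G}{}$ (via Lemma \ref{normycen} and Remark \ref{norcen}). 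Then $Q:=\zar{G}{}/A$ is a connected algebraic group with $\Lie(Q)=\mathfrak{q}/a(\g)$ and quotient map $q\colon\zar{G}{}\to Q$. The image $q(G)$ is a definable subgroup whose Lie algebra is $dq(\g)=0$, because $\g\subseteq a(\g)=\Ker dq$; hence $q(G)$ is finite. But $G$ is Zariski dense in $\zar{G}{}$, so $q(G)$ is Zariski dense in $Q$, and a finite set being Zariski closed forces $Q=q(G)$ finite; being connected, $Q$ is trivial. Therefore $\zar{G}{}=A$ and $\mathfrak{q}=a(\g)$. Finally, for arbitrary $G$ the identity follows from the connected case: $\zar{G}{}$ is a finite union of cosets of $\zar{G^{o}}{}$, so $\zar{G^{o}}{}$ is the identity component of $\zar{G}{}$ and $\Lie(\zar{G}{})=\Lie(\zar{G^{o}}{})=a(\Lie(G^{o}))=a(\g)$.

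I expect the reverse inclusion to be the main obstacle: recognising $a(\g)$ as an ideal of $\Lie(\zar{G}{})$ through the Hochschild fact, and then collapsing $\zar{G}{}$ onto $A$ by the density-versus-dimension argument, is precisely the place where the classical Lie-theoretic use of $\exp$ must be bypassed. The remaining care is purely bookkeeping about fields (the same argument runs for $F=R$ and $F=K$) and about the standard char-$0$ correspondence between connected algebraic subgroups and algebraic Lie subalgebras used to produce $A$.
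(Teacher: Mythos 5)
Your argument is correct, but it reaches the main identity $\Lie(\zar{G}{})=a(\g)$ by a genuinely different route, and in the opposite logical order to the paper. The paper's proof is a short sandwich: it takes the irreducible algebraic group $G_1$ with $\Lie(G_1)=a(\g)$ (your $A$) and proves $\zar{G}{}=G_1$ outright, using that a definably connected $G$ with $\g\sub\Lie(G_1)$ satisfies $G\sub G_1$ (hence $\zar{G}{}\sub G_1$), while the easy inclusion $a(\g)\sub\Lie(\zar{G}{})$ together with irreducibility of $G_1$ gives $G_1\sub\zar{G}{}$; this delivers irreducibility and the Lie algebra identity simultaneously, and normality is deduced \emph{afterwards} by exactly your normalizer argument ($N_{\zar{G}{}}(\g)$ is algebraic and contains $G$, then Lemma \ref{normycen}). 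You invert this: irreducibility independently via the component group (fine), normality next, and then normality is fed into the Hochschild fact to make $a(\g)$ an ideal of $\Lie(\zar{G}{})$, after which $\zar{G}{}/A$ is collapsed by a density--dimension argument. What your route buys is independence from the lemma the paper leans on tacitly (``definably connected plus $\g\sub\Lie(G_1)$ implies $G\sub G_1$''), though you still invoke its standard characteristic-$0$ algebraic analogue to get $A\sub\zar{G}{}$; what it costs is the quotient machinery, and there is one point needing care when $F=R$: the algebraic quotient map $q\colon\zar{G}{}\to Q$ need not be surjective on $R$-points, so ``$q(G)$ Zariski dense in $Q$'' requires interpretation (pass to $K$-points, or use the definable quotient, whose Lie algebra is the expected one by Remark \ref{norcen}). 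The cleanest repair avoids $Q$ altogether: once $q(G)$ is finite (equivalently, $G\cap A$ has Lie algebra $\g\cap a(\g)=\g$, so $G/(G\cap A)$ has dimension $0$), $G$ lies in finitely many Zariski-closed cosets $g_iA$, hence the irreducible $\zar{G}{}$ lies in a single such coset, which contains $e$; thus $\zar{G}{}\sub A$ and, combined with $A\sub\zar{G}{}$, equality holds. Your final reduction of the Lie algebra identity to the connected case via $\zar{G}{}=\bigcup_i g_i\zar{G^o}{}$ is the same as the paper's.
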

 \proof
Suppose first that $G$ is definably connected. 
Let  $G_1$ be an algebraic irreducible subgroup of $\GL(n,F)$ such that $a(\g)=\Lie(G_1)$. Since $\g\sub a(\g)$ and $G$ is definably connected we have $G\sub G_1$ and so $\zar{G}{}\sub G_1$. On the other hand 
$\g\sub\Lie(\zar{G}{})$  and the latter being algebraic imply $a(\g)\sub\Lie(\zar{G}{})$, and since $G_1$ is irreducible we have $G_1\sub \zar{G}{}$. Therefore, $\zar{G}{}=G_1$ and  hence $\zar{G}{}$ irreducible and $\Lie(\zar{G}{})=a(\g)$, as required.  Next we prove that $G$ is normal in $\zar{G}{}$. Note first that $N_{\zar{G}{}}(\g)$ is algebraic and contains $G$, hence $\zar{G}{}=N_{\zar{G}{}}(\g)$. Since $G$ is definably connected, by Lemma\,\ref{normycen} we have $N_{\zar{G}{}}(\g)=N_{\zar{G}{}}(G)$.

For the general case, it suffices to note that $G$ -- being definable -- is a \emph{finite} union of translate of its  definably connected component $G^o$,  and so $\zar{G}{}$ is a finite union of translates of $\zar{G^o}{}$ and so $\Lie(\zar{G}{})=\Lie(\zar{G^o}{})$.
 \endproof
 
 As we have already mentioned above, all the Cartan subalgebras of a Lie algebra over a field of characteristic 0 have the same dimension, called the \emph{rank} of the Lie algebra and denoted by $\rk(-)$.
 
 \begin{cor}\label{rka(g)}  Let $G$ be  a definably connected subgroup of $\GL(n,F)$ and  $G_1$ its Zariski closure in  $\GL(n,F)$. Then,  $\rk\,\g_1=\rk\,\g+\dim\g_1-\dim\g$.
 \end{cor}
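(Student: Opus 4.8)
The plan is to produce a Cartan subalgebra $\h_1$ of $\g_1$ that meets $\g$ in a Cartan subalgebra of $\g$ and that, together with $\g$, spans all of $\g_1$; the identity will then fall out of a single dimension count. By Proposition \ref{zarcl} we have $\g_1=\Lie(\zar{G}{})=a(\g)$, and since $G$ is definably connected, $G$ is normal in $\zar{G}{}$, so by Remark \ref{norcen} its Lie algebra $\g$ is an ideal of $\g_1$. Throughout I would use the standard facts on Cartan subalgebras over a field of characteristic zero (they hold over $F$; if preferred one extends scalars to the algebraically closed field $K$, under which both the rank and the relevant dimensions are unchanged, as complexification carries Cartan subalgebras to Cartan subalgebras).

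The key point --- and the only place where algebraicity really enters --- is the containment $[\g_1,\g_1]\subseteq\g$. Since $\g$ is an ideal we have $[\g,\g_1]\subseteq\g$, and applying the replica lemma of Hochschild recalled above (\cite{Hochschild81AlgGroups}, with $\mathfrak a=\g$, $U=\g$ and $V=\g_1$, so that $U\subseteq V$ and $[\mathfrak a,V]\subseteq U$) one obtains $[a(\g),\g_1]=[\g_1,\g_1]\subseteq\g$. Hence the quotient $\g_1/\g$ is abelian.

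Now I would fix a Cartan subalgebra $\h_1$ of $\g_1$, so $\dim\h_1=\rk\,\g_1$. Because $\g$ is an ideal of $\g_1$, the intersection $\h_1\cap\g$ is a Cartan subalgebra of $\g$ (\cite{BourbLALG7-9}), whence $\dim(\h_1\cap\g)=\rk\,\g$. On the other hand, let $\pi:\g_1\to\g_1/\g$ be the projection. The image of a Cartan subalgebra under a surjective homomorphism is again a Cartan subalgebra, so $\pi(\h_1)$ is a Cartan subalgebra of $\g_1/\g$; as the latter is abelian, hence nilpotent, it is its own unique Cartan subalgebra, giving $\pi(\h_1)=\g_1/\g$, i.e. $\h_1+\g=\g_1$. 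A Grassmann dimension count then yields
\[
\dim\g_1=\dim(\h_1+\g)=\dim\h_1+\dim\g-\dim(\h_1\cap\g)=\rk\,\g_1+\dim\g-\rk\,\g,
\]
which rearranges exactly to $\rk\,\g_1=\rk\,\g+\dim\g_1-\dim\g$.

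The main obstacle is the commutator containment $[\g_1,\g_1]\subseteq\g$: intuitively it says that passing from $G$ to its Zariski closure enlarges the Lie algebra only in ``toral'' directions, which are absorbed into the Cartan subalgebra and create no new root spaces --- precisely what forces the rank to increase by the full amount $\dim\g_1-\dim\g$. Everything else is routine Cartan subalgebra bookkeeping. As a sanity check on one inequality, for a regular $X\in\g$ the operator $\ad_{\g_1}(X)$ acts as zero on $\g_1/\g$ by the containment above, so additivity of the generalized $0$-eigenspace along $0\to\g\to\g_1\to\g_1/\g\to 0$ gives $\dim\g_1^0(\ad_{\g_1}X)=\dim\g^0(\ad X)+\dim\g_1-\dim\g=\rk\,\g+\dim\g_1-\dim\g$, yielding $\rk\,\g_1\le\rk\,\g+\dim\g_1-\dim\g$ immediately.
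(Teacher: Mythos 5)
Your argument breaks down at the step ``because $\g$ is an ideal of $\g_1$, the intersection $\h_1\cap\g$ is a Cartan subalgebra of $\g$''. That claim is false in general (and is not what \cite{BourbLALG7-9} proves): take $\g_1$ the two-dimensional non-abelian Lie algebra with basis $x,y$ and $[x,y]=y$, let $\g=\langle y\rangle$ and $\h_1=\langle x\rangle$. Then $\h_1$ is a Cartan subalgebra of $\g_1$ and $\g$ is an ideal, but $\h_1\cap\g=0$ is not a Cartan subalgebra of $\g$ (the zero subalgebra is not self-normalizing in $\g$). Worse, this configuration satisfies everything your proof actually uses: $\g$ is an ideal and $[\g_1,\g_1]=\g$, so $\g_1/\g$ is abelian --- and yet the conclusion of the corollary fails there, since $\rk\,\g_1=1$ while $\rk\,\g+\dim\g_1-\dim\g=2$. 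So no argument can succeed from the two facts ``$\g$ is an ideal'' and ``$\g_1/\g$ is abelian'' alone: the commutator containment $[\g_1,\g_1]\sub\g$, which you single out as the only place where algebraicity enters, does not capture the minimality of $\g_1=a(\g)$. (Of course the counterexample cannot arise with $\g_1=a(\g)$: there $a(\langle y\rangle)=\langle y\rangle$ because $y$ is nilpotent.)

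What minimality really buys --- and what your proposal is missing --- is Chevalley's theory of replicas: modulo $\g$, the algebraic hull $a(\g)$ is built from replicas of (semisimple parts of) elements of $\g$, whose eigenvalues are tied to the root-space decomposition determined by $\g$ itself; it is this, not merely the abelianness of $\g_1/\g$, that forces a Cartan subalgebra of $\g_1$ to meet $\g$ in a Cartan subalgebra of $\g$. This is precisely the content of \cite[Ch.VI \S\,4 Prop.\,21]{Chevalley55AlgLie}, which gives the intersection statement, the statement that $a(\h)$ is Cartan in $\g_1$ for $\h$ Cartan in $\g$, and the rank formula all at once; the paper's proof consists exactly of identifying $\g_1=a(\g)$ via Proposition \ref{zarcl} and invoking that proposition. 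For what it is worth, your closing ``sanity check'' is the sound half of the argument: since $\g$ is an ideal, $\ad_{\g_1}(X)$ induces $0$ on $\g_1/\g$ for $X\in\g$, and additivity of generalized $0$-eigenspaces gives $\rk\,\g_1\le\rk\,\g+\dim\g_1-\dim\g$; the counterexample above shows this inequality can be strict, so the reverse inequality is exactly where the replica theory is indispensable.
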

 \proof By  Proposition\,\ref{zarcl} $\g_1=a(\g)$.   So we have   that $\g$ is a subalgebra of $\gl(n,F)$, $F$ is a characteristic 0 field and $\g_1$ is the minimal algebraic Lie algebra containing $\g$. Hence, we are exactly under the hypothesis of  \cite[Ch.VI \S\,4 Prop.\,21]{Chevalley55AlgLie}, where  the required equality is concluded.
 \endproof
 
 Next, we adapt the proof of \cite[Ch.VI \S\,5 Prop.\,1]{Chevalley55AlgLie}  for analytic linear groups to our context which gives us the relationship between the Cartan subgroups of a definable linear group and those of its Zariski closure.
 
 \begin{prop}\label{chev} Let $G$ be  a definably connected  subgroup of $\GL(n,F)$ and $\zar{G}{}$ its Zariski closure. Then, the Cartan subgroups of $G$ are the intersection with $G$ of the Cartan subgroups of $\zar{G}{}$. 
 \end{prop}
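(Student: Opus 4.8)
The plan is to prove a set-theoretic equality by showing both inclusions, using the structural results already established for definable linear groups and their Zariski closures. Let me denote by $\zar{G}{}$ the Zariski closure of $G$ in $\GL(n,F)$. By Proposition \ref{zarcl}, since $G$ is definably connected, $\zar{G}{}$ is irreducible, $G$ is normal in $\zar{G}{}$, and $\Lie(\zar{G}{}) = a(\g)$. These facts will be the backbone of the argument. The strategy is to transfer the defining properties of Cartan subgroups (maximal nilpotency together with the finite-index-normalizer condition) back and forth between $G$ and $\zar{G}{}$ via intersection, leaning on the fact that a Cartan subgroup's definably connected component is a Carter subgroup (Remark \ref{Carter-Cartan}) and on the Lie-algebraic characterization of nilpotency and Carter subgroups in Proposition \ref{defgpLA}.

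First I would take a Cartan subgroup $H$ of $\zar{G}{}$ and show that $H \cap G$ is a Cartan subgroup of $G$. The key is to analyze the Carter subgroup $Q := (H\cap G)^o$ at the Lie-algebra level. Using that $G$ is normal in $\zar{G}{}$, the Lie algebra $\g$ is an ideal of $\g_1 := \Lie(\zar{G}{})$, so the Cartan subalgebra $\Lie(H^o)$ of $\g_1$ intersected with $\g$ should yield a Cartan subalgebra of $\g$; this is the Lie-algebra analogue of the proposition and is where I expect to invoke the corresponding classical fact (from \cite{Chevalley55AlgLie} or \cite{BourbLALG7-9}) that for an ideal $\g \trianglelefteq \g_1$, the intersection of a Cartan subalgebra of $\g_1$ with $\g$ is a Cartan subalgebra of $\g$. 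Combining this with Proposition \ref{defgpLA}(3) would show that $(H \cap G)^o$ is a Carter subgroup of $G$, and then Remark \ref{Carter-Cartan} promotes it: $H\cap G$ contains the Carter subgroup $(H\cap G)^o$, and I must check that $H \cap G$ is itself the (unique) Cartan subgroup of $G$ containing it, i.e. that $H \cap G = C_G(Q)Q$ where $Q = (H\cap G)^o$.

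Conversely, I would start from a Cartan subgroup $H'$ of $G$, whose component $Q' = (H')^o$ is a Carter subgroup of $G$ with Cartan subalgebra $\q' = \Lie(Q')$. I would take a Cartan subalgebra $\h_1$ of $\g_1$ containing $\q'$ (or extend $\q'$ appropriately using the ideal structure), realize it as the Lie algebra of a Carter subgroup of $\zar{G}{}$, and let $H$ be the Cartan subgroup of $\zar{G}{}$ containing that Carter subgroup. The task is then to verify $H \cap G = H'$, matching the two sides by their connected components (via Fact \ref{Cartan1}(2), equal components force equal Cartan subgroups) and the centralizer formula $H = C_{\zar{G}{}}(H^o)H^o$. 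The normality of $G$ in $\zar{G}{}$ is crucial here: it guarantees that conjugation by $\zar{G}{}$ preserves $G$, so centralizers and normalizers computed in $\zar{G}{}$ intersect $G$ in the right way, and that the dimension bookkeeping from Corollary \ref{rka(g)} is consistent.

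The hard part will be the Lie-algebra intersection step: establishing cleanly that a Cartan subalgebra of $\g_1$ meets the ideal $\g$ in a Cartan subalgebra of $\g$, and then tracking exactly how this controls the \emph{full} (possibly disconnected) Cartan subgroups rather than just their components. Subtleties arise because $H \cap G$ need not be connected even when $H$ is, and because the correspondence between Carter subgroups is governed by $\Lie$ only up to finite index; I would therefore need to argue carefully using $H = C_{\zar{G}{}}(Q)Q$ together with $Z_{\zar{G}{}}(Q) \cap G = Z_G(Q)$ (a consequence of the normality of $G$ and Lemma \ref{normycen}) to pin down the component groups and conclude the exact set equality. Establishing that this intersection construction is a bijection between the two families of Cartan subgroups — not merely that each intersection is a Cartan subgroup — is the delicate bookkeeping I expect to consume most of the proof.
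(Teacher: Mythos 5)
There is a genuine gap, and it sits precisely at the step you call the backbone of the forward direction. The ``classical fact'' you plan to invoke --- that for an ideal $\g$ of $\g_{1}$ the intersection of a Cartan subalgebra of $\g_{1}$ with $\g$ is a Cartan subalgebra of $\g$ --- is false. Take $\g_{1}$ the two-dimensional solvable algebra with basis $x,y$ and $[x,y]=y$, and let $\g=Fy$: then $\h_{1}=Fx$ is a Cartan subalgebra of $\g_{1}$ (nilpotent and self-normalizing, since $[ax+by,x]=-by$), $\g$ is an ideal, but $\h_{1}\cap\g=0$ is not a Cartan subalgebra of $\g$, whose only Cartan subalgebra is $\g$ itself. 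So normality of $G$ in $\zar{G}{}$ is not the operative hypothesis --- and indeed the paper's proof of this proposition never uses it. What makes the intersection statement true here is the hull relation $\Lie(\zar{G}{})=a(\g)$ from Proposition \ref{zarcl}, via \cite[Ch.VI \S\,4 Prop.\,21]{Chevalley55AlgLie}: for $\g\sub\gl(n,F)$ with $\g_{1}=a(\g)$, the maps $\h\mapsto a(\h)$ and $\h_{1}\mapsto\h_{1}\cap\g$ exchange Cartan subalgebras of $\g$ and of $\g_{1}$, with $a(\h_{1}\cap\g)=\h_{1}$. (The counterexample is consistent with this: there $a(Fy)=Fy\neq\g_{1}$.) The same defect infects your converse direction: a Cartan subalgebra of an ideal need not be contained in \emph{any} Cartan subalgebra of the ambient algebra --- in the example above $Fy$ meets no Cartan subalgebra of $\g_{1}$ --- so ``take a Cartan subalgebra $\h_{1}$ of $\g_{1}$ containing $\mathfrak{q}$'' is unjustified as stated; the correct move, again by Prop.\,21, is to set $\h_{1}:=a(\mathfrak{q})$ and then realize it as $\Lie(H')$ for a Cartan subgroup $H'$ of $\zar{G}{}$ using Chevalley's Prop.\,5.

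The second substantive gap is the group-level maximality, which you acknowledge but do not supply. The paper's mechanism is: Cartan subgroups $H'$ of the irreducible algebraic group $\zar{G}{}$ are \emph{algebraic and irreducible} (Chevalley Prop.\,5 and Thm.\,2 --- facts you never invoke, yet they are what allow comparing $H'$ with Zariski closures through Lie algebras). Given nilpotent $Q$ with $H'\cap G\leq Q\leq G$, one passes to $\zar{Q}{}$ (still nilpotent), shows $\h'=a(\h'\cap\g)\sub\Lie(\zar{Q}{})$, deduces $H'\sub\zar{Q}{}$ by irreducibility, hence $H'=\zar{Q}{}$ by maximal nilpotency of $H'$, and so $Q\sub H'\cap G$; symmetrically, in the converse one gets $H'=\zar{H}{}$ and concludes $H=H'\cap G$ from maximal nilpotency of $H$. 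Your alternative route via $H'\cap G=C_{G}(Q)Q$ can actually be completed (the inclusion $C_{G}(Q)\sub H'$ follows because $\Lie(Z_{\zar{G}{}}(g))$ is an algebraic Lie algebra, hence contains $a(\h'\cap\g)=\h'$ once it contains $\h'\cap\g$), but note this again requires the identity $a(\h'\cap\g)=\h'$, i.e.\ exactly the hull fact your false ideal-fact was meant to replace. Two smaller points: $Z_{\zar{G}{}}(Q)\cap G=Z_{G}(Q)$ holds by definition for any $Q\leq G$, with no normality needed; and the proposition asserts only that the intersections $H'\cap G$ exhaust the Cartan subgroups of $G$, not that $H'\mapsto H'\cap G$ is a bijection, so that final piece of bookkeeping you anticipate is not required.
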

 \proof   Let   $\g'=\Lie(\zar{G}{})$. Firstly, note that by  Proposition\,\ref{zarcl} $\zar{G}{}$  is an irreducible algebraic subgroup of $\GL(n,F)$  and so we are under the hypothesis of $n^o$\,2 of  \cite[Ch.VI \S\,4]{Chevalley55AlgLie}.
 
 Let $H'$ be a Cartan subgroup of $\zar{G}{}$. By Proposition\,\ref{defgpLA} $\h'$ is a Cartan subalgebra of $\g'$.  We first check that $\h'=a(\h'\cap\g)$ and that $\h'\cap\g$ is a Cartan subalgebra of $\g$.   By [\emph{Ibid.}\,Prop.5] $H'$ is  algebraic  and by  [\emph{Ibid.}\,Thm.2]  $H'$ is irreducible. Hence  $\h'\supseteq a(\h'\cap\g)$. For the other inclusion,  we can apply [\emph{Ibid.}\,Prop.21] and  get  first that $\h'\cap\g$ is a Cartan subalgebra of $\g$ since $\g'=a(\g)$, and hence  also  that $a(\h'\cap\g)$ is a Cartan subalgebra of $\g'$,  finally by maximality of the Cartan subalgebras we get the required equality. We must prove that $H'\cap G$ is a Cartan subgroup of $G$.  Since $\Lie(H'\cap G)=\h'\cap\g$  is a Cartan subalgebra  of $\g$, we have that $(H'\cap G)^o$ is a Carter subgroup of $G$ by Proposition\,\ref{defgpLA}, so by Remark\,\ref{Carter-Cartan} it suffices to prove that $H'\cap G$ is maximal nilpotent. Nilpotency is clear being a subgroup of the Cartan $H'$. 
 Let $H'\cap G\leq Q\leq G$ with $Q$ nilpotent. Let $Q':=\zar{Q}{}$, then $Q'$ is also nilpotent  and $\zar{(H'\cap G)^o}{}\sub Q'$. Note that $H'\sub Q'$. Indeed, since both subgroups are algebraic and $H'$ is irreducible, it suffices to prove that $\h'\sub\mathfrak q'$, and we have that $\h'=a(\h'\cap\g)=a(\Lie(H'\cap G))=a(\Lie((H'\cap G)^o))= \Lie(\zar{(H'\cap G)^o}{})\sub \Lie(Q')=\mathfrak q'$. Now, $H'$ being maximal nilpotent subgroup of $\zar{G}{}$ implies $H'=Q'$, and so $Q\sub Q'\cap G=H'\cap G$. Therefore, $H'\cap G$ is a Cartan subgroup of $G$.
 
 Let $H$ be a Cartan subgroup of $G$. By Proposition\,\ref{defgpLA} $\h$ is a Cartan subalgebra of $\g$. Then, the rest of the proof follows as in  \cite[Ch.VI \S\,5 Prop.1]{Chevalley55AlgLie}, here there are the details. Let $\h'=a(\h)$.  Again by  Prop.21 of \cite[Ch.VI \S\,4]{Chevalley55AlgLie}  $\h'$ is a Cartan subalgebra of $\g'$ and by  [\emph{Ibid.}\,Prop.5] $\h'=\Lie(H')$ for some Cartan subgroup $H'$ of $\zar{G}{}$. Then $H'\sub\zar{H}{}$. Indeed, since both are algebraic and $H'$ is irreducible, it suffices to check that $\h'\sub  \Lie(\zar{H}{})$. Now, $H\sub \zar{H}{}$ implies $\h\sub \Lie(\zar{H}{})$ and the latter is an algebraic Lie algebra so it also contains $a(\h)=\h'$. Then, since $H'$ is maximal nilpotent and $\zar{H}{}$ is nilpotent for $H$ is so, we have $H'=\zar{H}{}$. Hence $H\sub H'$,  and so  $H\sub H'\cap G$. Finally, we get  $H=H'\cap G$  since $H'\cap G$ is nilpotent and by maximal nilpotency of $H$.
 \endproof
 
 The next proposition  is a key fact to prove the density of the union of the Cartan subgroups in a definably connected linear group.
 
 \begin{prop}\label{Zcap} Let $G$ be a definably connected subgroup of $\GL(n,F)$.  Let $X\sub\GL(n,F)$ be  a Zariski closed set  such that $X\cap G$ has non-empty interior in $G$. Then, the Zariski closure $\zar{G}{}$ is contained in  $X$. 
 \end{prop}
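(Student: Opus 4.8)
The plan is to intersect $X$ with $\zar{G}{}$ and prove that the resulting set is all of $\zar{G}{}$. Set $Z:=X\cap\zar{G}{}$. Since both $X$ and $\zar{G}{}$ are Zariski closed in $\GL(n,F)$, so is $Z$, and showing $Z=\zar{G}{}$ is exactly the asserted inclusion $\zar{G}{}\subseteq X$. Because $G$ is definably connected, Proposition\,\ref{zarcl} tells us that $\zar{G}{}$ is irreducible; hence it suffices to prove that the Zariski closed subset $Z$ of $\zar{G}{}$ has the same dimension as $\zar{G}{}$, since a proper Zariski closed subset of an irreducible algebraic set has strictly smaller dimension.

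First I would record the two dimension estimates. By hypothesis $X\cap G$ has non-empty interior in $G$, so it contains a non-empty relatively open definable subset $U$ of $G$; as $G$ is a definable manifold of pure dimension $\dim G$, every such $U$ is full-dimensional, giving $\dim U=\dim G$. Since $U\subseteq X\cap G\subseteq Z$ we obtain $\dim Z\ge\dim G$. On the other hand $G$ is by construction Zariski dense in $\zar{G}{}$, and the o-minimal dimension of a definable set agrees with the dimension of its Zariski closure; consequently $\dim\zar{G}{}=\dim G$.

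Combining the two estimates yields $\dim Z\ge\dim G=\dim\zar{G}{}$, and since $Z\subseteq\zar{G}{}$ this forces $\dim Z=\dim\zar{G}{}$. As $Z$ is Zariski closed in the irreducible set $\zar{G}{}$ and has full dimension, irreducibility gives $Z=\zar{G}{}$, that is $\zar{G}{}\subseteq X$, as required. Here I use that for an algebraic set the o-minimal dimension detects the geometric dimension, so that the dimension of a proper closed subvariety genuinely drops.

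The step requiring the most care is the equality $\dim G=\dim\zar{G}{}$, namely that passing to the Zariski closure does not raise the dimension of a definable set. I would justify this by the standard property of Zariski closures of definable sets in o-minimal expansions of real closed fields --- equivalently, that a definable set which is Zariski dense in an irreducible algebraic variety $V$ must have dimension $\dim V$ --- which in the semialgebraic case is classical. Once this is in hand, the remainder is elementary o-minimal dimension theory together with the irreducibility of $\zar{G}{}$ supplied by Proposition\,\ref{zarcl}.
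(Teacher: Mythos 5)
There is a genuine gap, and it sits exactly where you flagged it: the equality $\dim G=\dim\zar{G}{}$ is \emph{false} for definable groups in a general o-minimal expansion of a real closed field. It holds in the semialgebraic case, as you say, but the proposition is stated for arbitrary definable subgroups of $\GL(n,F)$, and there the Zariski closure can strictly raise dimension. A standard counterexample lives in $\mathbb{R}_{\exp}$: the one-parameter group $G=\{\mathrm{diag}(e^{t},e^{\pi t}):t\in\R\}\leq\GL(2,\R)$ is definable, definably connected and one-dimensional, but since $\pi$ is irrational no nontrivial monomial relation holds on $G$, so $\zar{G}{}$ is the full two-dimensional diagonal torus. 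The paper is explicitly aware of this phenomenon: Proposition\,\ref{zarcl} gives $\Lie(\zar{G}{})=a(\g)$, where the algebraic hull $a(\g)$ may be strictly larger than $\g$, and Corollary\,\ref{rka(g)} carries the correction term $\dim\g_1-\dim\g$ precisely because it can be positive. Once $\dim\zar{G}{}>\dim G$ is possible, your dimension count only yields $\dim Z\geq\dim G$, which does not force the Zariski closed set $Z=X\cap\zar{G}{}$ to be all of the irreducible variety $\zar{G}{}$; a proper subvariety of $\zar{G}{}$ could a priori still have dimension $\geq\dim G$. Ruling that out \emph{is} the whole content of the proposition, so the argument is circular at its crucial step.

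Note also that your proof never uses the group structure of $G$ beyond quoting irreducibility of $\zar{G}{}$, which is a warning sign: the statement is a group-theoretic ``identity principle'' (Remark\,\ref{identprinc}), and in the absence of analyticity the group operations are what substitute for it. The paper's proof exploits them essentially: after translating the interior point to the identity, it forms the intersection $Y$ of the Zariski closures of all small symmetric neighbourhoods of the identity (which stabilizes, $Y=\zar{U_0}{}$, by the Noetherian property), uses translation by elements of a smaller symmetric neighbourhood $U_1$ to show that the algebraic conditions $\{h:hY\sub Y\}$ and $\{x:x^{-1}\in Y\}$ contain $U_1$ and hence $Y$ itself, concluding that $Y$ is a Zariski closed subgroup; then $Y\cap G$ is a subgroup of $G$ of full dimension, so $Y\supseteq G$ by definable connectedness, forcing $Y=\zar{G}{}$, and finally $\zar{X\cap G}{}\supseteq g\zar{U_0}{}=g\zar{G}{}=\zar{G}{}$. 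To repair your write-up you would have to replace the false dimension equality with an argument of this kind; the dimension-theoretic route cannot be salvaged as stated.
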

 \proof
For $Z\sub \GL(n,F)$,  $\zar{Z}{}$ will denote  its Zariski closure. Let  $g\in \Int_G(X\cap G)$ and consider the open neighbourhood of the identity $V:=g^{-1}\Int_G(X\cap G)$. Now, for each open neighbourhood of the identity  $U\subseteq V$ consider its Zariski closure $\zar{U}{}$ in $\GL(n,F)$. 
 Let $$Y:=\bigcap_{U\sub V} \zar{U}{}.$$ Thus, $Y$ is a finite intersection and hence  $Y=\zar{U_0}{}$,  
 for some open neighbourhood of the identity  $U_0\subseteq V$, which we can assume to be symmetric, that is, $U_0^{-1}=U_0$. 

We claim that $Y$ is a subgroup of $\GL(n,F)$.  Clearly $Y\sub \zar{G}{}$. Let $U_1$ be an open symmetric neighbourhood of the identity in $G$ with $U_1U_1\subseteq U_0$. We first show that $YY=Y$. Take $h\in U_1$ and consider the algebraic subset $Z_h=\{x\in \zar{G}{}: hx\in Y \}$ of $Y$. If $x\in U_1$ then $hx\in U_1U_1\subseteq U_0\subseteq\zar{U_0}{}=Y$. That is, $U_1\subseteq Z_h$ and therefore $Y=\zar{U_0}{}\subseteq Z_h$.  Note that we have showed that the algebraic subset of $\zar{G}{}$ given by $\{h\in \zar{G}{}:hY\subseteq Y\}$ contains $U_1$,  therefore it contains its Zariski closure $\zar{U_1}{}=Y$. In particular, $YY=Y$. Now, let us see that $Y^{-1}=Y$. Take the algebraic set $W:=\{x\in \zar{G}{}: x^{-1}\in Y\}$. Since $U_1$ is symmetric we have that $U_1\subseteq W$ and therefore $\zar{U_1}{}=Y$ is also contained in $W$. Thus, $Y^{-1}\subseteq Y$, as required, what proves the claim.

Now, by the claim   $Y\cap G$ is a subgroup  of $G$, and  since it contains the open subset $U_0$, we can deduce that $\dim(Y\cap G)=\dim G$ and therefore, $Y\cap G=G$ by connectedness of $G$. That is, $G\leq Y$. In particular, $Y=\zar{G}{}$. Finally note that
$$\zar{G}{}\supseteq \zar{X\cap G}{}\supseteq\zar{gU_0}{}=g\zar{U_0}{}=g\zar{G}{}=\zar{G}{}$$
and hence  $\zar{G}{}=\zar{X\cap G}{}$,  therefore  $\zar{G}{}\sub X$.
\endproof


{
\begin{rem}\label{identprinc}The above is a rudimentary version for definable linear groups of the analytic `identity principle''. For, if $G$ denotes a connected closed subgroup  (or more generally, a connected analytic submanifold) of $\GL(n,\R)$ and $X$ denotes an analytic subset of $\GL(n,\R)$ such that $\Int_G(X\cap G)\neq \emptyset$, then clearly $G \sub X$. 
\end{rem}}

Let  $\Ctn(L)$ denote  the set of Cartan subgroups of a group $L$.

\begin{thm}\label{denselin} Let $G$ be a definably connected subgroup of $\GL(n,R)$. Then, the union of the Cartan subgroups of $G$ is dense in $G$.
\end{thm}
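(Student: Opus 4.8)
The plan is to push the whole problem onto the Zariski closure $\zar{G}{}$ and then combine the two tools just established: Proposition \ref{chev}, which identifies the Cartan subgroups of $G$ with the traces on $G$ of the Cartan subgroups of $\zar{G}{}$, and Proposition \ref{Zcap}, which forbids a proper Zariski-closed set from having non-empty interior in $G$. The only genuinely new ingredient will be an algebraic density statement about $\zar{G}{}$.

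First I would set $\zar{G}{}$ to be the Zariski closure of $G$ in $\GL(n,R)$, which by Proposition \ref{zarcl} is an irreducible algebraic group, and write
$$\bar U:=\bigcup_{C'\in\Ctn(\zar{G}{})}C' \qquad\text{and}\qquad U:=\bigcup_{C\in\Ctn(G)}C.$$
By Proposition \ref{chev} the Cartan subgroups of $G$ are exactly the sets $C'\cap G$ with $C'\in\Ctn(\zar{G}{})$, so $U=\bar U\cap G$. Since $G\subseteq\zar{G}{}$, this gives $G\setminus U=G\cap(\zar{G}{}\setminus\bar U)$, and the theorem reduces to showing that $G\cap(\zar{G}{}\setminus\bar U)$ has empty interior in $G$.

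The algebraic input is that $\zar{G}{}\setminus\bar U$ is contained in a proper Zariski-closed subset $X\subsetneq\zar{G}{}$. This is the classical fact that in a connected algebraic group the conjugates of a single Cartan subgroup already cover a Zariski-dense open set — equivalently, the regular elements form a Zariski-dense open set and each lies in a Cartan subgroup (see \cite[Ch.\,VI \S\,5]{Chevalley55AlgLie}). The cleanest route is to obtain $X$ from the dominance of the conjugation morphism $\zar{G}{}\times C'\to\zar{G}{}$ for a Cartan subgroup $C'$ of $\zar{G}{}$: its image is dense and constructible, hence contains a Zariski-open dense subset, so its complement lies in a proper Zariski-closed $X$. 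Granting this, $X$ is Zariski-closed in $\GL(n,R)$ and $\zar{G}{}\not\subseteq X$, so the contrapositive of Proposition \ref{Zcap} yields $\Int_G(X\cap G)=\emptyset$. As $G\setminus U\subseteq X\cap G$, we get $\Int_G(G\setminus U)=\emptyset$, that is, $U$ is dense in $G$, as required.

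The main obstacle is precisely the algebraic input and its descent to the real closed field $R$: one must guarantee not merely that $\bar U$ is Zariski dense but that its complement sits inside a proper Zariski-closed set defined over $R$, rather than only over the algebraic closure. Since $\zar{G}{}$ is defined over $R$ and irreducible, the relevant open condition (non-vanishing of the appropriate coefficient of the characteristic polynomial of $\Ad$) is cut out by a regular function over $R$ that is not identically zero on $\zar{G}{}$, so $X$ can be taken proper already over $R$; this is the step where one checks that Chevalley's statement, originally phrased over an algebraically closed field, transfers through the complexification $\zar{G}{K}$ to $R$. Everything else is a formal manipulation of Propositions \ref{chev} and \ref{Zcap}.
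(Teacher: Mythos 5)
Your main line is exactly the paper's own proof: identify $U=\bar U\cap G$ via Proposition \ref{chev}, trap $\zar{G}{}\setminus\bar U$ inside a proper Zariski-closed set $X$ coming from Chevalley's theory of regular elements, and conclude with the contrapositive of Proposition \ref{Zcap}. The paper takes $X$ to be the complement of the set of Chevalley-regular elements of $\zar{G}{}$, which by \cite[Ch.VI \S\,4 Prop.\,6]{Chevalley55AlgLie} is a nonempty Zariski-open subset of $\zar{G}{}$ and by [\emph{Ibid.}\,Thm.\,2] is covered by the Cartan subgroups of $\zar{G}{}$. Note that Chevalley's results are stated for base fields of characteristic zero, so they apply over $R$ directly; your closing worry about descent from the algebraic closure is therefore unnecessary, though your resolution is correct: the regular set is cut out over $R$ by the non-vanishing of the coefficient $a_{\rk}$ of the characteristic polynomial of $\Ad$, which is non-zero at some $R$-point of $\zar{G}{}$ by the very definition of the rank.

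However, your proposed ``cleanest route'' for producing $X$ --- dominance of the conjugation morphism $\zar{G}{}\times C'\to\zar{G}{}$ for a \emph{single} Cartan subgroup $C'$ --- fails over a real closed field and should be dropped. Two things go wrong. First, Chevalley's constructibility theorem for images of morphisms holds over algebraically closed fields; over $R$ the image is only semialgebraic (compare the image of $x\mapsto x^2$ on $R$), so ``dense constructible, hence contains a dense open'' is unavailable. Second, and more seriously, the desired conclusion is simply false for one Cartan subgroup: in $\mathrm{SL}(2,R)$ the conjugates of the diagonal Cartan cover only the elements with distinct eigenvalues in $R$, and the complement contains the Euclidean-open set of elements with non-real eigenvalues, which is Zariski-dense in the irreducible group $\mathrm{SL}(2)$ and hence not contained in any proper Zariski-closed $X$. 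This reflects the fact that over $R$ the Cartan subgroups fall into several conjugacy classes, so one must use the union over \emph{all} Cartan subgroups --- which is precisely what the regular-element route does. Substituting that route for the dominance argument, as your own hedge (``equivalently, the regular elements form a Zariski-dense open set and each lies in a Cartan subgroup'') already indicates, the proof is complete and coincides with the paper's.
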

\proof Let $\zar{G}{}$ be the Zariski closure of $G$ in  $\GL(n,R)$. Then, as above,  we are under the hypothesis of  \cite[Ch.VI \S\,4  Def.\,2]{Chevalley55AlgLie}. Let $U$ be the set of regular elements of the algebraic group $\zar{G}{}$ (in the sense  of [\emph{Ibid.} Def.\,2], see below Proposition\,\ref{reg^a}). Then, by  [\emph{Ibid.} Prop.\,6] $U$ is a nonempty Zariski open subset of $\zar{G}{}$.  

\emph{Claim:}  The set $U\cap G$ is dense in $G$. 

Indeed, if not $(\zar{G}{}\setminus U)\cap G$ has nonempty interior in $G$, and by Proposition\,\ref{Zcap}  we  get $\zar{G}{}\setminus U=\zar{G}{}$, a contradiction.

On the other hand,   by  [\emph{Ibid.} Thm.2]  $U\sub \bigcup\{H'\colon H' \in \Ctn(\zar{G}{})\}$ and hence $$\bigcup\{H'\cap G\colon H' \in \Ctn(\zar{G}{})\}=\bigcup\{H'\colon H' \in \Ctn(\zar{G}{})\}\cap G$$ is dense in $G.$ Finally, by Proposition\,\ref{chev}  we have that $$ \bigcup\{H'\cap G\colon H' \in \Ctn(\zar{G}{})\}=\bigcup\{H\colon H\in \Ctn(G)\}.$$
\endproof

 We end this section by giving a positive answer to the main questions left open in \cite{BJO14Cartan}. 
\begin{rem}\label{largedense}Note that since groups definable in an o-minimal structure are definable manifolds, a definable subset is dense if and only if is large (small codimension). 
\end{rem}

\begin{cor}\label{answers} Let $G$ be a definably connected group definable in an  o-minimal structure $\mathcal{M}$.Then,

\emph{(1)} all the Cartan subgroups of $G$  have the same dimension;

\emph{(2)} the union of the Cartan subgroups of $G$ is dense in $G$;

\emph{(3)} Cartan subgroups of $G/R^o(G)$ are exactly of the form $HR^o(G)/R^o(G)$ with $H$ a Cartan subgroup of $G$, where $R(G)$ denotes the \emph{radical} of $G$, and 

\emph{(4)}\,for every Cartan subgroup $H$  of  $G$ and $a\in H$ such that, modulo $R^o(G)$, $a$ is in a unique
conjugate of $aH^o$, $(aH^o)^{R^o(G)}$ is dense in $aH^oR^o(G).$
\end{cor}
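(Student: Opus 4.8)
The plan is to deduce all four assertions from the linear case over a real closed field, where Corollary \ref{equaldim} and Theorem \ref{denselin} already apply, together with the behaviour of Cartan subgroups under the quotient by the radical.

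For $(1)$ and $(2)$ I would first dispose of the abelian case: if $G$ is abelian it is its own unique Cartan subgroup and both statements are immediate. So assume $G$ is non-abelian; by the structure theory of groups definable in o-minimal structures the non-abelian case reduces to $G$ being definable in an o-minimal expansion $\mathcal R$ of a real closed field, so that $\g$ and the adjoint representation become available. The key linearisation is $\Ad\colon G\to \GL(\g)$: its kernel is exactly the centre $Z(G)$ (by Lemma \ref{normycen} with $H=G$, since $\ker\Ad=Z_G(\g)=Z_G(G)=Z(G)$), so $\Ad(G)\leq\GL(\g)$ is a definably connected \emph{linear} group over $R$ definably isomorphic to $G/Z(G)$. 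Because $Z(G)$ is central it lies in every maximal nilpotent subgroup, hence in every Cartan subgroup of $G$; thus $H\mapsto\Ad(H)$ is a bijection $\Ctn(G)\to\Ctn(\Ad(G))$ with $H=\Ad^{-1}(\Ad(H))$. Now $(1)$ follows from Corollary \ref{equaldim} applied to $G$ in $\mathcal R$, and $(2)$ follows from Theorem \ref{denselin} applied to $\Ad(G)$: its union of Cartan subgroups is dense, and since $\bigcup\Ctn(G)=\Ad^{-1}\big(\bigcup\Ctn(\Ad(G))\big)$ and $\Ad$ is a continuous open surjection, the preimage of a dense set is dense in $G$.

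For $(3)$ I would work with the normal definable connected solvable subgroup $N:=R^o(G)$ and the projection $\pi\colon G\to G/N$, passing throughout to Carter subgroups via Remark \ref{Carter-Cartan}. If $H\in\Ctn(G)$ then $H^o$ is a Carter subgroup of $G$ and $\pi(H^o)=H^oN/N$ is a Carter subgroup of $G/N$ (Carter subgroups are preserved under quotients); conversely every Carter subgroup of $G/N$ lifts to one of $G$ modulo the connected solvable $N$, using conjugacy of Carter subgroups. Recovering Cartan subgroups on both sides through $H=C_G(H^o)H^o$ as in Fact \ref{Cartan1}$(2)$ then yields $\Ctn(G/N)=\{\,HN/N:H\in\Ctn(G)\,\}$, which is the claim since $HN/N=\pi(H)$.

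Part $(4)$ is where I expect the main difficulty. With $N=R^o(G)$, the set $(aH^o)^{N}=\bigcup_{r\in N} r(aH^o)r^{-1}$ is contained in the coset $aH^oN$ of the definable subgroup $H^oN$, and the problem is to show it is \emph{large} there, so that density follows from Remark \ref{largedense}. The natural tool is the conjugation map $\mu\colon N\times aH^o\to aH^oN$, $(r,x)\mapsto rxr^{-1}$, whose image is $(aH^o)^{N}$; density amounts to the dimension count $\dim(aH^o)^{N}=\dim(H^oN)=\dim H^o+\dim N-\dim(H^o\cap N)$. The hypothesis that, modulo $N$, the element $a$ lies in a \emph{unique} conjugate of $aH^o$ is precisely what should control the generic fibre of $\mu$ and force the image to have the expected dimension. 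The hard part will be making this fibre estimate rigorous without the analytic tools of the Lie setting; I expect the rudimentary identity principle of Proposition \ref{Zcap}, the finiteness of conjugacy classes from Fact \ref{Cartan1}$(1)$, and the equivalence of density with largeness (Remark \ref{largedense}) to be the substitutes that carry the argument, with the density already obtained on $G/N$ in $(2)$–$(3)$ combined with this fibrewise density yielding the full statement.
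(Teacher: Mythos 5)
There is a genuine gap, and it sits exactly at the step you dismiss in one sentence: \emph{``by the structure theory of groups definable in o-minimal structures the non-abelian case reduces to $G$ being definable in an o-minimal expansion $\mathcal R$ of a real closed field.''} This reduction is the hard content of parts (1) and (2), and as stated it is false: the statement is for an \emph{arbitrary} o-minimal $\mathcal{M}$, and while non-abelianness of $G$ does force some real closed field to be definable in $\mathcal{M}$ (trichotomy), it does not make $G$ itself definably isomorphic to a group definable in an expansion of a single such field --- there may be several orthogonal definable fields and a non-linear central part. In particular your linearisation via $\Ad\colon G\to\GL(\g)$ presupposes the Lie algebra $\g$, which only exists after you are already in a field expansion, so it cannot effect the reduction. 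The paper's proof instead first quotients by $Z(G)$, using \cite[Lem.\,8]{BJO14Cartan} (every Cartan subgroup contains $Z(G)$ and $\Ctn(G/Z(G))=\{H/Z(G):H\in\Ctn(G)\}$), and then invokes Fr\'econ's theorem \cite[Thm.\,5.15]{Frecon17Lin}: $G/Z(G)=G_1\times\cdots\times G_m$ with each $G_i$ a definable subgroup of $\GL(n_i,R_i)$ for possibly different definable real closed fields $R_i$, together with \cite[Cor.\,10]{BJO14Cartan} to see that Cartan subgroups of the product are products of Cartan subgroups; only then do Corollary \ref{equaldim} and Theorem \ref{denselin} apply factor by factor. (Within a field expansion your $\Ad$-argument for (2) is essentially sound and amounts to the same centre-quotient, but it does not address the general $\mathcal{M}$.)

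Parts (3) and (4) are also not established by your proposal. The paper proves nothing new here: \cite[Prop.\,85]{BJO14Cartan} states precisely that (2) implies (3) and (4), so both parts are a citation once density is known. Your sketch of (3) rests on unproved assertions (that Carter subgroups are preserved under, and lift along, quotients by $R^o(G)$, and conjugacy of Carter subgroups in the solvable setting), and your treatment of (4) is explicitly incomplete by your own account: the fibre-dimension estimate for the conjugation map $\mu\colon N\times aH^o\to aH^oN$ is exactly the missing argument, and you only conjecture that Proposition \ref{Zcap} and Fact \ref{Cartan1}(1) will supply it. Note moreover that Proposition \ref{Zcap} is a statement about definable subgroups of $\GL(n,F)$ over a field and is not available in the general o-minimal structure $\mathcal{M}$ in which (4) is asserted, so even the proposed toolkit would need the same Fr\'econ-style reduction you omitted in (1)--(2).
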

\proof  The statement of  \cite[Prop.\,85]{BJO14Cartan}  says that (2) implies (3) and (4). 

To prove (1) and (2) we first consider the canonical definable projection $G\to G/Z(G)$. By \cite[Lem.\,8]{BJO14Cartan} each Cartan subgroup of $G$ contains $Z(G)$ and  $$\Ctn(G/Z(G))=\{H/Z(G): H\in \Ctn(G)\}.$$Thus, it suffices to prove (1) and (2) for  $G/Z(G)$.  By Fr\'econ  Theorem \cite[Thm.\,5.15]{Frecon17Lin} $G/Z(G)=G_1\times\cdots\times G_m$, where, for each $i=1,\dots,m$,  $G_i$ is a definable group and there are a definable real closed field $R_i$ and an integer $n_i$  such that $G_i$ is a definable subgroup of $\GL(n_i, R_i)$. By  \cite[Cor.\,10]{BJO14Cartan} $$\Ctn(G/Z(G))=\{H_1\times\cdots\times H_m: H_i\in \Ctn(G_i), i=1,\dots,m\}.$$Thus,  clearly it suffices to prove (1) and (2) for  each $G_i$, and moreover we can assume that each $G_i$ is definable in an o-minimal expansion of $R_i$. We conclude by applying  Corollary\,\ref{equaldim} and Theorem\,\ref{denselin}.
\endproof

\begin{cor}\label{generic} Let $G$ be a definably connected group definable in an o-minimal structure $\mathcal{M}$. Then, the union of the Cartan subgroups of $G$,  $X:=\bigcup\Ctn(G)$, is syndetic in $G$, i.e., there is a finite $E\sub G$ such that $XE=G$.
\end{cor}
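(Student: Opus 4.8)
The plan is to reduce the statement to the density result already established, together with the general principle that a \emph{large} definable subset of a definable group is syndetic. First I would check that $X=\bigcup\Ctn(G)$ is itself definable: by Fact~\ref{Cartan1}(1) the Cartan subgroups fall into finitely many conjugacy classes, say with representatives $H_1,\dots,H_r$, and each conjugacy class $\bigcup_{g\in G}H_i^{\,g}$ is the image of the definable map $G\times H_i\to G$, $(g,h)\mapsto ghg^{-1}$, hence definable; thus $X=\bigcup_{i=1}^{r}\bigcup_{g\in G}H_i^{\,g}$ is a finite union of definable sets and so definable.

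Next I would invoke Corollary~\ref{answers}(2), which gives that $X$ is dense in $G$. Since $G$ is a definable manifold and $X$ is definable, Remark~\ref{largedense} turns density into largeness: the complement $Y:=G\setminus X$ is definable with $\dim Y<\dim G=:n$. It then remains to prove the purely dimension-theoretic fact that a definable set of small codimension is syndetic; equivalently, rewriting $XE=G$ as $\bigcap_{e\in E}Ye=\emptyset$, I must exhibit a finite $E\subseteq G$ for which finitely many right translates of $Y$ have empty intersection.

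The main obstacle, and the only content beyond citing earlier results, is this last step, which I would prove by induction on $d:=\dim Y$ via a fibre-dimension computation. Consider the definable set $W:=\{(g,y):y\in Y\cap Yg\}\subseteq G\times G$. Projecting to the second coordinate, the fibre over $y\in Y$ is $\{g:yg^{-1}\in Y\}=Y^{-1}y$, of dimension $d$, so $\dim W=2d$ by additivity of dimension. Projecting instead to the first coordinate, with image $I:=\{g:Y\cap Yg\neq\emptyset\}$ and generic fibre dimension $e$, gives $2d=\dim I+e$. If $\dim I<n$ then $I\neq G$, so there is $g\notin I$, i.e. $Y\cap Yg=\emptyset$, and $E=\{1,g\}$ works. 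Otherwise $\dim I=n$ and $e=2d-n<d$, so a generic $g_0$ yields $Y':=Y\cap Yg_0$ with $\dim Y'<d$. By the inductive hypothesis there is a finite $E'$ with $\bigcap_{e'\in E'}Y'e'=\emptyset$, and since $Y'e'=Ye'\cap Yg_0e'$ the finite set $E'\cup g_0E'$ satisfies $\bigcap_{e\in E'\cup g_0E'}Ye=\emptyset$. This closes the induction and proves that $X$ is syndetic. (This last general fact is standard in the o-minimal setting and could alternatively simply be cited.)
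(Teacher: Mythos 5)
Your proposal is correct and takes essentially the same route as the paper: density of $X$ from Corollary \ref{answers}(2), conversion of density into largeness via Remark \ref{largedense}, and then the fact that a definable subset of small codimension in a definable group is syndetic, which the paper simply asserts as standard while you prove it. Your inductive fibre-dimension argument for that last step is valid; the only imprecision is the phrase ``generic fibre dimension $e$, gives $2d=\dim I+e$'', which should be replaced by the exact statement $\dim W=\max_k\left(\dim I_k+k\right)$ over the pieces $I_k=\{g:\dim(Y\cap Yg)=k\}$, from which your conclusion (if $\dim I=n$ then some $g_0$ has $\dim(Y\cap Yg_0)\leq 2d-n<d$) follows immediately.
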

\proof  The set $X$ is dense in $G$ by Corollary\,\ref{answers}\,(2). Now, since $X$ is definable this is equivalent to $X$ being of small codimension, i.e. $\dim (G\setminus X)<\dim G$,  and this in turn implies that finitely many translates of $X$ covers $G$.
\endproof

In  \cite[Remark 57]{BJO14Cartan} we proved that  in the case of $\mathrm{SL}(2,\R)$, it suffices to consider the conjugates of a specially chosen Cartan subgroup (the diagonal matrices) to get a syndetic subset of $\mathrm{SL}(2,\R)$, and we conjecture that this is always the case.

\begin{conj}\label{conj1} Let $G$ be a definably simple group definable in an o-minimal structure $\mathcal{M}$. Then, there is a Cartan subgroup $H$ of $G$ such that  $H^G$ is syndetic in $G$.
\end{conj}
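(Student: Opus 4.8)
As this statement is conjectural, what follows is a strategy rather than a proof, together with an identification of the essential obstruction. The plan is to reduce to the linear--algebraic setting, single out a \emph{maximally split} Cartan subgroup, and show that its conjugates already cover $G$ after a single finite translation. The whole difficulty is that, in contrast with the total union $\bigcup\Ctn(G)$ treated in Corollary~\ref{answers} and Corollary~\ref{generic}, the conjugates of one Cartan subgroup need not be dense, so syndeticity cannot be extracted from a codimension count.

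First I would reduce to the case $G\le\GL(n,R)$. Since $G$ is definably simple its centre is finite, so I may pass to $G/Z(G)$ --- which preserves the relevant correspondence of Cartan subgroups exactly as in the proof of Corollary~\ref{answers} --- and apply Fr\'econ's theorem \cite[Thm.\,5.15]{Frecon17Lin} to realise $G$ as a definable linear group. Let $\zar{G}{}$ be its Zariski closure, a connected almost simple algebraic group over $R$ by Proposition~\ref{zarcl}; by Proposition~\ref{chev} the Cartan subgroups of $G$ are precisely the sets $H'\cap G$ with $H'\in\Ctn(\zar{G}{})$, so the problem is governed by the relative root datum of $\zar{G}{}$. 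I would then take $H$ to be the Cartan subgroup of $G$ attached to a maximal $R$-split torus $A\le\zar{G}{}$, the analogue of the diagonal subgroup of $\mathrm{SL}(2,R)$ distinguished in \cite[Remark\,57]{BJO14Cartan}. The conjugates $A^{G}\sub H^{G}$ contain all $R$-split regular elements of $G$, and the guiding idea is that multiplying an arbitrary element by a fixed ``large'' split element forces a real eigenvalue of large modulus.

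To prove $G=H^{G}E$ for a finite $E$ I would use a definable Cartan-type decomposition $G=KA^{+}K$, with $K$ a maximal definably compact subgroup (available through the structure theory of \cite{Conversano13}) and $A^{+}$ a closed positive chamber of $A$. Writing $g=k_{1}ak_{2}$ and choosing $e$ from a finite $\delta$-net $E\sub K$ with $k_{2}e^{-1}$ within $\delta$ of $k_{1}^{-1}$ --- possible uniformly because $K$ is definably compact --- one gets $ge^{-1}=(k_{1}ak_{1}^{-1})u$ with $u\in K$ close to the identity. When $a$ is deep inside the chamber the element $k_{1}ak_{1}^{-1}$ is \emph{regular} split with widely separated real eigenvalues, and since both its trace-type invariants and the size of the perturbation scale with $\|a\|$, the perturbed element $ge^{-1}$ remains regular split, hence lies in $A^{G}\sub H^{G}$; crucially the required fineness $\delta$ is independent of how large $a$ is. The remaining elements, those with $a$ in a bounded part of the chamber, lie in a definably compact set $KA^{+}_{\mathrm{bdd}}K$, which is covered by finitely many translates of any fixed nonempty open subset of the full-dimensional set $H^{G}$ (Corollary~\ref{equaldim} guarantees $\dim H^{G}=\dim G$). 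Collecting the two finite sets yields the desired $E$. In rank one this recovers the $\mathrm{SL}(2,R)$ computation of \cite[Remark\,57]{BJO14Cartan}.

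The hard part will be twofold. First, I would need a \emph{definable} Cartan decomposition $G=KA^{+}K$ valid over an arbitrary o-minimal expansion of a real closed field, with a genuinely definably compact $K$; establishing this without the exponential map or Haar measure is itself substantial and is where the weakness of the o-minimal context bites hardest. Second, and more seriously, in higher split rank the perturbation argument fails near the \emph{walls} of the Weyl chamber: an element $a$ can have large norm yet repeated eigenvalues, so $k_{1}ak_{1}^{-1}$ is split but not regular, and $R$-diagonalisability is not an open condition there --- a bounded perturbation can push a pair of real eigenvalues into a complex conjugate pair and out of $A^{G}$. Handling this boundary region uniformly, so that a single finite $E$ still works, is exactly the point at which the total-union density of Corollary~\ref{answers} gives no help (in $\mathrm{SL}(2,R)$ already $A^{G}$ is full-dimensional but not dense, its complement being the open elliptic locus), and it is the reason the statement remains open even for simple real Lie groups.
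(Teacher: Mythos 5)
This statement is a conjecture: the paper offers no proof of it, and indeed remarks immediately afterwards that it is open even when $G$ is a simple Lie group (the only evidence given is the $\mathrm{SL}(2,\R)$ computation of \cite[Remark\,57]{BJO14Cartan}, plus the reduction showing that the definably simple case would imply the general case). You were right not to manufacture a proof, and your strategy cannot be compared against a paper proof because none exists. Judged on its own terms, the sketch is sensible and the obstruction you isolate is the genuine one: regularity of split elements is an open condition only away from the walls of the chamber, so the perturbation argument for $g=k_{1}ak_{2}$ with $a$ large but near a wall breaks down exactly as you say (a bounded perturbation can move a pair of real eigenvalues into a complex pair), and the density of the \emph{total} union $\bigcup\Ctn(G)$ from Corollary~\ref{answers} gives no purchase on the conjugates of a single Cartan subgroup, whose union is full-dimensional but not dense already in $\mathrm{SL}(2,\R)$.

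Two refinements to your reduction. First, for $G$ definably simple you do not need Fr\'econ's theorem or the quotient by $Z(G)$ (which is trivial here): the results of \cite{PPS00Def,PPS02Lin}, which the paper cites, give directly that a definably simple group is definably isomorphic to a semialgebraic subgroup of some $\GL(n,R)$ over a definable real closed field. Second, your first ``hard part'' is softer than you suggest: once $G$ is semialgebraic, a Cartan-type decomposition $G=KA^{+}K$ with $K$ semialgebraically compact is a first-order statement about a fixed algebraic group, holds over $\R$ by classical Lie theory, and transfers to any real closed field by Tarski--Seidenberg; no exponential map or Haar measure is needed. This concentrates the entire difficulty in your second gap --- the uniform treatment of the wall region with a single finite $E$ --- which is precisely why the statement remains a conjecture, so your proposal should be read as a correct diagnosis of the problem rather than progress toward a proof.
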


As we have already mentioned, this is not known even when $G$ is a simple Lie group.  Note also that if the definable group is not definably compact,  definably simple is equivalent to simple  \cite[Cor.6.3]{PPS02Lin}.

\begin{prop}If Conjecture \emph{\ref{conj1}} holds true, then every definable group $G$ has a Cartan subgroup $H$ of $G$ such that  $H^G$ is syndetic in $G$.
\end{prop}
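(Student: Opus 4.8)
The plan is to reduce the general case to the definably simple case supplied by Conjecture \ref{conj1}, using the structure theory of definable groups together with parts (3) and (4) of Corollary \ref{answers}. First I would reduce to $G$ definably connected. If $Q$ is the definably connected component of a Cartan subgroup of $G^o$, then $Q$ is a Carter subgroup of $G$ as well, since $N_{G^o}(Q)$ has finite index in $N_G(Q)$ (as $[G:G^o]<\infty$) and $Q$ has finite index in $N_{G^o}(Q)$. By Remark \ref{Carter-Cartan} the Cartan subgroup $H:=C_G(Q)Q$ of $G$ contains the Cartan subgroup $H_0:=C_{G^o}(Q)Q$ of $G^o$, so $H_0^{G^o}\subseteq H^G$; since finitely many cosets of $G^o$ cover $G$, syndeticity of $H_0^{G^o}$ in $G^o$ forces syndeticity of $H^G$ in $G$.

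Now let $G$ be definably connected, let $N:=R^o(G)$ be its connected radical, and let $\pi\colon G\to S:=G/N$ be the projection onto the semisimple quotient. As $Z(S)$ is finite and, by the structure theory of semisimple definable groups, $S/Z(S)$ is a direct product $S_1\times\cdots\times S_m$ of definably simple groups, Conjecture \ref{conj1} gives for each $i$ a Cartan subgroup $H_i$ of $S_i$ with $H_i^{S_i}$ syndetic. By the product formula \cite[Cor.\,10]{BJO14Cartan} the product $\prod_i H_i$ is a Cartan subgroup of $S/Z(S)$, and its conjugacy class $\prod_i H_i^{S_i}$ is syndetic there. Pulling back through the finite central quotient $S\to S/Z(S)$ (Cartan subgroups contain the centre and correspond to those of the quotient by \cite[Lem.\,8]{BJO14Cartan}), and using that the preimage of a syndetic set is syndetic, I get a Cartan subgroup $\bar H$ of $S$ with $\bar H^S$ syndetic in $S$. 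By Corollary \ref{answers}(3) there is a Cartan subgroup $H$ of $G$ with $\pi(H)=\bar H$.

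It remains to transfer syndeticity across $N$. Since $\pi(H^G)=\bar H^S$ is syndetic, its preimage $\pi^{-1}(\bar H^S)=H^GN$ is syndetic, say $H^GN\cdot E_0=G$ with $E_0$ finite, and in particular $\dim(H^GN)=\dim G$. I would then show $H^G$ is \emph{large} in $H^GN$, i.e. $\dim\big((H^GN)\setminus H^G\big)<\dim G$: by Corollary \ref{answers}(4) applied to generic $a\in H$, the set $(aH^o)^N\subseteq H^G$ is large in the coset $aH^oN$, so for generic $\bar y\in\bar H$ the fibre $H^G\cap\pi^{-1}(\bar y)$ is large in the $N$-coset $\pi^{-1}(\bar y)$; conjugating and using $\bar H^S=\bigcup_{\bar g}\bar H^{\bar g}$, this holds for generic $\bar y\in\bar H^S$, and a fibration-dimension count over $\bar H^S$ yields the desired largeness. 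Finally, from $G=\bigcup_{e\in E_0}(H^GN)e$ and $(H^GN)e\setminus H^Ge\subseteq\big((H^GN)\setminus H^G\big)e$ one gets $\dim\big(G\setminus H^GE_0\big)<\dim G$; thus $H^GE_0$ is large, hence syndetic by Corollary \ref{generic}, and therefore $H^G$ is syndetic in $G$.

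The hard part will be exactly this last transfer across the solvable radical. Unlike largeness, syndeticity of a conjugation-invariant set does not follow from full dimension, and the naive fibrewise covers of the $N$-cosets depend on the chosen conjugate, hence are a priori infinite in number; the device above avoids this by first upgrading the fibrewise information of Corollary \ref{answers}(4) to genuine largeness of $H^G$ inside the syndetic set $H^GN$, after which largeness (being preserved under finite unions of translates) is converted back into syndeticity. The two points demanding the most care are the verification that Corollary \ref{answers}(4) applies to a large set of $a\in H$, i.e. that generic $a$ satisfy its uniqueness hypothesis, and the uniform o-minimal bookkeeping in the fibration step establishing largeness of $H^G$ in $H^GN$.
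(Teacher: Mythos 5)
Your overall architecture matches the paper's: decompose $G/R^o(G)$ into definably simple pieces, apply Conjecture \ref{conj1} factorwise, pull back via Corollary \ref{answers}(3) to a Cartan subgroup $H$ of $G$ with $(HR^o(G))^G$ syndetic, then use Corollary \ref{answers}(4) to fill up $HR^o(G)$ with conjugates of $H$, and convert back to syndeticity of $H^G$ at the end. (Your preliminary reduction to definably connected $G$, and your passage through $S/Z(S)$ rather than the paper's central product of finite-by-definably simple factors via \cite[Remark 87, Lemma 9]{BJO14Cartan}, are equivalent bookkeeping.) However, the two points you flag at the end are genuine gaps as the proposal stands, and the paper closes both differently.

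First, the uniqueness hypothesis of Corollary \ref{answers}(4): you propose to show that \emph{generic} $a\in H$ satisfies it, which you do not prove and which is also more than is needed. The conclusion of \ref{answers}(4) --- that $(aH^o)^{R^o(G)}$ is dense in $aH^oR^o(G)$ --- depends only on the coset $aH^o$, not on the representative, so one good representative per coset (and there are finitely many cosets) suffices; the existence of such a representative is exactly what the paper imports from \cite[Thm 82]{BJO14Cartan}, an external input not recoverable by genericity considerations inside the present argument. Second, and more seriously, your ``fibration-dimension count over $\bar H^S$'' does not go through as stated: you obtain largeness of the fibre of $H^G$ only at generic points of each individual conjugate $\bar H^{\bar g}$, with an exceptional set depending on $\bar g$, and since there are infinitely many conjugates the union of the bad sets is a priori not small --- your ``device'' restates this difficulty rather than resolving it. The paper sidesteps dimension counting here entirely by working with \emph{density}, which, unlike largeness, passes through arbitrary unions: $H^{R^o(G)}$ is dense in $HR^o(G)$, conjugation is a homeomorphism, and the closure of a union contains the union of closures, so $H^G=(H^{R^o(G)})^G$ is dense in $(HR^o(G))^G$; hence $EH^G$ is dense in $G$ for a suitable finite $E$, and only at this final step does one invoke definability to convert dense into large and then syndetic, as in Corollary \ref{generic}. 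Reordering your argument this way --- density through the union first, definability only at the end --- repairs the second gap; the citation of \cite[Thm 82]{BJO14Cartan} remains necessary for the first.
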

\begin{proof}Let $R^o:=R^o(G)$ be the connected component of the radical of $G$. The quotient $G/R^o$ is a central product of finite-by-definably simple subgroups \cite[Remark 87]{BJO14Cartan}. Moreover, by \cite[Lemma 9]{BJO14Cartan}, a Cartan subgroup of $G/R^o$ is a product of Cartan subgroups of the factors. Therefore, it follows from the hypothesis that $G/R^o$ has a Cartan subgroup whose set of conjugates is syndetic. In other words, by Corollary \ref{answers}\,(3) there is a Cartan subgroup $H$ of $G$ such that $(HR^o)^G$ is syndetic in $G$.

On the other hand, $(H^o)^{R^o}$ is dense in $H^oR^o$. For, $H^o$ is a Carter subgroup of the solvable definably connected group $H^oR^o$, so it follows from  \cite[Thm 40]{BJO14Cartan}. Now, we claim that $H^{R^o}$ is dense in $HR^o$. Indeed, let $aH^o$ be a coset of $H^o$ in $H$.
By Corollary \ref{answers}.(3) we have that $HR^o/R^o$ is a Cartan subgroup of $G/R^o$. In particular, $H^oR^o/R^o$ is its connected component and $aH^oR^o/R^o$ a coset. By \cite[Thm 82]{BJO14Cartan} we can assume that, modulo $R^o$, $a$ is in a unique conjugate of $aH^o$. Therefore, by Corollary \ref{answers}.(4) we get that $(aH^o)^{R^o}$ is dense in $aH^oR^o$. In particular, we deduce that $H^{R^o}$ is dense in $HR^o$.

Finally, since $H^{R^o}$ is dense in $HR^o$, we get that $(H^{R^o})^G=H^G$ is dense in $(HR^o)^G$. Since $(HR^o)^G$ is syndetic in $G$ there is a finite $E\sub G$ such that $E(HR^o)^G=G $. Thus, $EH^G$ is dense in $G$, and so syndetic in $G$. It follows that $H^G$ is also syndetic in $G$, as required. 
\end{proof}

\section{A characterisation of Cartan subgroups}\label{sect:CharCar}

{In this section we fix a definably connected  group $G$ definable  as usual in our o-minimal expansion of a field $R$.
Given such $G$ with Lie algebra $\g$, and a Cartan subalgebra $\h$ of $\g$,
we want to show that there is a Cartan subgroup $H\leq G$ whose Lie algebra
is $\h$. The problem is that in the definable context we do not have
a general correspondence between analytic  subgroups and Lie subalgebras,
mainly because of the lack of an exponential map. To that aim, we shall introduce an alternative definition of Cartan
subgroups of $G$, given by Karl-Hermann Neeb in \cite{Neeb96Weaklyexp} for connected Lie}
groups (see Definition \ref{CCartan}).

 As in the classical case we have the following fact.
\begin{rem}
 Let $\h$  be a nilpotent
subalgebra of $\g$. Then there is a  definable
action of $N_{G}(\h)$ on the set of roots $\Lambda:=\Lambda(\g,\h)$
\[
\begin{array}{rcl}
N_{G}(\h)\times\Lambda & \rightarrow & \Lambda\\
(g,\lambda) & \mapsto & \lambda\circ \Ad(g)|_{\h_{K}}
\end{array}
\]
\end{rem}
\proof We use the notation introduced in \S\,\ref{laycsa}, in particular $\rho$ is the restriction  to $\h_K$ of the adjoint representation of $\g_K$. We  have to show that $\lambda\circ \Ad(g)|_{\h_{K}}\in\Lambda$, for each $g\in N_G(\h)$ and each $\lambda\in\Lambda$. By definition of $\Lambda$ there is $X\in\g_{K}$, $X\not=0$, such that $\rho(Z)(X)=\lambda(Z)X$
for all $Z\in\h$. On the other hand, recall that $\Ad(g^{-1})[X_{1},X_{2}]=[\Ad(g^{-1})X_{1},\Ad(g^{-1})X_{2}]$, for all {$X_1,X_2\in\g_K$}
so that, for all $Z\in\h$, 
\begin{equation*}
\begin{split}
\rho(\Ad(g^{-1})(Z))\Ad(g^{-1})(X)&=[\Ad(g^{-1})(Z),\Ad(g^{-1})(X)]=\Ad(g^{-1})[Z,X] \\ 
&=\Ad(g^{-1})\ad(Z)(X)=\Ad(g^{-1})\rho(Z)(X)\\
&=\Ad(g^{-1})\lambda(Z)(X)=\lambda(Z)\Ad(g^{-1})X\\
&=(\lambda\circ \Ad(g))(\Ad(g^{-1})(Z))\Ad(g^{-1})X.
\end{split}
\end{equation*}
This shows that $\lambda\circ \Ad(g)|_{\h_{K}}$ is a root. 
\endproof

\begin{definition}\label{CCartan}Let $\h$ be a Cartan subalgebra of $\g$. Then
\[
C(\h):=\{g\in N_{G}(\h):\lambda\circ \Ad(g)|_{\h_{K}}=\lambda,\text{ for all }\lambda\in\Lambda(\g,\h)\}.
\]
is a definable subgroup of $G$ which we call a \emph{$C$-Cartan
subgroup} of $G$. 
\end{definition}

Our aim is to prove that the concepts of Cartan  subgroup and $C$-Cartan subgroup coincide. We begin  by establishing some basic properties of $C$-Cartan subgroups and their Lie algebras.
\begin{lem}
\label{Lie-of-C-h}Let $\h$ be a Cartan subalgebra of $\g$.
Then $\Lie(C(\h))=\h$. 
\end{lem}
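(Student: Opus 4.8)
We must show $\Lie(C(\h))=\h$ where $\h$ is a Cartan subalgebra of $\g$ and $C(\h)=\{g\in N_G(\h):\lambda\circ\Ad(g)|_{\h_K}=\lambda\ \forall\lambda\in\Lambda(\g,\h)\}$.

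**Strategy.** The Lie algebra of $C(\h)$ is computed by differentiating the defining conditions. So I should identify which elements $X\in\g$ are tangent to $C(\h)$ at the identity, i.e. which curves through $e$ stay in $C(\h)$ to first order.

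**Key computation.** $C(\h)\subseteq N_G(\h)$, so by Remark~\ref{norcen}, $\Lie(C(\h))\subseteq\Lie(N_G(\h))=\n_\g(\h)=\h$ (the last equality because $\h$ is Cartan, so $\h=\n_\g(\h)$). This gives the inclusion $\Lie(C(\h))\subseteq\h$ essentially for free.

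For the reverse inclusion $\h\subseteq\Lie(C(\h))$: I need to show that the extra "root-fixing" conditions $\lambda\circ\Ad(g)|_{\h_K}=\lambda$ do not cut down $\h$ infinitesimally. Differentiating the condition $\lambda\circ\Ad(g)|_{\h_K}=\lambda$ along a curve $\alpha$ with $\overline\alpha=X\in\h$ gives the infinitesimal condition $\lambda\circ\ad(X)|_{\h_K}=0$, i.e. $\lambda([X,Z])=0$ for all $Z\in\h_K$. But $\h$ is nilpotent, hence $[\h,\h]\subseteq\h$ and in fact $[X,Z]\in\h$; and each root $\lambda$ vanishes on $[\h,\h]$ because $\lambda$ is a weight of the nilpotent (solvable) action of $\h$ on itself — by Lie's theorem $\ad(\h)|_{\h_K}$ is simultaneously triangularizable and $\lambda([Z_1,Z_2])=0$ for all $Z_1,Z_2\in\h_K$. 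Hence every $X\in\h$ satisfies the infinitesimal root-fixing conditions automatically.

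**Plan of the proof.** The plan is: first, from $C(\h)\subseteq N_G(\h)$ and Remark~\ref{norcen} together with $\n_\g(\h)=\h$ (Cartan), conclude $\Lie(C(\h))\subseteq\h$. Second, for the reverse inclusion, take $X=\overline\alpha\in\h$ for a $C^1$-curve $\alpha$ in $N_G(\h)$; I would check that $X\in\Lie(C(\h))$ by verifying the root-fixing conditions are satisfied along a suitable curve. The cleanest route is to exhibit, for each $X\in\h$, that the connected subgroup generated infinitesimally by $X$ lies in $C(\h)$, or more directly to compute the tangent space of $C(\h)$ as the kernel of the differential of the map $g\mapsto(\lambda\circ\Ad(g)|_{\h_K}-\lambda)_{\lambda\in\Lambda}$ restricted to $N_G(\h)$, and to show this differential vanishes identically on $\h=\Lie(N_G(\h))$. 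The differential sends $X\in\h$ to the linear forms $Z\mapsto\lambda(\ad(X)(Z))=\lambda([X,Z])$, which are zero by the nilpotency argument above. Thus $\h\subseteq\Lie(C(\h))$.

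**Main obstacle.** The delicate point is justifying the differentiation of the root-fixing condition in the definable (non-analytic) setting, where we lack an exponential map to pass freely between $\h$ and one-parameter subgroups. The reverse inclusion cannot simply integrate $X\in\h$ to a curve in $C(\h)$; instead I expect to argue at the level of tangent spaces, showing $T_e(C(\h))=\ker(d\Phi)\cap\h$ where $\Phi(g)=(\lambda\circ\Ad(g)-\lambda)_\lambda$, and that $d\Phi|_\h=0$. This requires care: $C(\h)$ is defined by equations, and one must ensure $\Lie(C(\h))$ equals the kernel of the differential of these equations (rather than being a proper subspace). A clean way is to note that $C(\h)$ has finite index in $N_G(\h)$, since the $N_G(\h)$-action on the finite set $\Lambda$ has $C(\h)$ as a stabilizer (kernel of a homomorphism to a finite symmetric group); hence $\Lie(C(\h))=\Lie(N_G(\h))=\n_\g(\h)=\h$ directly, bypassing the differentiation entirely. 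This finite-index observation is likely the intended and simplest argument, and I would make it the backbone of the proof.
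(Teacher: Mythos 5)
Your final observation is precisely the paper's proof: $C(\h)$ is the kernel of the action of $N_G(\h)$ on the finite set $\Lambda$, hence of finite index in $N_G(\h)$, so $\Lie(C(\h))=\n_\g(\h)=\h$ since $\h$ is a Cartan subalgebra. The preceding differentiation discussion is unnecessary (and, as you yourself note, delicate in the definable setting), but since you correctly identify the finite-index argument as the backbone, the proposal is correct and essentially identical to the paper's argument.
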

\begin{proof}
$C(\h)$ is the kernel of the action of $N_{G}(\h)$ on the finite
set $\Lambda$, hence it has finite index in $N_{G}(\h)$, and therefore
it has the same Lie algebra, namely $\mathfrak{n}{}_{\mathfrak{g}}(\h)$,
which is $\h$ since $\h$ is a Cartan subalgebra of $\mathfrak{g}$. 
\end{proof}
\begin{cor}
\label{C-Cartan}Let $H$ be a definable subgroup  of $G$. Then, $H$ is a $C$-Cartan subgroup of $G$
if and only if $\h$ is a Cartan subalgebra of $\g$ and $C(\h)=H$. 
\end{cor}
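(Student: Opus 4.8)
The plan is to read off both implications directly from Definition \ref{CCartan} together with Lemma \ref{Lie-of-C-h}, the latter being the only real input. The backward implication is essentially a restatement of the definition: if $\h=\Lie(H)$ is a Cartan subalgebra of $\g$ and $C(\h)=H$, then $H$ is by definition of the form $C(\mathfrak k)$ for a Cartan subalgebra $\mathfrak k$ of $\g$ (namely $\mathfrak k=\h$), hence $H$ is a $C$-Cartan subgroup, and there is nothing further to check.

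For the forward implication I would start from the assumption that $H$ is a $C$-Cartan subgroup, which by Definition \ref{CCartan} means $H=C(\mathfrak k)$ for \emph{some} Cartan subalgebra $\mathfrak k$ of $\g$. The only potential subtlety is that a priori the Cartan subalgebra $\mathfrak k$ producing $H$ need not obviously coincide with $\h=\Lie(H)$, so one must argue that the defining data is recoverable from $H$ itself. This is exactly where Lemma \ref{Lie-of-C-h} is used: it gives $\Lie(C(\mathfrak k))=\mathfrak k$, so that $\h=\Lie(H)=\Lie(C(\mathfrak k))=\mathfrak k$. Consequently $\h$ equals $\mathfrak k$, which is a Cartan subalgebra of $\g$, and moreover $C(\h)=C(\mathfrak k)=H$, as required.

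I do not expect a genuine obstacle here; the corollary is a formal consequence of Lemma \ref{Lie-of-C-h}. The one conceptual point worth stressing in the write-up is that Lemma \ref{Lie-of-C-h} rigidifies the correspondence $\mathfrak k\mapsto C(\mathfrak k)$ by showing it has a left inverse given by $H\mapsto\Lie(H)$ on its image; this is what guarantees that the Cartan subalgebra attached to a $C$-Cartan subgroup is unambiguously its own Lie algebra, and it is the statement that makes the equivalence usable in the sequel (e.g.\ when comparing $C$-Cartan subgroups with Cartan subgroups via their Lie algebras).
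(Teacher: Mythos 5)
Your proof is correct and matches the paper's intent exactly: the paper states Corollary \ref{C-Cartan} without a written proof, treating it as an immediate consequence of Lemma \ref{Lie-of-C-h}, which is precisely how you argue — the backward direction is the definition, and the forward direction uses $\Lie(C(\mathfrak{k}))=\mathfrak{k}$ to identify the defining Cartan subalgebra with $\h=\Lie(H)$. Nothing is missing.
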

\begin{prop}
\label{NCar1}Let $H$ be a Cartan subgroup of $G$. Then, $\mathfrak h$ is a Cartan subalgebra of $\g$, and 
 $H$ is contained in the $C$-Cartan $C(\mathfrak h)$ of $G$.
 \end{prop}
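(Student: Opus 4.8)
The plan is to deduce both assertions from the structure theory of Cartan subgroups recalled earlier together with the finite-index description of $C(\h)$ inside $N_G(\h)$. First I would note that the claim ``$\h$ is a Cartan subalgebra of $\g$'' is essentially immediate: by Remark\,\ref{Carter-Cartan} the definably connected component $H^o$ is a Carter subgroup of $G$, and since $H^o$ has finite index in $H$ we have $\Lie(H)=\Lie(H^o)=\h$; now Proposition\,\ref{defgpLA}(3) tells us precisely that $H^o$ being a Carter subgroup is equivalent to $\h$ being a Cartan subalgebra of $\g$. In particular $C(\h)$ is well defined (Definition\,\ref{CCartan}).

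Next I would show $H\sub N_G(\h)$. Since $H^o$ is the definably connected component of $H$ it is normal in $H$, so $H\sub N_G(H^o)$; and because $H^o$ is definably connected, Lemma\,\ref{normycen} gives $N_G(H^o)=N_G(\h)$. Hence $H\sub N_G(\h)$, so it makes sense to ask whether $H$ acts trivially on the finite root set $\Lambda=\Lambda(\g,\h)$. For the connected part this is free: as recorded in the proof of Lemma\,\ref{Lie-of-C-h}, $C(\h)$ is the kernel of the action of $N_G(\h)$ on the finite set $\Lambda$ and therefore has finite index in $N_G(\h)$. A definably connected subgroup of $N_G(\h)$ must lie in any finite-index subgroup, so $H^o\sub C(\h)$.

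The only remaining point -- and the one carrying the real content -- is the finite component group $H/H^o$. Here I would invoke Fact\,\ref{Cartan1}(2), which for definably connected $G$ gives the decomposition $H=C_G(H^o)H^o$. By Lemma\,\ref{normycen} the centralizer $C_G(H^o)$ coincides with $Z_G(\h)=\{g\in G:\Ad(g)|_{\h}=\id_{\h}\}$. For such $g$ one has $\Ad(g)|_{\h_K}=\id_{\h_K}$ (the identity complexifies to the identity), whence $\lambda\circ\Ad(g)|_{\h_K}=\lambda$ for every $\lambda\in\Lambda$, and trivially $g\in N_G(\h)$; thus $C_G(H^o)\sub C(\h)$. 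Combining this with $H^o\sub C(\h)$ and using that $C(\h)$ is a subgroup yields
\[
H=C_G(H^o)H^o\sub C(\h)\,C(\h)=C(\h),
\]
as desired. I expect the main obstacle to be precisely the treatment of $H/H^o$: the connected component enters $C(\h)$ cheaply by the finite-index argument, so the crux is recognizing that the extra components are supplied by $C_G(H^o)$, which consists of elements fixing $\h$ pointwise and hence acting trivially on all roots.
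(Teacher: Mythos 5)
Your proof is correct and follows essentially the same route as the paper's: both establish that $\h$ is a Cartan subalgebra via the Carter subgroup $H^o$ and Proposition\,\ref{defgpLA}, and both reduce the inclusion $H\sub C(\h)$ to the decomposition $H=C_G(H^o)H^o$ of Fact\,\ref{Cartan1}(2), observing that elements of $C_G(H^o)$ act as the identity on $\h_K$ and hence fix every root. The only (inessential) variation is your handling of $H^o$: you put it inside $C(\h)$ by the finite-index-kernel argument together with definable connectedness, whereas the paper identifies $H^o=C(\h)^o$ from $\Lie(C(\h))=\h$ (Lemma\,\ref{Lie-of-C-h}); both arguments are valid and equally short.
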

\begin{proof}
By Proposition\,\ref{defgpLA}  $\mathfrak h :=\Lie(H)=\Lie(H^o)$  is a Cartan subalgebra of $\g$, so by definition $C(\mathfrak h )$ is a $C$-Cartan of $G$.
Let us show that $H\leq C(\mathfrak h )$. { By Lemma\,\ref{Lie-of-C-h}
$\mathfrak h =\Lie(C(\mathfrak h ))$, and therefore  $H^o=C(\h)^o \sub C(\h)$. On the other hand, by Fact \ref{Cartan1}(2)
we have that $H=H^oC_{G}(H^o)$. Let $g\in C_{G}(H^o)$. Then $\Ad(g)_{|\mathfrak h _K}=\id_{{\mathfrak h}_K}$, thus $g\in C(\mathfrak h)$. Therefore  $H=H^oC_{G}(H^o)\sub C(\mathfrak h)$.}
\end{proof}
\begin{prop}
\label{NCar2}Let $\h$ be a Cartan subalgebra of $\g$. Then
$C(\h)^o$ is a Carter subgroup of $G$. Moreover, if $C(\h)$
is nilpotent then it is a Cartan subgroup of $G$. 
\end{prop}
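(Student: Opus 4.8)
The plan is to prove the two assertions separately, leaning on the Lie-algebra dictionary already established and avoiding any appeal to an exponential map.

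For the first assertion, that $C(\h)^o$ is a Carter subgroup: by Lemma~\ref{Lie-of-C-h} we have $\Lie(C(\h))=\h$, and since $C(\h)^o$ has finite index in $C(\h)$ it carries the same Lie algebra, so $\Lie(C(\h)^o)=\h$. As $\h$ is a Cartan subalgebra of $\g$ by hypothesis and $C(\h)^o$ is definably connected, Proposition~\ref{defgpLA}(3) gives at once that $C(\h)^o$ is a Carter subgroup of $G$. This step is immediate.

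For the second assertion, I would assume $C(\h)$ nilpotent and try to identify it with the canonical Cartan subgroup attached to the Carter subgroup $C(\h)^o$. By Remark~\ref{Carter-Cartan}, since $C(\h)^o$ is a Carter subgroup, $H_0:=C_G(C(\h)^o)\,C(\h)^o$ is the unique Cartan subgroup of $G$ containing $C(\h)^o$; in particular $H_0$ is maximal nilpotent. The plan is then to show $H_0\subseteq C(\h)$ and conclude by maximality: since $C(\h)$ is nilpotent and contains the maximal nilpotent subgroup $H_0$, the two must coincide, whence $C(\h)=H_0$ is a Cartan subgroup of $G$.

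The crux is therefore the inclusion $C_G(C(\h)^o)\subseteq C(\h)$, and this is precisely where the defining property of a $C$-Cartan subgroup is used. For $g\in C_G(C(\h)^o)$, Lemma~\ref{normycen} applied to the definably connected subgroup $C(\h)^o$ yields $\Ad(g)|_{\h}=\id_{\h}$; hence $g\in N_G(\h)$, and moreover $\lambda\circ\Ad(g)|_{\h_K}=\lambda$ for every root $\lambda\in\Lambda(\g,\h)$, so $g\in C(\h)$ by Definition~\ref{CCartan}. Combined with $C(\h)^o\subseteq C(\h)$, this gives $H_0=C_G(C(\h)^o)\,C(\h)^o\subseteq C(\h)$, as wanted. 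I expect this inclusion to be the only substantive point; the rest is bookkeeping through the Lie-algebra correspondence. The one place to be careful is ensuring that the passage from \emph{centralizing the connected subgroup} $C(\h)^o$ to \emph{acting trivially on its Lie algebra} $\h$ is exactly the content of Lemma~\ref{normycen} (so that no analytic input is needed), and that trivial action on $\h$ fixes each root, which is transparent from the definition of the root action.
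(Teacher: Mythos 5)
Your proof is correct and takes essentially the same approach as the paper: the first assertion is proved identically (Lemma~\ref{Lie-of-C-h} plus Proposition~\ref{defgpLA}), and the second by locating the unique Cartan subgroup $H$ containing the Carter subgroup $C(\h)^o$ via Remark~\ref{Carter-Cartan}, showing $H\subseteq C(\h)$, and concluding by maximal nilpotency. The only difference is presentational: where you verify the key inclusion $C_{G}(C(\h)^o)\subseteq C(\h)$ inline through Lemma~\ref{normycen} (trivial action on $\h_K$ fixes every root), the paper simply cites Proposition~\ref{NCar1}, whose proof consists of exactly that computation.
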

\begin{proof}
By Lemma\,\ref{Lie-of-C-h} $\Lie(C(\h)^o)=\h$. The latter being a Cartan subalgebra of $\g$, we get that $C(\h)^o$ is a Carter subgroup of $G$ by Proposition\,\ref{defgpLA}. Then, by Remark\,\ref{Carter-Cartan} there is a unique Cartan subgroup $H$ of $G$ containing $C(\h)^o$, and $H^o=C(\h)^o$.
 By  Proposition\,\ref{NCar1}
we have that $H\leq C(\h)$, so if $C(\h)$ is nilpotent, then by maximality
of $H$ we get that $H=C(\h)$. 
\end{proof}
By the above, to show that the concepts of Cartan and $C$-Cartan subgroups of $G$ coincide, it remains to prove that the $C$-Cartan subgroup are nilpotent. We do that  by {reducing the problem to the linear case (see Remark \ref{outlineNeeb} below).}

Consider $\Ad:G\rightarrow\Aut(\g_{K})$. The Lie algebra of $\Aut(\g_{K})\subseteq \GL(\g_{K})$
is $Der(\g_{K})$, which is a subset of $\mathfrak{gl}(\g_{K})$.
Let $G_{1}:=\zar{\Ad(G)}{K}$. Hence $\g_{1}\subseteq Der(\g_{K})\subseteq\mathfrak{gl}(\g_{K})$. 

Given a subalgebra $\mathfrak{b}$ of $\mathfrak{gl}(\g_{K})$, let us consider   
$\mathfrak{b}_{s}:=\left\{ X_{s}:X\in\mathfrak{b}\right\} $ and 
$\mathfrak{b}_{n}:=\left\{ X_{n}:X\in\mathfrak{b}\right\} $, where $X_s$ and $X_n$ denote the semisimple and nilpotent parts of the endomorphism $X$ in the additive Jordan decomposition. We recall that if $\mathfrak{b}$ is an algebraic Lie algebra then
$\mathfrak{b}_{s}$, $\mathfrak{b}_{n}\subseteq\mathfrak{b}$ (see
\cite[Ch.V Thm.2.3]{Hochschild81AlgGroups}). 

\begin{remark}\label{outlineNeeb}{In the context of Lie groups, H.K. Neeb shows that $C(\h)$ is nilpotent by proving that $\Ad(C(\h))$ is contained in the Zariski closure $\zar{\text{exp}(\ad(\h)) }{}\sub \Aut(\g_\mathbb{C})$,  which is a Cartan subgroup of $G_1$. For, A. Borel \cite[12.6]{Borel91LinearAlgGroups} proves that Cartan subgroups of $G_1$ are exactly the centralizers of maximal tori. On the other hand, $\zar{\text{exp}(\ad(\h)) }{}$ turns out to be $Z_{G_1}(T)$ where $T=\zar{\text{exp}(\{(\ad X)_s:X\in \h\})}{}$ is a maximal complex torus of $G_1$.}

{We will avoid the use of the exponential map in the following way. Denote $\n:=\ad(\h_{K})$. We first show that the Zariski closure of $\Ad(C(\h))$ inside $G_{1}$ coincides with the centralizer $Z_{G_{1}}(\n_{s})$. In view of the argument in the paragraph above, $Z_{G_{1}}(\n_{s})$ should be a Cartan subgroup of $G_{1}$. Indeed, using results of abelian algebraic groups, we show that it is irreducible, nilpotent and equals the irreducible component of its normalizer. This is enough since we work in an algebraically closed field.}
\end{remark}

{We will use the following characterization of C-Cartan groups whose proof can be carried out word by word in our context.}

\begin{fact}[{\cite[Lem. A.2]{Neeb96Weaklyexp}}]
\label{NCartan}Let $\h$ be a Cartan subalgebra of $\g$. Then
\[
C(\h)=\{g\in N_{G}(\h):\Ad(g)\ad(X)_{s}=\ad(X)_{s}\Ad(g)\text{ for all }X\in\h\}.
\]
(or alternatively for all $X\in\h_{K}$). 
\end{fact}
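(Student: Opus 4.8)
The plan is to reformulate membership in $C(\h)$ in terms of the behaviour of $\Ad(g)$ on the root space decomposition $\g_K=\bigoplus_{\lambda\in\Lambda}\g_K^\lambda(\h_K)$, and then to identify the semisimple parts $\ad(X)_s$ as precisely those operators that are scalar on each root space. First I would record how $N_G(\h)$ permutes the root spaces. For $g\in N_G(\h)$ the map $\Ad(g)$ is a Lie algebra automorphism of $\g_K$ preserving $\h_K$, so $\Ad(g)\ad(Z)\Ad(g)^{-1}=\ad(\Ad(g)Z)$ for $Z\in\h_K$. A direct computation (along the lines of the remark preceding Definition \ref{CCartan}) then shows that $\Ad(g)$ maps $\g_K^\lambda(\h_K)$ into $\g_K^{\lambda\circ\Ad(g)^{-1}}(\h_K)$: if $(\ad(Z)-\lambda(Z)\id)^nX=0$ for all $Z\in\h_K$, then applying $\Ad(g)$ and substituting $W=\Ad(g)^{-1}Z\in\h_K$ yields $(\ad(Z)-\lambda(\Ad(g)^{-1}Z)\id)^n\Ad(g)X=0$. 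Since $\Ad(g)$ permutes the finite set $\Lambda$, and $g$ and $g^{-1}$ therefore fix the same roots, it follows at once from Definition \ref{CCartan} that $g\in C(\h)$ if and only if $\Ad(g)$ preserves each root space $\g_K^\lambda(\h_K)$.

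Next I would describe $\ad(X)_s$ through the decomposition. For $X\in\h_K$ the operator $\ad(X)$ preserves each $\g_K^\lambda(\h_K)$ and acts there with the single generalized eigenvalue $\lambda(X)$; hence its semisimple part acts on $\g_K^\lambda(\h_K)$ as the scalar $\lambda(X)$, so that $\ad(X)_s=\sum_{\lambda\in\Lambda}\lambda(X)\pi_\lambda$, where $\pi_\lambda$ denotes the projection of $\g_K$ onto $\g_K^\lambda(\h_K)$ along the remaining root spaces. In particular $X\mapsto\ad(X)_s$ is $K$-linear on $\h_K$, which is what allows one to pass freely between ``for all $X\in\h$'' and ``for all $X\in\h_K$''.

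It then remains to prove the equivalence that $\Ad(g)$ preserves every root space if and only if $\Ad(g)$ commutes with $\ad(X)_s$ for all $X\in\h_K$. For the forward direction, preserving each $\g_K^\lambda(\h_K)$ means $\Ad(g)$ commutes with each projection $\pi_\lambda$, hence with every $\ad(X)_s=\sum_\lambda\lambda(X)\pi_\lambda$. For the converse, suppose $\Ad(g)$ commutes with all $\ad(X)_s$ and take $0\neq v\in\g_K^\lambda(\h_K)$; writing $\mu=\lambda\circ\Ad(g)^{-1}$ we have $\Ad(g)v\in\g_K^\mu(\h_K)$, and comparing $\ad(X)_s\Ad(g)v=\mu(X)\Ad(g)v$ with $\Ad(g)\ad(X)_s v=\lambda(X)\Ad(g)v$ forces $\lambda(X)=\mu(X)$ for every $X\in\h_K$, i.e.\ $\mu=\lambda$, so $\Ad(g)$ preserves $\g_K^\lambda(\h_K)$. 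Combined with the reduction of the first paragraph this gives $C(\h)=\{g\in N_G(\h):\Ad(g)\ad(X)_s=\ad(X)_s\Ad(g)\text{ for all }X\in\h\}$.

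I expect the main technical point to be the claim that the semisimple part of $\ad(X)$ is exactly the scalar $\lambda(X)$ on each root space, and that consequently $X\mapsto\ad(X)_s$ is linear; the subtlety is that Jordan decomposition is not additive in general, so this must be deduced from the simultaneous weight-space decomposition of the nilpotent algebra $\ad(\h_K)$ rather than from any naive linearity. Once this is in place the equivalence is a clean eigenvalue comparison, and the naturality $\Ad(g)\ad(X)_s\Ad(g)^{-1}=\ad(\Ad(g)X)_s$ of Jordan decomposition under the automorphism $\Ad(g)$ provides an alternative route to the converse direction should one wish to bypass the projections $\pi_\lambda$.
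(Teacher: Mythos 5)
Your proof is correct and follows essentially the intended route: the paper gives no written proof of Fact~\ref{NCartan}, citing \cite[Lem.~A.2]{Neeb96Weaklyexp} with the remark that Neeb's argument transfers word for word, and that argument is exactly yours — identify $\ad(X)_{s}$ as the scalar $\lambda(X)$ on each root space $\g_{K}^{\lambda}(\h_{K})$ via uniqueness of the Jordan decomposition (which also yields the linearity of $X\mapsto\ad(X)_{s}$ needed to pass between $\h$ and $\h_{K}$), and then compare eigenvalues to show that commuting with all $\ad(X)_{s}$ is equivalent to fixing every root. Your correctly flagged subtlety (that linearity of the semisimple part must come from the simultaneous weight-space decomposition of the nilpotent algebra $\ad(\h_{K})$, not from naive additivity of Jordan decomposition) is indeed the one technical point, and it is purely linear-algebraic over $K=R[i]$, so it goes through verbatim in the definable setting as the paper asserts.
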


\begin{prop}
\label{centraliserCSA}Let $\h$ be a Cartan subalgebra of $\g$.
Then $\mathfrak{z}_{\g_{1}}(\mathfrak{n}_{s})$ is a Cartan subalgebra
of $\g_{1}$, where $\mathfrak{n}:=$ \textup{$\ad(\h_{K})$. Moreover
$\z_{\g_{1}}(\n_{s})=a(\n)$.}
\end{prop}
\begin{proof}
We first note that $\ad(\g_{K})\subseteq\g_{1}$. Indeed, take $X\in\g$
and consider 
\[
\ad(X):\g_{K}\rightarrow\g_{K}:Y\mapsto[X,Y]\in Der(\g_{K}).
\]
We have that $\Ad:G\rightarrow \Ad(G)\subseteq G_{1}\subseteq\Aut(\g_{K})$
so that the derivative is $\ad:\g\rightarrow\g_{1}\subseteq Der(\g_{K})$,
complexifing we get $\ad:\g_{K}\rightarrow\g_{1}\subseteq Der(\g_{K}).$

Now, since $\h$ is a Cartan subalgebra of $\g$, $\n$
is a Cartan subalgebra of $\ad(\g_{K})$, in particular $\n\subseteq\mathfrak{gl}(\g_{K})$
is nilpotent. By \cite[I.8(iv)]{Neeb94Closedness} we have that the
subalgebra $\mathfrak{n}_{s}$ of $\g_{1}\subseteq\mathfrak{gl}(\g_{K})$
is abelian, commutes with $\n$, and 
\[
a(\mathfrak{n})=\mathfrak{n}+a(\mathfrak{n}_{s}).
\]

Now, \cite[I.9]{Neeb94Closedness} says  that $a(\mathfrak{n})$
is a Cartan subalgebra of $a(\ad(\g_{K}))=\g_{1}$, thus $a(\n)=\z_{\g_{1}}(a(\n)_{s})$
by \cite[Ch.VII \S\,5 Prop.6]{BourbLALG7-9}.

On the other hand, since $\n$ commutes with $\n_{s}$ it also commutes
with $a(\n_{s})$. Hence 
$a(\mathfrak{n})_{s}=(\mathfrak{n}+a(\mathfrak{n}_{s}))_{s}=\n_{s}+a(\n_{s})_{s}=a(\mathfrak{n}_{s})_{s}.$
Thus, 
$$a(\n)=\z_{\g_{1}}(a(\n)_{s})=\z_{\g_{1}}(a(\n_{s})_{s})\supseteq\z_{\g_{1}}(a(\n_{s}))=\z_{\g_{1}}(\n_{s}).$$

Finally, we have that $[\n,\n_{s}]=0$ and so $a(\n)\subseteq\z_{\g_{1}}(\n_{s})$.
Therefore, we have that $\z_{\g_{1}}(\n_{s})=a(\n)$ is a Cartan subalgebra
of $\g_{1}$.
\end{proof}
\begin{cor}
\label{centralizerCartansubgroup}Let $\h$ be a Cartan subalgebra
of $\g$. Then $Z_{G_{1}}(\n_{s})$ is a Cartan subgroup of $G_{1}$,
where $\mathfrak{n}$ is \textup{$\ad(\h_{K})$.}
\end{cor}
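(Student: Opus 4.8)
The plan is to first pin down the Lie algebra of $Z_{G_{1}}(\n_{s})$ and then to recognise this group as the centraliser of a torus, so that its being a Cartan subgroup falls out of the results already assembled. Applying Remark~\ref{norcen} to the algebraic group $G_{1}\le\GL(\g_{K})$ and to the subspace spanned by $\n_{s}$ (note $Z_{G_{1}}(\n_{s})$ coincides with the centraliser of this span), we obtain $\Lie(Z_{G_{1}}(\n_{s}))=\z_{\g_{1}}(\n_{s})$, which by Proposition~\ref{centraliserCSA} equals $a(\n)$, a Cartan subalgebra of $\g_{1}$.

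Next I would produce the torus doing the centralising. By Proposition~\ref{centraliserCSA} the set $\n_{s}$ is abelian and, being the set of semisimple parts of $\n$, consists of semisimple endomorphisms; hence its algebraic hull $a(\n_{s})\sub\g_{1}$ is a toral subalgebra, and I let $T\le G_{1}$ be the connected algebraic subgroup with $\Lie(T)=a(\n_{s})$, which is therefore a torus. I claim $Z_{G_{1}}(T)=Z_{G_{1}}(\n_{s})$. Since $T$ is connected, Lemma~\ref{normycen} gives $Z_{G_{1}}(T)=Z_{G_{1}}(a(\n_{s}))$, and the inclusion $Z_{G_{1}}(a(\n_{s}))\sub Z_{G_{1}}(\n_{s})$ is clear from $\n_{s}\sub a(\n_{s})$. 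For the reverse inclusion, for $g\in Z_{G_{1}}(\n_{s})$ the fixed space $\{X\in\g_{1}:\Ad(g)X=X\}$ is exactly the Lie algebra $\Lie(C_{G_{1}}(g))$ of the centraliser of $g$, hence an algebraic subalgebra of $\gl(\g_{K})$ containing $\n_{s}$; by minimality of the algebraic hull it contains $a(\n_{s})$, so $g\in Z_{G_{1}}(a(\n_{s}))$.

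With this identification the conclusion is quick. The centraliser of a torus in a connected algebraic group over the algebraically closed field $K$ is connected, so $Q:=Z_{G_{1}}(\n_{s})=Z_{G_{1}}(T)$ is connected; since $\Lie(Q)=a(\n)$ is a Cartan subalgebra of $\g_{1}$, Proposition~\ref{defgpLA}(3) shows that $Q$ is a Carter subgroup of $G_{1}$. Finally, because $T\sub Z_{G_{1}}(T)=Q$, any element centralising $Q$ centralises $T$, so $C_{G_{1}}(Q)\sub Z_{G_{1}}(T)=Q$ and thus $C_{G_{1}}(Q)Q=Q$. By Remark~\ref{Carter-Cartan} the unique Cartan subgroup of $G_{1}$ containing the Carter subgroup $Q$ is $C_{G_{1}}(Q)Q=Q$, so $Z_{G_{1}}(\n_{s})$ is itself a Cartan subgroup of $G_{1}$, as required.

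I expect the main obstacle to be the two algebraic-group inputs used above: that the algebraic hull $a(\n_{s})$ of a commuting family of semisimple endomorphisms is toral (so that $T$ is genuinely a torus), and that the centraliser of a torus is connected. Both are standard over an algebraically closed field and are presumably the ``results of abelian algebraic groups'' alluded to in Remark~\ref{outlineNeeb}. The one point needing care is that $T$ actually lies in $G_{1}$, which holds because $a(\n_{s})\sub a(\n)=\z_{\g_{1}}(\n_{s})\sub\g_{1}$, so that $T$ is the connected algebraic subgroup of $G_{1}$ with Lie algebra $a(\n_{s})$.
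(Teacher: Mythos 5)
Your argument is correct in substance and shares the paper's central mechanism: both proofs realise $Z_{G_1}(\n_s)$ as the centraliser of a torus, and your minimality trick -- for $f\in Z_{G_1}(\n_s)$ the fixed space $\{X\in\g_1:\Ad(f)X=X\}=\Lie(Z_{G_1}(f))$ is an algebraic subalgebra containing $\n_s$, hence containing the algebraic hull -- is exactly the paper's first step (there used to show $Z_{G_1}(\n_s)=Z_{G_1}(a(\n)_s)$). You diverge at two points. First, you centralise $a(\n_s)$ itself, asserting it is toral; the paper is more cautious: it writes $a(\n_s)=\Lie(A)$, splits $A=A_u\times A_s$ by \cite[4.7]{Borel91LinearAlgGroups}, identifies $a(\n)_s=a(\n_s)_s=\Lie(A_s)$, and centralises only the torus $A_s$, thereby never needing torality of the hull. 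Your shortcut is legitimate -- $\n_s$ is a commuting family of semisimple endomorphisms, so it lies in a diagonal (hence algebraic) subalgebra of $\gl(\g_K)$, and therefore so does $a(\n_s)$ -- but since this is what makes $T$ a torus, it deserves the one-line justification rather than a bare appeal to ``standard''. Second, your endgame is genuinely different and stays inside the paper's o-minimal toolkit: from $\Lie(Q)=a(\n)$ you get that $Q$ is a Carter subgroup via Proposition \ref{defgpLA}(3), and then close with Remark \ref{Carter-Cartan} through $C_{G_1}(Q)\sub Z_{G_1}(T)=Q$. The paper instead verifies Borel's criterion \cite[12.6]{Borel91LinearAlgGroups}, checking that $Z_{G_1}(\n_s)$ is irreducible, nilpotent, and equal to the irreducible component of its normaliser. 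Your finish is shorter; the paper's is purely algebraic-geometric in its final verification, and that choice is not accidental, as the next point shows.

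The one step you have not justified is the silent identification of two notions of connectedness. ``Connected'' in \cite[11.12]{Borel91LinearAlgGroups} means Zariski-connected (irreducible), whereas Carter subgroups are by definition \emph{definably} connected, so applying Proposition \ref{defgpLA}(3) and Remark \ref{Carter-Cartan} to $Q=Z_{G_1}(T)$ requires knowing that an irreducible algebraic group over $K=R[i]$ is definably connected. This is true (for $\C$ it is classical, and it transfers to arbitrary $K=R[i]$ since the number of definably connected components is uniformly definable in the parameters), but it is proved nowhere in the paper, and the paper's final paragraph is engineered precisely to avoid it: it works with the o-minimal component $Z_{G_1}(\n_s)^o$, shows \emph{that} is a Carter subgroup, and then passes to Zariski closures, using irreducibility and Proposition \ref{zarcl} to get $\zar{Z_{G_1}(\n_s)^o}{K}=Z_{G_1}(\n_s)$, so that all three conditions of \cite[12.6]{Borel91LinearAlgGroups} are checked in the Zariski category. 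To make your route airtight, either cite the connectedness transfer explicitly, or mimic the paper's manoeuvre: run your Carter argument on $Q^o$, note that irreducibility of $Q$ forces $Q=\zar{Q^o}{K}$ (so $Q$ is nilpotent), observe that $T\cong(K^*)^d$ is definably connected so $T\sub Q^o$, whence $C_{G_1}(Q^o)\sub Z_{G_1}(T)=Q$, and conclude that any maximal nilpotent subgroup containing $Q$ equals $Q^oC_{G_1}(Q^o)\sub Q$ by Fact \ref{Cartan1}(2) and Remark \ref{Carter-Cartan}.
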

\begin{proof}{First recall that for algebraic groups being irreducible is the same as being (Zariski) connected, and that $G_1$ is a irreducible algebraic subgroup of $\GL(n,K)$. Thus, by \cite[12.6]{Borel91LinearAlgGroups} it is enough to show that $Z_{G_{1}}(\n_{s})$ is irreducible, nilpotent and equal to the irreducible component of its normalizer.}

First we check that\emph{ $Z_{G_{1}}(\mathfrak{n}_{s})=Z_{G_{1}}(a(\mathfrak{n})_{s})$.}\,Clearly, $Z_{G_{1}}(a(\mathfrak{n})_{s})\subseteq Z_{G_{1}}(\mathfrak{n}_{s})$
because $\n_{s}\subseteq a(\n)_{s}$. For each $f\in G_{1}$, consider
the \emph{algebraic} subgroup $Z_{G_{1}}(f)=\{X\in G_{1}:fX=Xf\}$
of   $G_{1}$, then 
$
\Lie(Z_{G_{1}}(f))=\{X\in\g_{1}:fX=Xf\}.
$
Assume $f\in Z_{G_{1}}(\n_{s})$. Then we have that $\n_{s}\subseteq \Lie(Z_{G_{1}}(f))$
and therefore $a(\n_{s})\subseteq \Lie(Z_{G_{1}}(f))$. In particular
$a(\n)_{s}=a(\n_{s})_{s}\subseteq \Lie(Z_{G_{1}}(f))$ and therefore
$f\in Z_{G_{1}}(a(\n)_{s})$.


Let us prove that $Z_{G_{1}}(a(\mathfrak{n})_{s})$ is {irreducible}.
First note that since $a(\n_{s}$) is abelian and algebraic $a(\n_{s})=\Lie(A)$
where $A\leq \GL(\g_{K})$ is an abelian algebraic group. By \cite[4.7]{Borel91LinearAlgGroups}
$A=A_{u}\times A_{s}$, with the unipotent part $A_{u}$ and the semisimple part $A_{s}$   algebraic (and
abelian). Hence $a(\n_{s})=\Lie(A)=\Lie(A_{s})+\Lie(A_{u})$. Then $(a(\n)_{s}=)\,a(\n_{s})_{s}$
is $\Lie(A_{s})$. Since $A_{s}$ is abelian and consists of semisimple
elements (of $\GL(\g_{K})$) by \cite[8.4]{Borel91LinearAlgGroups}
is a subtorus of $G_{1}$. By \cite[11.12]{Borel91LinearAlgGroups}
$Z_{G_{1}}(A_{s})$ is {irreducible}. Now $Z_{G_{1}}(A_{s})=Z_{G_{1}}(\Lie(A_{s}))$
and $\Lie(A_{s})$ is $a(\n)_{s}$. Therefore, $Z_{G_{1}}(a(\mathfrak{n})_{s})$
is {irreducible}.%

{Finally, note that $\Lie(Z_{G_{1}}(\mathfrak{n}_{s})^o)=\z_{\g_{1}}(\n_{s})$ and therefore by Propositions\,\ref{centraliserCSA} and \ref{defgpLA} $Z_{G_{1}}(\mathfrak{n}_{s})^o$ is a Carter subgroup of $G_1$. It follows that $Z_{G_{1}}(\mathfrak{n}_{s})^o$ is nilpotent and $N_{G_1}(Z_{G_{1}}(\mathfrak{n}_{s}))^o=Z_{G_{1}}(\mathfrak{n}_{s})^o$. On the other hand, by Proposition \ref{zarcl} the nilpotent subgroup $\zar{Z_{G_{1}}(\mathfrak{n}_{s})^o}{K}$ is clearly the irreducible component of $Z_{G_{1}}(\mathfrak{n}_{s})$, and so $\zar{Z_{G_{1}}(\mathfrak{n}_{s})^o}{K}=Z_{G_{1}}(\mathfrak{n}_{s})$. Moreover,   
$$Z_{G_{1}}(\mathfrak{n}_{s})=\zar{Z_{G_{1}}(\mathfrak{n}_{s})^o}{K}=\zar{N_{G_1}(Z_{G_{1}}(\mathfrak{n}_{s}))^o}{K}$$ is the irreducible component of $N_{G_1}(Z_{G_{1}}(\mathfrak{n}_{s}))$, as required}.

\end{proof}
\begin{thm}
\label{Cartan=C-Cartan}The concepts of Cartan subgroup
and $C$-Cartan subgroup of a definably connected group $G$ coincide. 
\end{thm}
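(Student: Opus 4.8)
The plan is to reduce the whole theorem to the single remaining assertion that, for every Cartan subalgebra $\h$ of $\g$, the $C$-Cartan subgroup $C(\h)$ is nilpotent. Once this is in hand, both inclusions follow immediately from what has already been proved. If $C(\h)$ is nilpotent, Proposition \ref{NCar2} directly gives that $C(\h)$ is a Cartan subgroup, so every $C$-Cartan subgroup is a Cartan subgroup. Conversely, if $H$ is a Cartan subgroup, then by Proposition \ref{NCar1} its Lie algebra $\h$ is a Cartan subalgebra and $H\leq C(\h)$; since $H$ is maximal among nilpotent subgroups of $G$ and $C(\h)$ is nilpotent, we get $H=C(\h)$, and then $H$ is a $C$-Cartan subgroup by Corollary \ref{C-Cartan}.

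The heart of the argument is therefore the nilpotency of $C(\h)$, which I would obtain by pushing the problem into the linear group $G_{1}=\zar{\Ad(G)}{K}$ via the adjoint representation. Writing $\n:=\ad(\h_{K})$, the characterization of Fact \ref{NCartan} says precisely that $g\in C(\h)$ forces $\Ad(g)$ to commute with $\ad(X)_{s}$ for every $X\in\h$, that is, with every element of $\n_{s}=\{(\ad X)_{s}:X\in\h_{K}\}$. As $\Ad(g)\in\Ad(G)\subseteq G_{1}$, this shows $\Ad(C(\h))\subseteq Z_{G_{1}}(\n_{s})$. By Corollary \ref{centralizerCartansubgroup} the subgroup $Z_{G_{1}}(\n_{s})$ is a Cartan subgroup of $G_{1}$, hence nilpotent, and therefore $\Ad(C(\h))$ is nilpotent.

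It then remains to lift nilpotency from $\Ad(C(\h))$ back to $C(\h)$ itself. Applying Lemma \ref{normycen} with $H=G$ (here $G$ is definably connected) gives $Z_{G}(\g)=Z_{G}(G)=Z(G)$, so the kernel of $\Ad$ on $G$ is $Z(G)$. Hence $\Ad$ restricts to a surjection $C(\h)\to\Ad(C(\h))$ whose kernel $C(\h)\cap Z(G)$ is central in $C(\h)$. Since a central extension of a nilpotent group is nilpotent (if $\Ad(C(\h))$ has nilpotency class $k$, then $\gamma_{k+1}(C(\h))$ falls into the central kernel, whence $\gamma_{k+2}(C(\h))=1$), we conclude that $C(\h)$ is nilpotent, which is exactly the missing ingredient.

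I expect the genuine difficulty to lie in the structural chain feeding into Corollary \ref{centralizerCartansubgroup}, namely the identification $Z_{G_{1}}(\n_{s})=a(\n)$ as a Cartan subalgebra of $\g_{1}$ together with the irreducibility, nilpotency and self-normalizing properties of the corresponding algebraic subgroup; but that work is already carried out in Proposition \ref{centraliserCSA} and Corollary \ref{centralizerCartansubgroup}, so I may cite it. Given those results, the only genuinely new points are the clean inclusion $\Ad(C(\h))\subseteq Z_{G_{1}}(\n_{s})$ extracted from Fact \ref{NCartan} and the elementary central-extension argument transferring nilpotency downward; the subtle step there is to verify that $\ker\Ad$ is truly central in $C(\h)$, which is precisely what the computation $\ker\Ad=Z(G)$ via Lemma \ref{normycen} supplies.
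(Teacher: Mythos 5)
Your proposal is correct and follows essentially the same route as the paper's own proof: both reduce the theorem to the nilpotency of $C(\h)$, obtain $\Ad(C(\h))\subseteq Z_{G_{1}}(\n_{s})$ from Fact \ref{NCartan} together with Corollary \ref{centralizerCartansubgroup}, pull nilpotency back through the central kernel $\ker\Ad=Z(G)$, and then settle both inclusions via Propositions \ref{NCar1} and \ref{NCar2} and maximal nilpotency. Your only additions are to spell out details the paper leaves implicit (the lower central series argument for central extensions, and the identification $\ker\Ad=Z(G)$ via Lemma \ref{normycen}), which are fine.
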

\begin{proof}
We first prove that every $C$-Cartan subgroup of $G$ is a Cartan
subgroup of $G$. By definition a $C$-Cartan is of the form $C(\h)$
for some Cartan subalgebra $\h$ of $\g$. By Corollary\,\ref{centralizerCartansubgroup}
$Z_{G_{1}}((\ad(\h_{K}))_{s})$ is a nilpotent subgroup of $G_{1}$($=\zar{\Ad(G)}{K}\sub Aut(\g_K)$). Now 
\begin{equation*}
\begin{split}
Z_{G_{1}}((\ad(\h_K)_s) &:=\{f\in G_{1}:\Ad(f)(\ad(X)_{s})={\ad(X)_{s}},\text{ for all }X\in\h_{k}\}\\
&\;= \{f\in G_{1}:f\circ \ad(X)_{s}=\ad(X)_{s}\circ f,\text{ for all }X\in\h_{k}\}
\end{split}
\end{equation*}
since in the linear case $\Ad$ is the usual conjugation. Hence $\Ad(C(\h))=\{\Ad(g):g\in C(\h)\}\subseteq Z_{G_{1}}((\ad(\h_{K}))_{s})$
by Fact \ref{NCartan}, and so $\Ad(C(\h))$ is nilpotent and thus so
is $C(\h)$ since the kernel of $\Ad$ is the centre of $G$. By Proposition\,\ref{NCar2}
$C(\h)$ is a Cartan subgroup of $G$.

Conversely, by Proposition\,\ref{NCar1} if $H$ is a Cartan subgroup of $G$,
then $H$ is contained in $C(\h)$ where $\h$ is the Lie algebra
of $H^o$. On the other hand $C(\h)$ is a Cartan subgroup, so it
must coincide with $H$. 
\end{proof}
 As we have mentioned, in our definable context we do not have  an analogue to the Lie correspondence. However, the last theorem  implies that  Cartan subalgebras can be realised as Lie algebras of Cartan subgroups,   as follows.

\begin{cor}\label{Liecorr}
Let $G$ be a definably connected group and $\h$ be a Cartan
subalgebra of $\g$. Then, there is a unique Cartan subgroup $H$ of
$G$ with Lie algebra $\h$. 
\end{cor}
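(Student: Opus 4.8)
The plan is to read off both existence and uniqueness directly from Theorem \ref{Cartan=C-Cartan}, which identifies Cartan subgroups with $C$-Cartan subgroups. Since that identification is exactly the hard input, the corollary should require essentially no further work: the candidate subgroup will be the $C$-Cartan $C(\h)$ attached to $\h$, and uniqueness will come from maximal nilpotency together with Proposition \ref{NCar1}.

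For existence, I would simply set $H := C(\h)$. By Lemma \ref{Lie-of-C-h} we have $\Lie(C(\h)) = \h$, and by Theorem \ref{Cartan=C-Cartan} the $C$-Cartan subgroup $C(\h)$ is a genuine Cartan subgroup of $G$. Hence $H = C(\h)$ is a Cartan subgroup whose Lie algebra is precisely $\h$, which is all that is needed.

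For uniqueness, suppose $H$ is any Cartan subgroup of $G$ with $\Lie(H) = \h$; note $\Lie(H^o) = \Lie(H) = \h$ since $H^o$ has finite index in $H$. By Proposition \ref{NCar1} applied to $H$, the group $H$ is contained in the $C$-Cartan subgroup $C(\h)$ determined by $\h = \Lie(H^o)$. Again by Theorem \ref{Cartan=C-Cartan}, $C(\h)$ is a Cartan subgroup, hence in particular a nilpotent subgroup of $G$. Since a Cartan subgroup is by definition a \emph{maximal} nilpotent subgroup, the inclusion $H \subseteq C(\h)$ into the nilpotent group $C(\h)$ forces $H = C(\h)$. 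Therefore every Cartan subgroup with Lie algebra $\h$ coincides with $C(\h)$, giving uniqueness.

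I expect no real obstacle at this stage: all the substantive content -- the nilpotency of $C$-Cartan subgroups via reduction to the linear case and the algebraic-group arguments of Proposition \ref{centraliserCSA} and Corollary \ref{centralizerCartansubgroup} -- has already been absorbed into Theorem \ref{Cartan=C-Cartan}. The only point to be careful about is that both halves genuinely use the equivalence of the two notions of Cartan subgroup (for existence to guarantee $C(\h)$ is Cartan, and for uniqueness to guarantee $C(\h)$ is nilpotent), so the proof is just a clean bookkeeping of Lemma \ref{Lie-of-C-h}, Proposition \ref{NCar1}, and Theorem \ref{Cartan=C-Cartan}.
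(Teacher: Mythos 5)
Your proof is correct, and the existence half is exactly the paper's: take $H:=C(\h)$, cite Lemma~\ref{Lie-of-C-h} for $\Lie(C(\h))=\h$, and Theorem~\ref{Cartan=C-Cartan} for $C(\h)$ being a genuine Cartan subgroup. The uniqueness half, however, goes by a different route. The paper argues: if $H_1$ and $H_2$ are Cartan subgroups with $\Lie(H_1)=\Lie(H_2)$, then $H_1^o$ and $H_2^o$ are Carter subgroups that are definably connected with the same Lie algebra, hence coincide, and then $H_1=H_2$ by Remark~\ref{Carter-Cartan} (equivalently Fact~\ref{Cartan1}(2)). Note this argument does not touch the $C$-Cartan machinery at all; it rests on the rigidity of definably connected definable subgroups (they are determined by their Lie algebras, as used already in Lemma~\ref{normycen}) together with the fact that a Carter subgroup lies in a unique Cartan subgroup. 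Your argument instead pins every Cartan subgroup $H$ with $\Lie(H)=\h$ to the canonical group $C(\h)$ via Proposition~\ref{NCar1} and maximal nilpotency of $H$ against the nilpotent group $C(\h)$; this is sound, but observe that it is precisely the converse direction of Theorem~\ref{Cartan=C-Cartan} re-run (the theorem's proof already shows any Cartan subgroup equals $C(\Lie(H))$), so you are re-deriving rather than citing that half. The trade-off: the paper's uniqueness step is logically lighter and independent of the characterisation theorem, while yours has the small merit of exhibiting the unique Cartan subgroup explicitly as $C(\h)$ instead of only establishing the abstract coincidence of any two candidates.
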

\begin{proof}
Take $H:=C(\h)$ and use Lemma\,\ref{Lie-of-C-h}. For the  unicity clause, note that if $H_1$ and $H_2$ are Cartan subgroups of $G$ and $\Lie(H_1)=\Lie(H_2)$ then the corresponding Carters $H_1^o$ and $H_2^o$ coincide, thus  $H_1=H_2$ by Remark\,\ref{Carter-Cartan}.
\end{proof}
{We finish this section with a result from \cite{Hofmann92NearCartan} which will be useful in the study of the regular points.}
\begin{prop}\label{prop:reginCartan}Let $g\in G$ be  such that $$\g^1(\Ad(g)):=\{X\in\g\st(\Ad(g)-\id)^n(X)=0,\text{ for some }n\in\N\}$$is a Cartan subalgebra of $\g$. Then $g\in C(\g^1(\Ad(g)))$.
\end{prop}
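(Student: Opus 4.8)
Write $\h := \g^1(\Ad(g))$, which is a Cartan subalgebra of $\g$ by hypothesis. By Definition \ref{CCartan} it suffices to verify the two conditions defining membership in $C(\h)$: that $g \in N_G(\h)$, and that $\lambda \circ \Ad(g)|_{\h_K} = \lambda$ for every root $\lambda \in \Lambda(\g,\h)$. The first is immediate, and I would dispatch it right away: $\h$ is by construction the generalized $1$-eigenspace of the endomorphism $\Ad(g)$ of $\g$, and such a generalized eigenspace is invariant under the operator defining it (if $(\Ad(g)-\id)^nX=0$ then $(\Ad(g)-\id)^n\Ad(g)X=\Ad(g)(\Ad(g)-\id)^nX=0$), so $\Ad(g)(\h)\sub\h$ and hence $g\in N_G(\h)$ by Remark \ref{norcen}.

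For the second condition the plan is to exploit the action of $N_G(\h)$ on the finite root set $\Lambda:=\Lambda(\g,\h)$ introduced in the Remark preceding Definition \ref{CCartan}, namely $\lambda\mapsto\lambda\circ\Ad(g)|_{\h_K}$. The crucial observation is that $u:=\Ad(g)|_{\h_K}$ is \emph{unipotent}: since $\h=\g^1(\Ad(g))$ is finite dimensional, $(\Ad(g)-\id)|_{\h}$ is nilpotent, hence so is $(u-\id)$ after complexifying. Consequently the induced map on the dual, $u^\ast\colon\h_K^\ast\to\h_K^\ast$, $\nu\mapsto\nu\circ u$, is again unipotent (writing $u=\id+\mathcal N$ with $\mathcal N$ nilpotent, one has $u^\ast=\id+(\nu\mapsto\nu\circ\mathcal N)$, whose second summand is nilpotent), and the action on roots is precisely the restriction of $u^\ast$ to $\Lambda\sub\h_K^\ast$.

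The heart of the matter — and the step I expect to be the main, though still short, obstacle — is the purely linear-algebraic claim that \emph{a unipotent operator permuting a finite set of vectors must fix each of them}. I would argue as follows: $u^\ast$ maps $\Lambda$ into itself, so $W:=\mathrm{span}_K(\Lambda)$ is $u^\ast$-invariant and $u^\ast|_W$ is unipotent; since the permutation that $g$ induces on the finite set $\Lambda$ has some finite order $m$, one gets $(u^\ast|_W)^m=\id_W$, because $(u^\ast)^m$ fixes every element of the spanning set $\Lambda$. Thus the minimal polynomial of $u^\ast|_W$ divides $x^m-1$, which has distinct roots in characteristic $0$, so $u^\ast|_W$ is semisimple; being simultaneously unipotent, it equals $\id_W$. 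Hence $\lambda\circ\Ad(g)|_{\h_K}=u^\ast(\lambda)=\lambda$ for all $\lambda\in\Lambda$, which, combined with $g\in N_G(\h)$, yields $g\in C(\h)$, as desired. It is worth stressing that it is exactly the hypothesis $\h=\g^1(\Ad(g))$ — rather than $\h$ being an arbitrary Cartan subalgebra normalized by $g$ — that feeds in the unipotence of $\Ad(g)|_{\h_K}$ making this argument run.
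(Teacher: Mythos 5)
Your proof is correct and follows essentially the same route as the paper's: both exploit that the induced map on $\mathrm{span}_K(\Lambda)\subseteq\h_K^{\ast}$ is simultaneously of finite order (hence diagonalizable, via $t^m-1$ having simple roots) and unipotent (via the nilpotence of $(\Ad(g)-\id)|_{\h_K}$, which the paper establishes by the same binomial computation you compress into $u^{\ast}=\id+(\nu\mapsto\nu\circ\mathcal{N})$), forcing it to be the identity. Your explicit verification that $g\in N_G(\h)$ is a welcome touch the paper leaves implicit, but it does not change the substance of the argument.
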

\begin{proof}Let $\h:=\g^1(\Ad(g))$ and let $\Lambda:=\Lambda(\g,\h)$ be the set of roots. Let $E:=Span_K\Lambda \subseteq \h^*_K$.The action of  $\Ad(g)$ on $\Lambda$ induces the obvious automorphism
$$\widetilde{\Ad(g)}:E\rightarrow E \in Aut(E)\subseteq End(E).$$
Consider $\Gamma^*$ the group generated by $\widetilde{\Ad(g)}$ in $Aut(E)$. The group $\Gamma^*$ is finite because $\Ad(g)$ is a permutation on the finite set $\Lambda$. In particular, there is $m\in \mathbb{N}$ such that $[\widetilde{\Ad(g)}]^m=\id$, so that $t^m-1$ is null on the endomorphism $\widetilde{\Ad(g)}$. Since the roots of $t^m-1$ are simple, the endomorphism  is diagonalizable.

We are going to show that $\widetilde{\Ad(g)}-\id$ is nilpotent. This means that its unique eigenvalue is $0$, and since $\widetilde{\Ad(g)}$ is diagonalizable, we deduce that $\widetilde{\Ad(g)}=\id$, so that the permutation $\Ad(g)$ is trivial as required.  Clearly $(\Ad(g)-\id)|_{\h_K}$  is nilpotent because $\h_K$ is by definition the maximal subspace of the eigenvalue $1$. In particular, given $n\in \N$ big enough and any $\lambda\in \Lambda$ we have that
$$(\widetilde{\Ad(g)}-\id)^n(\lambda)=\sum_{k=0}^n\binom{n}{k} (-1)^{n-k}\widetilde{\Ad(g)}^k(\lambda)=$$
$$=\sum_{k=0}^n\binom{n}{k} (-1)^{n-k}\lambda \circ \Ad(g)^k=\lambda \circ \big(\sum_{k=0}^n\binom{n}{k} (-1)^{n-k} \circ \Ad(g)^k\big)=$$
$$=\lambda \circ [(\Ad(g)-\id)|_{\h_K}]^n=0,$$
so $\widetilde{\Ad(g)}-\id$ is nilpotent.
\end{proof}

\section{Regular points}

We fix  a definably connected definable group  $G$, an {$n$-dimensional}  $F$-vector space $V$, where $F=R$ or  $R(i)$ as usual, and  a \emph{definable representation}  of $G$ on $V$, \emph{i.e.} a definable continuous  homomorphism $\rho:G\rightarrow \GL(V)$  \emph{e.g.},\,the  adjoint representation $\Ad:G\rightarrow \GL(\mathfrak{g})$.  
 
 For each $g\in G$ consider $$det(\rho(g)-(T+1)\id)=a_0(g)+\cdots+a_{n-1}(g)T^{n-1}+T^n.$$
The functions $a_i:G\rightarrow R$ are definable  and continuous.  Let $r$ and $r^0$ be the maps from $G$ to $\N$ defined as follows. For each $g\in G$,
$$r(g):=\min\{j:a_j(g)\neq  0\}\qquad \text{and}\qquad r^0(g):=\min\{j:(a_j)_g\neq  0\},$$
where $(a_j)_g$ denotes the germ of $a_j$ at $g$.

We recall  the following notation. For each $\lambda\in F$, 
$$V^\lambda(\rho(g)):=\{X\in V:  (\rho(g)-\lambda \id)^n(X)=0\}$$
where $n=\dim V$.

\begin{remark}\label{remarks:r} Let $g\in G$.

(1) $\dim V^\lambda(\rho(g))$ is the multiplicity of $\lambda$ as a root of the characteristic polynomial of $\rho(g)$. In particular, we have that $r(g)=\dim(V^1(\rho(g)))$, which a priori it can be zero.

(2)  There is an open neighbourhood $U\sub G$  of $g$ such that $r(h)\leq r(g)$ for all $h\in U$. Indeed, if $r(g)=r$ then $a_r(g)\neq 0$ and therefore there is an open neighbourhood $U$ of $G$ such that $a_r(h)\neq 0$ for all $h\in U$. 

(3) $r(g)\geq r^0(g)$ for all $g\in G$ because $a_j(g)\neq 0$ implies that $(a_j)_g\neq 0$.
\end{remark}

\begin{definition}
An element $g\in G$ is called \emph{regular  with respect to}  $\rho$ if $r(g)=r^0(g)$.
The set of regular points is denoted by $\Reg_\rho(G)$, which is clearly definable. We say \emph{regular point of} $G$ if $\rho$ is $\Ad$, and then we just say write $\Reg(G)$.
\end{definition}

\begin{remark}The above definition is the natural adaptation to our context of \cite[Ch.VII,\S4, Def.1]{BourbLALG7-9}, the standard reference in the literature concerning regular points. In the case of Lie groups, the coefficients are analytic and therefore if $k$ is the minimum such that $a_k\neq 0$ then  $g$ is regular if and only if $a_k(g)\neq 0$. In particular, $\Reg(G)$ is a dense subset of $G$ and $r$ is constant on $\Reg(G)$. Then, working with the exponential map, it is possible to prove that the constant value of $r$ in $\Reg(G)$ is equal to $\rk \,\g$. In fact, $g\in \Reg(G)$ if and only if $\g^1(Ad(g))$ is a Cartan subalgebra of $\g$.  

To prove in the o-minimal context that $\Reg(G)$ is an open dense subset of $G$ is relatively easy, even though we do not know a priori if $r$ is constant on $\Reg(G)$ (Proposition \ref{reglarge}). It becomes more difficult to prove that  $g\in \Reg(G)$ if and only if $r(g)=\rk\, \g$ if and only if $\g^1(Ad(g))$ is a Cartan subalgebra of $\g$ (Theorem \ref{thm:Cartangenerla}).  The proof is again a reduction to the linear algebraic case, passing through the Zariski
closure $G_{1}$ of $\Ad(G)$ inside $\GL(\g_{K})$. To carry out this
reduction we first need to study the behaviour of $\g^1(Ad(g))$  under exact
sequences of definable groups (see Proposition \ref{prop:imageregular}), so we can control it under the adjoint representation. In particular, we will be able to relate the regular points of both $G$ and $G_1$. Once we have proved that if $g\in \Reg(G)$ then $\h:=\g^{1}(\Ad(g))$ is a Cartan subalgebra,  thanks to the main result in Section \ref{sect:CharCar}, we deduce that $C(\h)$ is the unique Cartan subgroup containing $g$ (Proposition \ref{prop:regequivalences}).

\end{remark}

We begin by proving some basic facts about regular points of a definable representation of $G$.


\begin{proposition}\label{prop:reg}Let $g\in G$. Then, $g\in \Reg_\rho(G)$ if and only if $r$ is constant in a neighbourhood of $g$. 
\end{proposition}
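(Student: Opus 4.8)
The plan is to unwind the definitions of $r$ and $r^0$ and exploit the structure of the germ. Recall that $r(g)=\min\{j:a_j(g)\neq 0\}$ and $r^0(g)=\min\{j:(a_j)_g\neq 0\}$, where $(a_j)_g$ denotes the germ of $a_j$ at $g$. The key elementary observation is that $(a_j)_g\neq 0$ means precisely that $a_j$ does not vanish identically on any neighbourhood of $g$, equivalently (using that definable functions are continuous on a large set and that $g$ lies in the closure of the set where $a_j\neq 0$) that $a_j(h)\neq 0$ for some $h$ arbitrarily close to $g$. Throughout I would use Remark\,\ref{remarks:r}: the inequality $r(g)\geq r^0(g)$, the upper-semicontinuity $r(h)\leq r(g)$ for $h$ near $g$, and $r(g)=\dim V^1(\rho(g))$.

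For the forward direction, suppose $g\in\Reg_\rho(G)$, so $r(g)=r^0(g)=:k$. By definition of $r^0$, the germ $(a_k)_g\neq 0$, and since $a_k$ is continuous and $a_k(g)\neq 0$ (as $r(g)=k$), there is a neighbourhood $U$ of $g$ on which $a_k\neq 0$; this gives $r(h)\leq k$ for all $h\in U$. For the reverse inequality on $U$, I would note that for each $j<k$ the germ $(a_j)_g=0$ (by minimality in the definition of $r^0$ together with $r^0(g)=k$), so after shrinking $U$ we may assume $a_j\equiv 0$ on $U$ for every $j<k$; hence $r(h)\geq k$ on $U$. Combining, $r\equiv k$ on $U$, so $r$ is locally constant at $g$.

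For the converse, suppose $r$ is constant equal to $k$ in a neighbourhood $U$ of $g$. Then $r(g)=k$, so $a_k(g)\neq 0$ while $a_j(g)=0$ for all $j<k$. I must show $r^0(g)=k$, i.e. that $(a_j)_g=0$ for every $j<k$. Fix $j<k$; since $r(h)=k$ for all $h\in U$, minimality forces $a_j(h)=0$ for all $h\in U$, so $a_j$ vanishes identically on the neighbourhood $U$ and therefore $(a_j)_g=0$. Thus $r^0(g)\geq k$, and since always $r^0(g)\leq r(g)=k$ by Remark\,\ref{remarks:r}(3), we conclude $r^0(g)=k=r(g)$, i.e. $g\in\Reg_\rho(G)$.

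The argument is essentially bookkeeping with germs, and I do not expect a serious obstacle; the only point requiring mild care is the precise meaning of $(a_j)_g\neq 0$ and why it is equivalent to $a_j$ not vanishing on any neighbourhood of $g$. This rests on the o-minimal fact that a definable function either vanishes identically near a point or is nonzero on a dense (indeed large, by Remark\,\ref{largedense}) subset of every neighbourhood; I would invoke this to pass freely between the germ being nonzero and the existence of nearby points where $a_j\neq 0$. Everything else follows from the two monotonicity facts already recorded in Remark\,\ref{remarks:r}.
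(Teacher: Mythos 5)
Your proof is correct and follows essentially the same route as the paper's: continuity of $a_k$ gives $a_k\neq 0$ (hence $r\leq k$) on a neighbourhood of $g$, vanishing of the germs $(a_j)_g$ for $j<k$ lets you shrink that neighbourhood so that $a_j\equiv 0$ (hence $r\geq k$ and $r^0\equiv k$) on it, and the converse reads off $(a_j)_g=0$ directly from $r\equiv k$ on $U$ together with $r^0(g)\leq r(g)$. The o-minimal dense/large-set fact you invoke at the end is superfluous: $(a_j)_g\neq 0$ means by definition that $a_j$ does not vanish identically on any neighbourhood of $g$, which is all the argument ever uses.
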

\begin{proof}Assume that $g$ is regular and denote $r=r(g)=r^0(g)$. Since $a_r(g)\neq 0$ and $a_j$ is continuous there is a neighbourhood $U$ of $g$ such that $a_r(h)\neq 0$, for all $h\in U$. On the other hand, since $r^0(g)=r$, we know that 
$(a_j)_g=0$, for all $j<r$. So we can assume that $(a_j)_h=0$, for all $h\in U$. This shows that $r^0(h)=r(h)=r$ for all $h\in U$, as required. 
 
Conversely, assume that $r(h)=r$ for all $h$ in a neighbourhood $U$ of $g$. Then, by definition that means that $a_r(h)\neq 0$ and $a_j(h)=0$ for all $h\in U$ and $j<r$, so $(a_j)_h\neq 0$ and $(a_j)_h=0$ for all $h\in U$, so that $r^0(h)=r$ for all $h\in U$. In particular, $g$ (and all the points in $U$) are regular points.
\end{proof}

\begin{proposition}\label{reglarge}$\Reg_\rho(G)$ is an open and dense subset of $G$. 
\end{proposition}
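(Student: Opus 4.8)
The plan is to deduce both assertions from the characterization obtained in Proposition~\ref{prop:reg} (namely that $g\in\Reg_\rho(G)$ if and only if $r$ is constant in a neighbourhood of $g$), combined with the upper semicontinuity of $r$ recorded in Remark~\ref{remarks:r}(2).

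For openness, let $g\in\Reg_\rho(G)$. By Proposition~\ref{prop:reg} there is an open neighbourhood $U$ of $g$ on which $r$ is constant. Then for every $h\in U$ the same set $U$ is an open neighbourhood of $h$ on which $r$ is constant, so $h\in\Reg_\rho(G)$, again by Proposition~\ref{prop:reg}. Hence $U\subseteq\Reg_\rho(G)$, which shows that $\Reg_\rho(G)$ is open.

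For density, I would fix an arbitrary nonempty open set $W\subseteq G$ and produce a regular point inside it. Since $r$ takes values in the finite set $\{0,1,\dots,n\}$, the minimum $m:=\min\{r(g):g\in W\}$ exists; choose $g_0\in W$ with $r(g_0)=m$. By Remark~\ref{remarks:r}(2) there is an open neighbourhood $U'$ of $g_0$ with $r(h)\leq r(g_0)=m$ for all $h\in U'$; replacing $U'$ by $U:=U'\cap W$ we may assume $U\subseteq W$. Then $U\subseteq W$ together with the minimality of $m$ forces $r(h)\geq m$, whence $r\equiv m$ on $U$. Thus $r$ is constant near $g_0$, so $g_0\in\Reg_\rho(G)\cap W$ by Proposition~\ref{prop:reg}, and density follows.

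Neither step is a genuine obstacle once Proposition~\ref{prop:reg} is available; the only point needing care is that the minimum $m$ in the density argument actually exists, which is guaranteed precisely because the range of $r$ is finite. As an alternative route to density one could instead invoke cell decomposition to partition $G$ into finitely many cells on which $r$ is constant: the top-dimensional cells are open, their union is dense since the remaining cells have dimension $<\dim G$ (cf.\ Remark~\ref{largedense}), and $r$ is locally constant on each such cell, so they are contained in $\Reg_\rho(G)$. The semicontinuity argument above is, however, more direct and avoids appealing to the manifold structure.
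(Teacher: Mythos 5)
Your proof is correct, and its density argument takes a genuinely different route from the paper's. The openness step is the same in both (an immediate unwinding of Proposition~\ref{prop:reg}). For density, the paper takes a cell decomposition of $G$ compatible with the level sets $X_i=\{g\in G: r(g)=i\}$ and observes that $r$ is constant on each cell of dimension $\dim G$, so the union of the top-dimensional cells is an open dense set of regular points; you instead run a purely topological ``local minimum of an upper semicontinuous integer-valued function'' argument: in any nonempty open $W$ the finite-valued function $r$ attains its minimum $m$ at some $g_0$, upper semicontinuity (Remark~\ref{remarks:r}(2)) gives $r\leq m$ near $g_0$, and minimality over $W$ gives $r\geq m$ there, so $r$ is constant near $g_0$ and Proposition~\ref{prop:reg} applies. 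Your argument is more elementary and more general: it uses only continuity of the coefficients $a_j$ and would work in any topological group, with no appeal to o-minimality, to cell decomposition, or to the definable manifold structure --- in particular it sidesteps the (unaddressed) point in the paper's proof that a top-dimensional cell, a priori a cell in the ambient $R^n$, is open for the t-topology of $G$. What the paper's route buys is the explicit dimension count that the complement of $\Reg_\rho(G)$ lies in cells of dimension $<\dim G$, i.e.\ largeness; but since $\Reg_\rho(G)$ is definable, largeness follows from density anyway by Remark~\ref{largedense}, so nothing is lost. (Your appended ``alternative route'' is essentially the paper's proof verbatim; one small point in your favour is that you let $r$ range over $\{0,\dots,n\}$, whereas the paper's display $G=X_0\cup\cdots\cup X_{n-1}$ overlooks the value $r(g)=n$, attained e.g.\ at $g=e$.)
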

\begin{proof}$\Reg_\rho(G)$ is open in $G$ because of Proposition \ref{prop:reg}. Let us see that it is dense in $G$. Take $\mathcal{C}$ a cellular decomposition of $G$ compatible with the definable sets $X_i=\{g\in G : r(g)=i\}$ for $i=0,\ldots,n-1$. Clearly $G=X_0\cup \cdots \cup  X_{n-1}$. Let $X$ be the union of the cells of maximal dimension $\dim G$.  Clearly $X$ is open and dense in $G$. On the other hand, if $g\in X$ then $g$ belongs to a cell $C$ of dimension $\dim G$ with $C\subseteq X_i$ for some $i$. In particular, $r$ is constant (equal to $i$) in the open subset $C$ of $G$, thus  $g$ is regular by Proposition\,\ref{prop:reg}. 
\end{proof}

We define the \emph{rank} of a definable group $G$ as $$\rk(G):= \min\{\dim\g^1(\Ad(g)): g\in G\}.$$If $G$ is an irreducible algebraic subgroup of $\GL(n,F)$ we have two concepts of regular point of $G$. Let $\Reg^a(G)$ denote the set of regular points of $G$ in the sense of \cite[Ch.\,VI \S\,4 Def.\,2]{Chevalley55AlgLie}, that is, $$\Reg^a(G):=\{g\in G : r(g)=\rk(G)\}.$$ As we have mentioned in the proof of Theorem\,\ref{denselin}, $\Reg^a(G)$ is a nonempty Zariski open subset of $G$. In fact, we have the following.
\begin{prop}\label{reg^a} If $G$ is an irreducible algebraic subgroup of $\GL(n,F)$ then $\Reg^a(G)=\Reg(G)$.
\end{prop}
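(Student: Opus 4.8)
The plan is to prove the two inclusions separately, using that here $\rho=\Ad$ and $V=\g$, so by Remark\,\ref{remarks:r}(1) we have $r(g)=\dim\g^{1}(\Ad(g))$ and hence $\rk(G)=\min_{g\in G}r(g)$ is simply the global minimum of the function $r$ on $G$, while $\Reg^a(G)=\{g:r(g)=\rk(G)\}$ is exactly the set where this minimum is attained. I will also use the fact, already recorded in the proof of Theorem\,\ref{denselin}, that $\Reg^a(G)$ is a \emph{nonempty Zariski-open} subset of the irreducible group $G$.

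First I would dispatch the inclusion $\Reg^a(G)\subseteq\Reg(G)$, which is immediate from the minimality. Let $g\in\Reg^a(G)$, so $r(g)=\rk(G)$. By Remark\,\ref{remarks:r}(2) there is an open neighbourhood $U$ of $g$ with $r(h)\leq r(g)$ for all $h\in U$; on the other hand $r(h)\geq\rk(G)=r(g)$ for every $h\in G$ since $\rk(G)$ is the minimum of $r$. Hence $r\equiv r(g)$ on $U$, so $r$ is constant near $g$ and $g\in\Reg(G)$ by Proposition\,\ref{prop:reg}.

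For the reverse inclusion $\Reg(G)\subseteq\Reg^a(G)$, the key observation is that $G\setminus\Reg^a(G)=\{g:r(g)>\rk(G)\}$ is the complement of a nonempty Zariski-open set, hence a \emph{proper} Zariski-closed subset of the irreducible $G$, and therefore $\dim\bigl(G\setminus\Reg^a(G)\bigr)<\dim G$. Now take $g\in\Reg(G)$: by Proposition\,\ref{prop:reg}, $r$ is constant, say equal to $r(g)$, on some order-open neighbourhood $U$ of $g$. Since $U$ is a nonempty open subset of the definable manifold $G$ it has dimension $\dim G$, so it cannot be contained in the lower-dimensional set $G\setminus\Reg^a(G)$; pick any $h\in U\cap\Reg^a(G)$. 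Then $r(h)=\rk(G)$ because $h\in\Reg^a(G)$, whereas $r(h)=r(g)$ because $h\in U$, so $r(g)=\rk(G)$ and $g\in\Reg^a(G)$.

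The only genuinely delicate point is this last step, where one must bridge two different topologies: local constancy of $r$ is a statement in the order (t-)topology coming from Proposition\,\ref{prop:reg}, while the smallness of $G\setminus\Reg^a(G)$ is a Zariski phenomenon. The bridge is the dimension comparison — a full-dimensional order-open set cannot be swallowed by a Zariski-closed set of strictly smaller dimension — and this is precisely where the \emph{irreducibility} of $G$ is used, to guarantee that a proper Zariski-closed subset drops dimension. I would therefore take care to invoke irreducibility and the already-cited nonemptiness and Zariski-openness of $\Reg^a(G)$ explicitly at that juncture.
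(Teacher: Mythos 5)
Your proof is correct and follows essentially the same route as the paper: both inclusions rest on Proposition~\ref{prop:reg} together with the nonemptiness and Zariski-openness of $\Reg^a(G)$ and the irreducibility of $G$. The only differences are cosmetic --- for $\Reg^a(G)\subseteq\Reg(G)$ you combine Remark~\ref{remarks:r}(2) with the global minimality of $\rk(G)$ where the paper simply uses that $r$ is constant on the open set $\Reg^a(G)$, and for the converse you spell out, via the dimension drop of the proper Zariski-closed complement, the density assertion (``$\Reg^a(G)$ is dense in $G$ because $G$ is irreducible'') that the paper states in one line.
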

\proof Let $g\in \Reg^a(G)$. Since $\Reg^a(G)$ is open and $r$ is constant on  $\Reg^a(G)$, $r$ is locally constant around $g$, thus $g\in\Reg(G)$ by Proposition\,\ref{prop:reg}. Conversely, let $g\in\Reg(G)$ and take $U$ an open neighbourhood of $g$ such that $r$ is constant on $U$. Since $\Reg^a(G)$ is a Zariski open subset of $G$ it is dense in $G$ because $G$ is irreducible, and so $\Reg^a(G)\cap U\not=\emptyset.$ In particular, $r(g)=\rk(G)$. 
\endproof
\begin{cor}\label{regdense} Let $G$ be a definable subgroup of $\GL(n,F)$ and $\zar{G}{}$ its Zariski closure. Then, the set  $\Reg(\zar{G}{})\cap G$ is dense in $G$.
\end{cor}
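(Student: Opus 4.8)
The plan is to reuse, almost verbatim, the density argument from the proof of Theorem~\ref{denselin}, now with the regular locus in place of the union of Cartan subgroups. First I would invoke the standing hypothesis that $G$ is definably connected, so that Proposition~\ref{zarcl} guarantees that $\zar{G}{}$ is an irreducible algebraic subgroup of $\GL(n,F)$. This irreducibility is exactly what is needed to apply Proposition~\ref{reg^a}, which identifies $\Reg(\zar{G}{})=\Reg^a(\zar{G}{})$; and, as recorded just after the definition of $\Reg^a$ (via \cite[Ch.VI \S\,4 Prop.\,6]{Chevalley55AlgLie}), the set $U:=\Reg^a(\zar{G}{})$ is a \emph{nonempty Zariski open} subset of $\zar{G}{}$. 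Hence the corollary reduces to showing that $U\cap G$ is dense in $G$.

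To prove this density I would argue by contradiction. If $U\cap G$ were not dense in $G$, then some nonempty open subset of $G$ would be disjoint from $U$, so the set $X\cap G=G\setminus U$ would have nonempty interior in $G$, where $X:=\zar{G}{}\setminus U$. Since $U$ is Zariski open in $\zar{G}{}$ and $\zar{G}{}$ is Zariski closed in $\GL(n,F)$, the set $X$ is Zariski closed in $\GL(n,F)$. Proposition~\ref{Zcap} then applies and yields $\zar{G}{}\subseteq X=\zar{G}{}\setminus U$, forcing $U=\emptyset$ and contradicting the nonemptiness of $U$. Therefore $\Reg(\zar{G}{})\cap G=U\cap G$ is dense in $G$.

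The entire argument rests on the ``identity principle'' of Proposition~\ref{Zcap}, exactly as in Theorem~\ref{denselin}, so I do not expect any genuine obstacle. The only point requiring care --- and the sole new ingredient --- is the identification $\Reg(\zar{G}{})=\Reg^a(\zar{G}{})$ together with the Zariski-openness and nonemptiness of this set; both follow from Proposition~\ref{reg^a} and Chevalley's theory once the irreducibility of $\zar{G}{}$ from Proposition~\ref{zarcl} is in hand.
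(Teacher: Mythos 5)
Your proof is correct and is essentially the paper's own argument: the paper's proof of Corollary~\ref{regdense} consists precisely of citing Proposition~\ref{reg^a} together with the Claim inside the proof of Theorem~\ref{denselin}, and you have simply unpacked that Claim (irreducibility of $\zar{G}{}$ via Proposition~\ref{zarcl}, nonempty Zariski-openness of $\Reg^a(\zar{G}{})$, and the contradiction via Proposition~\ref{Zcap}). Your appeal to the standing definable-connectedness hypothesis is also the right reading, since Propositions~\ref{zarcl} and~\ref{Zcap} require it.
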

\proof By Proposition\,\ref{reg^a} and the claim in the proof of Theorem\,\ref{denselin}.
\endproof


The next result will be useful when we consider quotients.
\begin{prop}\label{prop:imageregular}Let  $H$ be a definable group and let  $f:G\rightarrow H$ be a surjective definable homomorphism. Then,

$(1)$ for every $g\in G$ we have the following exact sequence
$$0\rightarrow \mathfrak{K}^1 (\Ad |_\mathfrak{K}(g) )\rightarrow \g^1 (\Ad(g) ) \rightarrow  \h^1 ((\Ad_H\circ f)(g) ) \rightarrow 0,$$
where  $\mathfrak{K}:=\Lie(\Ker(f))$, $\Ad:=\Ad_G$ and $\Ad_H$ are the adjoint representations of $G$ and $H$ respectively, and $\Ad |_\mathfrak{K}:G\rightarrow \GL(\mathfrak{K})$, and 

$(2)$ $f(\Reg (G))\subseteq \Reg (H)$.
\end{prop}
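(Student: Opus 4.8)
The plan is to deduce both statements from a single linear-algebra fact about how generalized eigenspaces behave in short exact sequences of vector spaces equipped with compatible endomorphisms. First I would set up the relevant exact sequence. Since $f$ is a surjective definable homomorphism, $\Ker(f)$ is a normal definable subgroup and $\mathfrak{K}=\Lie(\Ker(f))$ is an ideal of $\g$; normality makes $\mathfrak{K}$ invariant under $\Ad(g)$ for every $g$, and by definition $\Ad|_{\mathfrak{K}}(g)=\Ad(g)|_{\mathfrak{K}}$. The derivative $df\colon\g\to\h$ is surjective with kernel $\mathfrak{K}$, so it identifies $\h$ with $\g/\mathfrak{K}$. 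Differentiating the conjugation identity $f\circ\Int_G(g)=\Int_H(f(g))\circ f$ at the identity yields $df\circ\Ad(g)=\Ad_H(f(g))\circ df$. Thus on the short exact sequence $0\to\mathfrak{K}\to\g\xrightarrow{df}\h\to0$ the three endomorphisms $\Ad|_{\mathfrak{K}}(g)$, $\Ad(g)$ and $\Ad_H(f(g))$ form a compatible triple.

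The core of $(1)$ is then the following claim: given a short exact sequence $0\to V_1\to V\xrightarrow{\pi}V_2\to0$ of $F$-vector spaces and endomorphisms $\phi_1,\phi,\phi_2$ with $\phi|_{V_1}=\phi_1$ and $\pi\circ\phi=\phi_2\circ\pi$, the induced sequence of generalized $1$-eigenspaces $0\to V_1^1(\phi_1)\to V^1(\phi)\to V_2^1(\phi_2)\to0$ is exact. Injectivity and exactness in the middle are immediate from $V_1^1(\phi_1)=V_1\cap V^1(\phi)$ (valid because $\phi$ restricts to $\phi_1$ on $V_1$) together with $\Ker(\pi|_{V^1(\phi)})=V^1(\phi)\cap V_1$. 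For surjectivity I would use the Fitting decomposition $V=V^1(\phi)\oplus W$, where $\phi-\id$ is nilpotent on $V^1(\phi)$ and invertible on the $\phi$-invariant complement $W$; pushing forward by $\pi$ and using that $\pi$ intertwines $\phi$ and $\phi_2$ gives $\pi(V^1(\phi))\subseteq V_2^1(\phi_2)$ and $\pi(W)\cap V_2^1(\phi_2)=0$ (since $1$ is not an eigenvalue of $\phi_2$ on the $\phi_2$-invariant quotient $\pi(W)$ of $W$), whence any $Y\in V_2^1(\phi_2)$ lifts into $V^1(\phi)$. This argument is valid over the possibly non-algebraically-closed $F$ because the Fitting decomposition only uses that $1\in F$, and this is the one step that needs genuine care.

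Finally, for $(2)$ I would extract the dimension identity $\dim\g^1(\Ad(g))=\dim\mathfrak{K}^1(\Ad|_{\mathfrak{K}}(g))+\dim\h^1(\Ad_H(f(g)))$ from the exact sequence of $(1)$. Writing the three rank functions and using that each is upper semicontinuous by Remark \ref{remarks:r}(2), I would argue as follows: if $g\in\Reg(G)$ then by Proposition \ref{prop:reg} the rank $\dim\g^1(\Ad(\,\cdot\,))$ is constant near $g$; since the two summands on the right are upper semicontinuous and their sum is locally constant (continuity of $f$ gives the upper semicontinuity of the second summand as a function on $G$), both summands must themselves be locally constant near $g$. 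In particular $h\mapsto\dim\h^1(\Ad_H(f(h)))$ is constant on a neighborhood $U$ of $g$. Since $f$ factors as $G\to G/\Ker(f)\xrightarrow{\sim}H$ and the quotient map is open, $f(U)$ is a neighborhood of $f(g)$ on which the rank of $H$ is constant, so $f(g)\in\Reg(H)$ by Proposition \ref{prop:reg}. The main obstacle throughout is the surjectivity in the eigenspace lemma; the passage through openness of $f$ in $(2)$ is routine once the dimension identity is in hand.
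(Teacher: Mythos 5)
Your proof is correct, and part (2) is essentially the paper's own argument: the dimension identity extracted from the exact sequence of (1), upper semicontinuity of the rank functions via Remark \ref{remarks:r}(2) forcing each summand to be locally constant where their sum is, and openness of $f$ together with Proposition \ref{prop:reg}. Where you genuinely diverge is in the surjectivity step of (1). The paper passes to the complexification: over $K$ one has the full decomposition $\g_K=\bigoplus_{\lambda\in K}\g_K^\lambda(\Ad(g))$, and comparing $\h_K=d_ef(\g_K)=\bigoplus_\lambda d_ef\bigl(\g_K^\lambda(\Ad(g))\bigr)$ with the corresponding decomposition of $\h_K$ sandwiches $d_ef(\g_K^\lambda(\Ad(g)))=\h_K^\lambda((\Ad_H\circ f)(g))$ for \emph{every} eigenvalue at once, after which the case $\lambda=1$ descends to $R$. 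You instead stay over $F$ and use the Fitting decomposition $V=V^1(\phi)\oplus W$ relative to $\phi-\id$, noting that the induced endomorphism on $\pi(W)\cong W/(W\cap V_1)$ inherits invertibility of $(\phi-\id)|_W$ (surjectivity on the quotient plus finite dimension), so $\pi(W)\cap V_2^1(\phi_2)=0$ and every generalized $1$-eigenvector downstairs lifts into $V^1(\phi)$; this is a complete argument. Your route is more elementary and field-agnostic --- no complexification and no simultaneous root-space decomposition --- and it isolates exactly what the proposition needs, the single eigenvalue $1$; applied over $K$ it also recovers the complexified sequence used later in Lemma \ref{lemma:ginCh}. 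The paper's route buys the stronger statement $d_ef(\g^\lambda(\Ad(g)))=\h^\lambda((\Ad_H\circ f)(g))$ for all $\lambda\in R$ essentially for free once surjectivity of $d_ef$ is in hand. Both proofs identify the kernel the same way, as $\mathfrak{K}^1(\Ad|_{\mathfrak{K}}(g))=\mathfrak{K}\cap\g^1(\Ad(g))$, using that $\mathfrak{K}$ is $\Ad(g)$-invariant by normality of $\Ker(f)$.
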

\begin{proof} 
(1) Consider the exact sequence of Lie algebras 
{$$0\rightarrow \mathfrak{K}_K\rightarrow \mathfrak{g}_K\rightarrow \mathfrak{h}_K\rightarrow 0.$$}
Let $\lambda\in K:=R(i)$. Firstly note that for all $g\in G$ and $Y\in \g_K$ we have that  $(\Ad_H\circ f)(g)(d_ef(Y))=d_ef(\Ad(g)(Y))$ and by induction we deduce that 
 $$[(\Ad_H\circ f)(g)-\lambda \id]^\ell(d_ef(Y))=d_ef\big([\Ad(g)-\lambda \id]^\ell(Y)\big), \text{ for each }\ell\in\N.$$
This shows that $d_ef(\g_K^\lambda(\Ad(g)))\subseteq \h_K^\lambda  ((\Ad_H\circ f)(g) ).$ Since $f$ is surjective and  $\g_K= \bigoplus_{\lambda\in K} \g_K^\lambda(\Ad(g))$, we deduce
$$\h_K=d_ef(\g_K)= \bigoplus_{\lambda\in K} d_ef\big(\g^\lambda_K(\Ad(g))\big)\subseteq \bigoplus_{\lambda\in K} \h_K^\lambda  ((\Ad_H\circ f)(g) )\subseteq \h_K$$
Thus, $d_ef(\g_K^\lambda(\Ad(g)))= \h_K^\lambda  ((\Ad_H\circ f)(g) )$ and hence, for each {$\lambda\in R$},
$d_ef(\g^\lambda(\Ad(g)))= \h^\lambda  ((\Ad_H\circ f)(g) ).$
 In particular, 
$$d_ef(\g^1(\Ad(g)))= \h^1  ((\Ad_H\circ f)(g) ).$$
Finally, the computation of the kernel is easy since $d_ef(Y)=0$ implies that $Y\in \mathfrak{K}\cap\g^1(\Ad(g))=\mathfrak{K}^1(\Ad|_{\mathfrak{K}}(g))$.

(2)  From (1) we have that, for every $g\in G$, 
$$\dim\g^1(\Ad(g))=\dim\mathfrak{K}^1(\Ad |_{\mathfrak{K}}(g))+ \dim\h^1((\Ad_H\circ f)(g)).$$
Therefore, if we denote by $r,r'$ and $r''$ the functions introduced at the beginning of this section of $\Ad, \Ad |_\mathfrak{K}$ and $\Ad_H\circ f$, respectively, then we have proved that
$r(g)=r'(g)+r''(g)$ 
for all $g\in G$.

Now, take $g\in \Reg (G):=\Reg _{\Ad}(G)$. By Proposition \ref{prop:reg}  there exists a neighbourhood $V$ of $g$ such that $r$ is constant in $V$. Moreover, by Remark \ref{remarks:r}(2) we can assume that for all $y\in V$ we have $r'(y)\leq r'(g)$ and $r''(y)\leq r''(g)$. In particular, for all $y\in V$ since
$$r'(y)+r''(y)=r(y)=r(g)=r'(g)+r''(g)$$
and $r'(y)\leq r'(g)$ and $r''(y)\leq r''(g)$, we have that $r'(y)=r'(g)$ and $r''(y)= r''(g)$. So we have that $r''$ is also constant in $V$. 

Finally, since $f$ is a definable homomorphism then it is an open map, so that $f(V)$ is an open neighbourhood of $f(g)$. Moreover, for each $z\in f(V)$ we have that $z=f(y)$ for some $y\in V$ so that
$$\dim\h^1(\Ad_H(z))=\dim\h^1 ((\Ad_H\circ f)(y))=r''(y)$$
so that $\dim\h^1 (\Ad_H(z))$ is constant, for all $z\in f(V)$. Thus, $f(g)\in \Reg (H)$.
\end{proof}

We have also the following corollaries of the above proposition, the first one  will allow us to transfer results from the  linear case to the general definable case.
\begin{cor}\label{cor:imageregular}Let  $H$ be a definable group and let  $f:G\rightarrow H$ be a surjective definable homomorphism with $\Ker(f)=Z(G)$. Then, given $g\in G$ we have 
the following exact sequence
$$0\rightarrow \mathfrak{z}(\g)\rightarrow \g^1(\Ad(g) ) \rightarrow  \h^1 ((\Ad_H\circ f)(g) ) \rightarrow 0.$$
Moreover, $g\in \Reg(G)$ if and only if $f(g)\in  \Reg (H)$.
\end{cor}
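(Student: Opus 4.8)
The plan is to obtain this corollary as the specialization of Proposition~\ref{prop:imageregular} to the case $\Ker(f)=Z(G)$, the only real work being the identification of the left-hand term of the exact sequence. Setting $\mathfrak{K}:=\Lie(\Ker(f))=\Lie(Z(G))$, Proposition~\ref{prop:imageregular}(1) already furnishes the exact sequence
$$0\rightarrow \mathfrak{K}^1(\Ad|_{\mathfrak{K}}(g))\rightarrow \g^1(\Ad(g))\rightarrow \h^1((\Ad_H\circ f)(g))\rightarrow 0,$$
so it suffices to check that $\mathfrak{K}^1(\Ad|_{\mathfrak{K}}(g))=\mathfrak{z}(\g)$ for every $g\in G$.

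First I would identify $\mathfrak{K}$ with $\mathfrak{z}(\g)$. Since $G$ is definably connected, Lemma~\ref{normycen} applied to $H=G$ gives $Z(G)=Z_G(G)=Z_G(\g)$, and then Remark~\ref{norcen} yields $\mathfrak{K}=\Lie(Z_G(\g))=\mathfrak{z}_\g(\g)=\mathfrak{z}(\g)$. Next, because $Z(G)$ is central, conjugation by any $g\in G$ is the identity on $Z(G)$; differentiating, $\Ad(g)|_{\mathfrak{K}}=\id_{\mathfrak{K}}$. Consequently the whole of $\mathfrak{K}$ is the generalized eigenspace of $\Ad(g)|_{\mathfrak{K}}$ for the eigenvalue $1$, i.e. $\mathfrak{K}^1(\Ad|_{\mathfrak{K}}(g))=\mathfrak{K}=\mathfrak{z}(\g)$. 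Substituting this into the sequence above gives the asserted exact sequence.

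For the \emph{moreover} clause I would read off the dimension relation from the exact sequence: writing $s$ for the rank function of $H$ (so $s(f(g))=\dim\h^1(\Ad_H(f(g)))=\dim\h^1((\Ad_H\circ f)(g))$ by surjectivity of $f$) and $d:=\dim\mathfrak{z}(\g)$, the sequence gives
$$r(g)=\dim\g^1(\Ad(g))=d+s(f(g))\qquad\text{for all }g\in G.$$
Since $d$ is constant, $r$ is locally constant around $g$ if and only if $s\circ f$ is. By Proposition~\ref{prop:reg}, the former is equivalent to $g\in\Reg(G)$. For the latter, I would invoke that a surjective definable homomorphism is open and continuous: if $s$ is constant on an open neighbourhood $W$ of $f(g)$ then $s\circ f$ is constant on the open set $f^{-1}(W)\ni g$, and conversely if $s\circ f$ is constant on an open $V\ni g$ then $s$ is constant on the open neighbourhood $f(V)$ of $f(g)$. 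Hence $s\circ f$ is locally constant around $g$ if and only if $s$ is locally constant around $f(g)$, which by Proposition~\ref{prop:reg} applied in $H$ means $f(g)\in\Reg(H)$. Chaining these equivalences gives $g\in\Reg(G)\Leftrightarrow f(g)\in\Reg(H)$.

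The computation is essentially routine once Proposition~\ref{prop:imageregular} is in hand; the only point requiring genuine care is the identification $\mathfrak{K}^1(\Ad|_{\mathfrak{K}}(g))=\mathfrak{z}(\g)$, which hinges on the two facts that $\Lie(Z(G))=\mathfrak{z}(\g)$ and that $\Ad(g)$ acts trivially on it. Note also that the forward implication $f(\Reg(G))\subseteq\Reg(H)$ is already Proposition~\ref{prop:imageregular}(2), so strictly only the converse needs the openness argument.
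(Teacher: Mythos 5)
Your proof is correct and takes essentially the same route as the paper: both specialize Proposition~\ref{prop:imageregular}(1) by identifying $\mathfrak{K}^1(\Ad|_{\mathfrak{K}}(g))=\mathfrak{z}(\g)$ (the paper merely asserts this; you supply the details via Lemma~\ref{normycen} and Remark~\ref{norcen}), and both deduce the \emph{moreover} clause from the resulting dimension identity $\dim\g^1(\Ad(g))=\dim\mathfrak{z}(\g)+\dim\h^1((\Ad_H\circ f)(g))$ by transporting neighbourhoods along the open continuous map $f$ and invoking Proposition~\ref{prop:reg}. The only cosmetic difference is that you prove both directions of the equivalence symmetrically, whereas the paper cites Proposition~\ref{prop:imageregular}(2) for the forward direction and argues only the converse; the underlying computation is identical.
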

\begin{proof} {The exact sequence follows from (1) of Proposition \ref{prop:imageregular} noting that since $\Ker(f)=Z(G)$ we have that
$\mathfrak{K}^1(\Ad |_{\mathfrak{K}}(g))=\mathfrak{z}(\g)$.} Note that in particular,
$$\dim(\g^1(\Ad(g)))=\dim\mathfrak{z}(\g)+\dim\h^1((\Ad_H\circ f)(g)).$$
for all $g\in G$. 

For the last clause, by Proposition\,\ref{prop:imageregular}, it remains to prove that if $f(g)\in \Reg (H)$ then $g\in \Reg (G)$. 
In particular, since $f(g)\in \Reg (H)$ then there is an open neighbourhood $W$ of $f(g)$ in $H$ such that for all $z\in W$ we have that $\dim \h^1 (\Ad_H(z))=\dim\h^1 (\Ad_H(f(g)))$. Thus, for every $y \in f^{-1}(W)$ we have that 
$\dim\g^1(\Ad(y))=\dim\mathfrak{z}(\g)+\dim\h^1((\Ad_H\circ f)(y))$ $=\dim\mathfrak{z}(\g)+\dim\h^1(\Ad_H(f(g)))$
so that $\dim\g^1(\Ad(y))$ is constant for every $y$ in the open neighbourhood $f^{-1}(W)$ of $g$, that is, $g\in \Reg (G)$.
\end{proof}
\begin{cor}\label{esg/h} Let $H$ be a definable normal subgroup of $G$. Then, for every $g\in H$ we have the following exact sequence
$$0\rightarrow \h^1 (\Ad(g) )\rightarrow \g^1 (\Ad(g) ) \rightarrow \g/\h\rightarrow 0.$$
\end{cor}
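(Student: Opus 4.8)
The plan is to read off this corollary as a special case of Proposition~\ref{prop:imageregular}(1) applied to the quotient homomorphism, using the extra hypothesis $g\in H$ to evaluate the rightmost term explicitly. First I would let $f\colon G\to G/H$ be the canonical projection onto the quotient group, which is definable and surjective with $\Ker(f)=H$; thus the kernel's Lie algebra is $\mathfrak{K}:=\Lie(\Ker(f))=\h$, and by Remark~\ref{norcen} the Lie algebra of the target $G/H$ is $\g/\h$, while $\Ad_{G/H}\circ f$ is precisely the composite representation appearing in the proposition.

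Substituting these identifications into the exact sequence of Proposition~\ref{prop:imageregular}(1) yields, for every $g\in G$,
$$0\rightarrow \h^1(\Ad|_{\h}(g))\rightarrow \g^1(\Ad(g))\rightarrow (\g/\h)^1\big((\Ad_{G/H}\circ f)(g)\big)\rightarrow 0.$$
Here the leftmost term already equals the $\h^1(\Ad(g))$ of the statement: since $H$ is normal, $\Ad(g)$ preserves the ideal $\h$, so $\Ad|_{\h}(g)$ is just the restriction $\Ad(g)|_{\h}$ and its generalized $1$-eigenspace inside $\h$ is $\h\cap\g^1(\Ad(g))=\h^1(\Ad(g))$, the first map being the inclusion.

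The only place where the hypothesis $g\in H$ enters is the computation of the third term. Because $\Ker(f)=H$ and $g\in H$, we have $f(g)=e$ in $G/H$, whence $(\Ad_{G/H}\circ f)(g)=\Ad_{G/H}(e)=\id_{\g/\h}$; the generalized eigenspace of the eigenvalue $1$ of the identity map is the whole space, so $(\g/\h)^1(\id)=\g/\h$. Feeding this back gives exactly the asserted sequence. I do not expect any genuine obstacle, since the substantive content lives in Proposition~\ref{prop:imageregular}; the only care needed is in matching the notation of the general proposition (where the target Lie algebra and the kernel Lie algebra become $\g/\h$ and $\h$ respectively) and in the elementary remark that $f$ annihilates $g$ precisely when $g\in H$.
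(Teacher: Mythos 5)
Your proof is correct and follows essentially the same route as the paper's: both apply Proposition~\ref{prop:imageregular}(1) to the projection $G\to G/H$, identify the kernel and quotient Lie algebras with $\h$ and $\g/\h$ (via Remark~\ref{norcen}), and use $g\in H$ only to evaluate the third term as $(\g/\h)^1(\id_{\g/\h})=\g/\h$. Your extra observation that normality of $H$ makes $\h$ an $\Ad(g)$-stable ideal, so that $\h^1(\Ad(g))=\h\cap\g^1(\Ad(g))$ is well defined for every $g\in G$, is a harmless sharpening of the paper's identification of the leftmost term.
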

\proof We apply Proposition \ref{prop:imageregular}  to the projection $\pi:G\to G/H$ and obtain, for each $g\in H$, the exact sequence
$$0\rightarrow \h^1 (\Ad |_{\h}(g) )\rightarrow \g^1 (\Ad(g) ) \rightarrow (\g/\h)^1 (\Ad_{G/H}\pi(g) )\rightarrow 0.$$ We conclude noting that since $g\in H$ we have $\h^1 (\Ad |_{\h}(g) )=\h^1 (\Ad(g) )$ and {$(\g/\h)^1 (\Ad_{G/H}\pi(g))=(\g/\h)^1 (\id_{\g/\h})=\g/\h$}.
\endproof

Next result will allow us to prove  that a regular point of $G$ belongs to  a unique Cartan subgroup of $G$. 
\begin{lemma}\label{lemma:ginCh} Let $\h$ be a Cartan subalgebra of $\g$.  Then $\h \sub\g^1(\Ad(g))$, for any $g\in C(\h)$. 
\end{lemma}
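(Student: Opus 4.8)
The plan is to reduce the containment to showing that the restriction $\Ad(g)|_{\h}$ is \emph{unipotent}, i.e.\ that $(\Ad(g)-\id)^m|_{\h}=0$ for some $m$. This reduction is immediate: since $g\in C(\h)\sub N_G(\h)$ we have $\Ad(g)(\h)=\h$, so each power $(\Ad(g)-\id)^m$ maps $\h$ into $\h$, and the vanishing of such a power on $\h$ says exactly that every $X\in\h$ lies in $\g^1(\Ad(g))$, that is, $\h\sub\g^1(\Ad(g))$.

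The key structural input I would use is that $L:=C(\h)$ is \emph{nilpotent}: by Theorem \ref{Cartan=C-Cartan} it is a Cartan subgroup of $G$, hence maximal nilpotent. Let $c$ be its nilpotency class and let $\mathcal{C}^1L=L\supseteq\mathcal{C}^2L\supseteq\cdots\supseteq\mathcal{C}^{c+1}L=\{e\}$ be its lower central series; these are definable subgroups by Fact \ref{comm} (note $L$ is solvable). The engine of the proof is the commutator--tangent identity already exploited in Lemma \ref{LAcomm}: if $\alpha$ is a definable $C^1$-curve through $e$ with $\overline{\alpha}=X$, then, using that (just as in Remark \ref{sumprod}) the group operation induces the addition on the tangent space at $e$ and inversion induces negation,
\[
(\Ad(g)-\id)(\overline{\alpha})=\overline{\Int(g)\circ\alpha}-\overline{\alpha}=\overline{s\mapsto g\alpha(s)g^{-1}\alpha(s)^{-1}}=\overline{s\mapsto[g,\alpha(s)]}.
\]

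With this in hand I would prove by induction on $k$ that $(\Ad(g)-\id)^k(\h)\sub\Lie(\mathcal{C}^{k+1}L)$. For $k=0$ this is just $\h=\Lie(L)=\Lie(\mathcal{C}^1L)$. For the inductive step, take $X\in\h$ and write $(\Ad(g)-\id)^k(X)=\overline{\beta}$ for a curve $\beta$ through $e$ lying in the definably connected component $(\mathcal{C}^{k+1}L)^o$; then the displayed identity gives $(\Ad(g)-\id)^{k+1}(X)=\overline{s\mapsto[g,\beta(s)]}$, and since $g\in L$ and $\beta(s)\in\mathcal{C}^{k+1}L$ the commutator curve lies in $[L,\mathcal{C}^{k+1}L]=\mathcal{C}^{k+2}L$; being a continuous definable curve through $e$ it lies in $(\mathcal{C}^{k+2}L)^o$, so its tangent vector lies in $\Lie(\mathcal{C}^{k+2}L)$. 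Taking $k=c$ yields $(\Ad(g)-\id)^c(\h)\sub\Lie(\mathcal{C}^{c+1}L)=0$, which is the desired unipotency, and hence $\h\sub\g^1(\Ad(g))$.

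The step I expect to require the most care is the bookkeeping inside the induction: one must check that each iterated image is genuinely represented by a curve lying in the \emph{definably connected} subgroup $(\mathcal{C}^{k+1}L)^o$, so that its tangent vector may legitimately be regarded as an element of $\Lie(\mathcal{C}^{k+1}L)$ and the commutator identity reapplied. This is precisely where the definability of the lower central series (Fact \ref{comm}) and the fact that a definable curve through $e$ remains in the connected component are needed. I would stress that $g$ need not lie in $L^o$; the only thing that matters is that $g$ \emph{and} the curves all lie in the single nilpotent group $L=C(\h)$, which is exactly what membership in the $C$-Cartan subgroup provides.
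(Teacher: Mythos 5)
Your argument is correct, and it takes a genuinely different route from the paper's. The paper proves the lemma by staying inside its algebraic-group machinery: it passes to $G_{1}=\zar{\Ad(G)}{K}$, uses the exact sequence of Corollary \ref{cor:imageregular} to reduce the containment to $\n=\ad(\h_K)\sub\g_{1}^{1}(\Ad_{G_{1}}(\Ad(g)))$, identifies $\zar{\Ad(C(\h))}{K}$ with the Cartan subgroup $Z_{G_{1}}(\n_{s})$ of $G_{1}$ via Corollary \ref{centralizerCartansubgroup}, and then invokes Chevalley's result on irreducible algebraic groups with nilpotent Lie algebra to get that $[\Ad_{G_{1}}(h)-\id]_{|\h_{1}}$ is nilpotent for \emph{every} $h$ in that Zariski closure. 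You instead take the nilpotency of $L=C(\h)$ as a black box from Theorem \ref{Cartan=C-Cartan} and run the classical ``$\Ad$ of an element of a nilpotent group is unipotent on its Lie algebra'' induction along the lower central series, using the commutator--curve identity $(\Ad(g)-\id)(\tg{\beta})=\tg{s\mapsto[g,\beta(s)]}$. This is legitimate: there is no circularity, since Theorem \ref{Cartan=C-Cartan} (via Corollary \ref{centralizerCartansubgroup}, Fact \ref{NCartan} and Propositions \ref{NCar1}--\ref{NCar2}) nowhere uses Lemma \ref{lemma:ginCh}, and the lemma appears in the paper after the theorem; and your bookkeeping steps are all sanctioned by the paper's own conventions -- Fact \ref{comm} gives definability of $\mathcal{C}^{k}L$ for the solvable group $L$, Remark \ref{sumprod} (extended to $G$ exactly as in the proof of Lemma \ref{LAcomm}) gives the commutator identity, and the proof of Lemma \ref{LAcomm} already uses the principle that a definable $C^{1}$-curve through $e$ lying in a definable subgroup has its tangent class in that subgroup's Lie algebra. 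What the two approaches buy: your proof is shorter and intrinsic -- it never leaves $G$, avoids complexification and Zariski closures within the lemma itself, and isolates a reusable general fact (for any definable nilpotent group $L$ and $g\in L$, $\Ad(g)$ is unipotent on $\Lie(L)$); the paper's proof, at the cost of more machinery, yields along the way the identification $\zar{\Ad(C(\h))}{K}=Z_{G_{1}}(\n_{s})$ and the slightly stronger unipotency statement for all elements of the Zariski closure, not just those of $\Ad(C(\h))$. Note that the heavy algebraic input is not truly eliminated in your version -- it is simply concentrated upstream in Theorem \ref{Cartan=C-Cartan} -- but as a proof of this lemma, given where it sits in the paper, yours is a clean and arguably more transparent alternative.
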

\begin{proof} Let  $g\in C(\h)$. Since  $\z(\g)$ belongs to both $\h$ and $\g^1(\Ad(g))$, to prove that $\h\subseteq \g^1(\Ad(g))$ it suffices to show that $\ad(\h)\sub \ad(\g^1(\Ad(g)))$, or equivalently, that $\n:=\ad(\h_K)$ is contained in $\ad(\g_K^1(\Ad(g)))$. Let $G_1:=\zar{\Ad(G)}{K}$ (as in \S\,4).  Note that  $\Lie(\Ad(G))=\ad(\g)$ and
 $$\Ad_{\Ad(G)}(h)=\Ad_{G_1}(h)|_{\ad(\g)} \text{ for each } h\in \Ad(G),$$ so we apply Corollary  \ref{cor:imageregular} to the map $\Ad:G\to \Ad(G)$ and,  after complexifying, obtain
$$0\rightarrow \mathfrak{z}(\g_K) \rightarrow \g_K^1 (\Ad(g) ) \xrightarrow{\ad}  \ad(\g_K)^1 ((\Ad_{G_1}\circ \Ad)(g) ) \rightarrow 0.$$
In particular, since $\ad(\g_K)\sub\g_1$, we get
$$\ad(\g^1_K(\Ad(g)))=\ad(\g_K)^1 (\Ad_{G_1}(\Ad(g)) )= \g^1_1(\Ad_{G_1}(\Ad(g)))\cap \ad(\g_K).$$
Since  $\n$ is contained in $\ad(\g_K)$ it only remains to prove that $$\n\sub \g^1_1(\Ad_{G_1}(\Ad(g))).$$

Let $H_1:=\zar{\Ad(C(\h))}{K}$. We claim that $H_1=Z_{G_1}(\n_s)$.  Indeed, first note that  by Corollary\,\ref{centralizerCartansubgroup} $Z_{G_1}(\n_s)$ is a Cartan subgroup of $G_1$, hence algebraic and irreducible. Thus, it suffices to prove that $\Ad(C(\h))\sub Z_{G_1}(\n_s)$ and $\Lie(H_1)=\z_{\g_1}(\n_s)$. The inclusion is obtained by Fact\,\ref{NCartan} (see proof of Theorem\,\ref{Cartan=C-Cartan}). On the other hand, we have  $$\h_1=a(\Lie(\Ad(C(\h)))=a(\ad(\h))=\z_{\g_1}(\n_s)$$by Proposition\,\ref{zarcl}, Lemma\,\ref{Lie-of-C-h} and Proposition\,\ref{centraliserCSA}, respectively. 

Next, note that, since $a(\n)$ is a nilpotent subalgebra of $\g_1$, each of the endomorphisms in $\ad_{a(\n)}(a(\n))$ is nilpotent.  We also have that $\ad_{a(\n)}(a(\n))$  is the Lie algebra of   $\Ad_{H_1}(H_1)$ which  is an irreducible algebraic group because $H_1$ is so.
 Therefore,   by \cite[Ch\,V \S\,3 Prop.\,14]{Chevalley55AlgLie} we deduce that $\Ad_{H_1}(h)-\id=[\Ad_{G_1}(h) -\id]_{|\h_1}$ is nilpotent, for every $h\in H_1$. In particular, since  $\Ad(C(\h))\sub H_1$ the endomorphism $[\Ad_{G_1}(\Ad(g))-\id]_{|\h_1}:\h_1 \rightarrow \h_1 $ is nilpotent and therefore $\h_1\sub\g_1^1(\Ad_{G_1}(\Ad(g)))$ and a fortiori $\n\sub\g_1^1(\Ad_{G_1}(\Ad(g)))$.
\end{proof}

\begin{prop}\label{thm:regareCartan} Let $G$ be a definably connected subgroup of $\GL(n,F)$ and $\zar{G}{}$ its Zariski closure. For any  $g\in \Reg (\zar{G}{})\cap G$,   $\g^1(\Ad(g))$ is a Cartan subalgebra of $\g$ and $g$ belongs to the Cartan subgroup $C(\g^1(\Ad(g)))$ of $G$.
\end{prop}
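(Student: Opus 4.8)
The plan is to push the information available on the (irreducible, algebraic) Zariski closure $\zar{G}{}$ down to $G$ through the generalized $1$-eigenspace, and then to quote Proposition~\ref{prop:reginCartan} to locate $g$. Write $\g':=\Lie(\zar{G}{})=a(\g)$, using Proposition~\ref{zarcl}, which also tells us (as $G$ is definably connected) that $\zar{G}{}$ is irreducible and that $G\unlhd\zar{G}{}$. Normality of $G$ in $\zar{G}{}$ means conjugation by $g$ preserves $G$, so the operator $\Ad_{\zar{G}{}}(g)$ leaves the ideal $\g\subseteq\g'$ invariant and restricts on it, since $g\in G$, to $\Ad(g)=\Ad_G(g)$. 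The first step is then the purely linear-algebraic identity
$$\g^1(\Ad(g))=\g\cap(\g')^1(\Ad_{\zar{G}{}}(g)),$$
valid because for an invariant subspace the generalized $1$-eigenspace of the restricted endomorphism is the intersection of that subspace with the ambient generalized $1$-eigenspace.

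The second step is to identify $\h':=(\g')^1(\Ad_{\zar{G}{}}(g))$ using the classical theory of regular elements of algebraic groups. By Proposition~\ref{reg^a} we have $g\in\Reg(\zar{G}{})=\Reg^a(\zar{G}{})$, so $g$ is regular in the Chevalley sense on the irreducible algebraic group $\zar{G}{}$. For such a $g$ the generalized $1$-eigenspace $\h'$ is a Cartan subalgebra of $\g'$: this is the group analogue of the Lie-algebra statement, and is exactly the input provided by \cite[Ch.VI \S\,4]{Chevalley55AlgLie}. Concretely, decomposing $\g'$ into root spaces for the unique Cartan subgroup $H'$ of $\zar{G}{}$ containing $g$, the element $g$ acts unipotently on $\Lie(H')$ and with nontrivial eigenvalue on every other root space (this is precisely regularity), whence $\h'=\Lie(H')$; and $\Lie(H')$ is a Cartan subalgebra by Proposition~\ref{defgpLA}(3).

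The third step descends to $\g$ and concludes. Since $\g'=a(\g)$ and $\h'$ is a Cartan subalgebra of $\g'$, the same tool used in the proof of Proposition~\ref{chev}, namely \cite[Ch.VI \S\,4 Prop.\,21]{Chevalley55AlgLie}, shows that $\h'\cap\g$ is a Cartan subalgebra of $\g$. Combined with the eigenspace identity of the first step, $\g^1(\Ad(g))=\h'\cap\g$ is a Cartan subalgebra of $\g$, which is the first assertion. For the second assertion, Proposition~\ref{prop:reginCartan} applies verbatim with $\h:=\g^1(\Ad(g))$ and yields $g\in C(\g^1(\Ad(g)))$; by Theorem~\ref{Cartan=C-Cartan} this $C$-Cartan subgroup is a genuine Cartan subgroup of $G$, as required. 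I expect the main obstacle to be the second step: extracting from the classical algebraic theory the exact statement that the generalized $1$-eigenspace at a regular point is a Cartan subalgebra, and matching Chevalley's notion of regularity with our $\Reg$ via Proposition~\ref{reg^a}. The first and third steps are formal, reusing the invariance of $\g$ and the $a(-)$ machinery already in place for Proposition~\ref{chev}.
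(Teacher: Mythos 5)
Your proposal is correct and takes essentially the same approach as the paper: the invariance identity $\g^1(\Ad(g))=\g\cap\g_1^1(\Ad(g))$ where $\g_1=a(\g)$, the descent via \cite[Ch.VI \S\,4 Prop.\,21]{Chevalley55AlgLie}, and the final appeal to Proposition~\ref{prop:reginCartan} (with Theorem~\ref{Cartan=C-Cartan}) are exactly the paper's steps. The only variation is internal to the classical input: where you identify $\g_1^1(\Ad(g))$ through the unique Cartan subgroup containing the regular element $g$ and its root-space decomposition, the paper instead quotes Chevalley's Prop.\,13 to find a Cartan subalgebra $\h_1\subseteq\g_1^1(\Ad(g))$ and concludes by the rank count $\dim\g_1^1(\Ad(g))=\rk\,\g_1=\dim\h_1$, which is the tidier way to make your parenthetical ``this is precisely regularity'' rigorous.
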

\begin{proof} Let $G_1:=\zar{G}{}$. Let  $g\in \Reg (G_1)\cap G$. By Proposition\,\ref{prop:reginCartan} it suffices to prove  that $\g^1(\Ad(g))$ is a Cartan subalgebra of $\g$. To do this  firstly note that   $\g_1=a(\g)$ and $\g^1(\Ad(g))= \g_1^1(\Ad(g))\cap\g$. Hence, by \cite[Ch\,VI \S\,4 Prop.\,21]{Chevalley55AlgLie}  will be enough to prove that $\g_1^1(\Ad(g))$ is a Cartan subalgebra of $\g_1$. The latter  is classical, here are the details.    By  [\emph{Ibid.} Prop.\,13]  $\g_1^1(\Ad(g)) \supseteq \h_1$ for some Cartan subalgebra $\h_1$ of $\g_1$.  Now, $g\in\Reg(G_1)$ means $\dim\g_1^1(\Ad(g))=\rk\,\g_1$. On the other hand $\h_1$ being a Cartan subalgebra of $\g_1$ has dimension equal to $\rk\,\g_1$, hence $\g_1^1(\Ad(g))=\h_1$.
%
\end{proof}

\begin{prop}\label{prop:regequivalences}Let $G$ be a definably connected subgroup of $\GL(n,F)$ and  $\zar{G}{}$  its Zariski closure. For any $g\in G$, the following conditions are equivalent.
\begin{itemize}
\item[{0)}] $g\in  \Reg (\zar{G}{})$;
\item[{1)}] $g\in  \Reg (G)$;
\item[{2)}] $\dim\g^1(\Ad(g))=\rk\,\g$, and 
\item[{3)}] $\g^1(\Ad(g))$ is a Cartan subalgebra of $\g$.
\end{itemize}
\end{prop}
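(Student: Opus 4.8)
The plan is to fix $G_{1}:=\zar{G}{}$ and reduce everything to the algebraic group $G_{1}$, which by Proposition \ref{zarcl} is irreducible, contains $G$ as a normal subgroup, and has Lie algebra $\g_{1}=a(\g)$. Writing $r$ and $r_{1}$ for the functions from the beginning of this section attached to $\Ad=\Ad_{G}$ and to $\Ad_{G_{1}}$ respectively -- so that $r(h)=\dim\g^{1}(\Ad(h))$ and $r_{1}(h)=\dim\g_{1}^{1}(\Ad_{G_{1}}(h))$ by Remark \ref{remarks:r}(1) -- I would first record that for $h\in G$ the restriction of $\Ad_{G_{1}}(h)$ to the ideal $\g$ is $\Ad(h)$, so that $\g^{1}(\Ad_{G_{1}}(h))=\g^{1}(\Ad(h))$.

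The computational heart will be the dimension shift
$$r_{1}(h)=r(h)+(\dim\g_{1}-\dim\g)\qquad(h\in G),$$
which I obtain by feeding the normal subgroup $G\unlhd G_{1}$ into Corollary \ref{esg/h}: for $h\in G$ this gives the exact sequence $0\rightarrow\g^{1}(\Ad(h))\rightarrow\g_{1}^{1}(\Ad_{G_{1}}(h))\rightarrow\g_{1}/\g\rightarrow 0$, and reading off dimensions yields the identity. I would then combine this with two facts about $G_{1}$: that on $\Reg(G_{1})=\Reg^{a}(G_{1})$ (Proposition \ref{reg^a}) the function $r_{1}$ is constant equal to $\rk\,\g_{1}$, its minimum over $G_{1}$ (as in the proof of Proposition \ref{thm:regareCartan}), and the rank formula $\rk\,\g_{1}=\rk\,\g+\dim\g_{1}-\dim\g$ of Corollary \ref{rka(g)}. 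Together these give, for every $h\in G$, the equivalence $r_{1}(h)=\rk\,\g_{1}\iff r(h)=\rk\,\g$; since the left-hand side says exactly that $h\in\Reg^{a}(G_{1})=\Reg(G_{1})$, this settles the equivalence of $0)$ and $2)$ in one stroke.

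For the equivalence of $1)$ and $2)$ the key observation I would establish is that $\rk\,\g=\min_{h\in G}r(h)$ and that this minimum is attained precisely on $\Reg(G)$. The shift identity together with $r_{1}\geq\rk\,\g_{1}$ forces $r(h)\geq\rk\,\g$ for all $h\in G$, with equality on the dense set $\Reg(G_{1})\cap G$ (Corollary \ref{regdense}), so $\min_{G}r=\rk\,\g$. If $2)$ holds then $r(g)$ is this global minimum, so by Remark \ref{remarks:r}(2) $r$ is locally constant at $g$ and $g\in\Reg(G)$ by Proposition \ref{prop:reg}; conversely, if $g\in\Reg(G)$ then $r$ is constant on a neighbourhood $U$ of $g$, and choosing $h\in U\cap\Reg(G_{1})$ by density gives $r(g)=r(h)=\rk\,\g$. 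Finally $0)\Rightarrow 3)$ is exactly Proposition \ref{thm:regareCartan}, and $3)\Rightarrow 2)$ is immediate since every Cartan subalgebra of $\g$ has dimension $\rk\,\g$; these implications close the cycle $0)\iff 1)\iff 2)$ and $0)\Rightarrow 3)\Rightarrow 2)$, giving all four equivalences.

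The one genuine obstacle is the subtlety flagged in the introduction: a priori $r$ might be locally constant at a value strictly above its global minimum, so being regular in $G$ does not obviously pin down the value $r(g)=\rk\,\g$. My plan circumvents this not by any analytic argument but by transferring to the Zariski closure through Corollary \ref{esg/h} and exploiting that the algebraically regular points -- where the rank function already attains its minimum -- are dense in $G$ (Corollary \ref{regdense}); after that the only work left is the bookkeeping with the rank formula of Corollary \ref{rka(g)}.
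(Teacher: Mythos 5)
Your proposal is correct and takes essentially the same route as the paper: both reduce everything to the Zariski closure via the dimension shift $\dim\g_1^1(\Ad(h))=\dim\g^1(\Ad(h))+\dim\g_1/\g$ from Corollary \ref{esg/h}, the rank formula of Corollary \ref{rka(g)}, the density of $\Reg(\zar{G}{})\cap G$ (Corollary \ref{regdense}), together with Propositions \ref{reg^a}, \ref{prop:reg} and \ref{thm:regareCartan} for $0)\Rightarrow 3)$. The only difference is organizational: you first identify $\rk\,\g$ as the global minimum of $r$ on $G$ and route $1)$ through $2)$ via upper semicontinuity (Remark \ref{remarks:r}(2)), whereas the paper proves $0)\Leftrightarrow 1)$ directly by the same density/shift arguments, so the ingredients and estimates coincide.
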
 
\proof
0) implies 3) follows from Proposition\,\ref{thm:regareCartan}, and  by definition of rank of a Lie algebra we have 
3) implies 2).

0) implies 1): Let $g\in\Reg(\zar{G}{})\cap G$. Let $U\sub G$  open such that $g\in U$ and $\dim\g^1(\Ad(h))\leq \dim\g^1(\Ad(g)) $, for all $h\in U$. Suppose that there is $h\in U$ such that $\dim\g^1(\Ad(h))< \dim\g^1(\Ad(g))$. {Then, there is an open subset $V$ of $U$ such that for all $h\in V$, $\dim\g^1(\Ad(h))< \dim\g^1(\Ad(g))$.  On the other hand, Corollary\,\ref{regdense} implies that there is $h\in V\cap Reg(\zar{G}{})\cap G$, and by 2)  $\g^1(\Ad(h))$ and $\g^1(\Ad(g))$ have equal dimension ($=\rk\,\g$), a contradiction.}
  
1) implies 0): Let $g\in\Reg(G)$. By  Proposition\,\ref{prop:reg} there is an open  neighbourhood $U\sub G$ of $g$ such that for all $h\in U$, we have that 
$\dim\g^1(\Ad(g))=\dim\g^1(\Ad(h))$. On the other hand,  by Proposition\,\ref{zarcl}  $G$ is normal in $\zar{G}{}$ so we can apply Corollary\,\ref{esg/h} and get that for any $g'\in G$   the exact sequence $$0\rightarrow \g^1 (\Ad(g') )\rightarrow \g_1^1 (\Ad(g') ) \rightarrow \g_1/\g\rightarrow 0,$$where $\g_1:=\Lie(\zar{G}{})$. Thus,  $\dim\g_1^1 (\Ad(g') )=\dim\g^1 (\Ad(g') ) + \dim\g_1/\g$, for any $g'\in G$. In particular, for every $h\in U$, $\dim\g_1^1 (\Ad(g) )=\dim\g_1^1 (\Ad(h) )$.  Now, Corollary\,\ref{regdense} implies that there is $h\in U\cap Reg(\zar{G}{})$ and so, for this $h$ we have $\dim\g_1^1 (\Ad(h) )=\rk\,\g_1$. Hence, $g\in\Reg(\zar{G}{})$.

2) implies 0): Let $g\in G$ such that $\dim\g^1(\Ad(g))=\rk\,\g$. Then,  $$\dim\g_1^1(\Ad(g))= \dim\g^1(\Ad(g))+ \dim\g_1/\g= \rk\,\g + \dim\g_1/\g=\rk\,\g_1,$$the first equality as above by Proposition\,\ref{zarcl} and Corollary\,\ref{esg/h}, the last one by Corollary\,\ref{rka(g)}.
\endproof

We recall that  if $\mathfrak a$ is a Lie algebra, $\z$ its centre and $\h$ a vector subspace of $\mathfrak a$ then, $\h$ is a Cartan subalgebra of $\mathfrak a$ if and only if  $\h$ contains $\z$ and $\h/\z$ is a Cartan subalgebra of $\mathfrak a/\z$. In particular, $\rk\,\mathfrak a=\rk(\mathfrak{a}/\z)+\dim\mathfrak{z}$.


\begin{thm}\label{thm:Cartangenerla}Let $G$ be a definably connected group. For any  $g\in G$, the following conditions are equivalent.
\begin{itemize}
\item[{1)}] $g\in  \Reg (G)$;
\item[{2)}] $\dim\g^1(\Ad(g))=\rk\,\g$, and 
\item[{3)}] $\g^1(\Ad(g))$ is a Cartan subalgebra of $\g$.
\end{itemize}
\end{thm}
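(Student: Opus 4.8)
The plan is to reduce the general case to the linear case settled in Proposition \ref{prop:regequivalences}, the passage being effected by the adjoint representation. Consider $\Ad\colon G\to\GL(\g)$; its kernel is the centre $Z(G)$, so $H:=\Ad(G)$ is a definably connected subgroup of $\GL(\g)$ (the continuous image of a definably connected group), and $\Ad\colon G\to H$ is a surjective definable homomorphism with $\Ker(\Ad)=Z(G)$. Writing $\z:=\mathfrak{z}(\g)$, its Lie algebra is $\h=\Lie(H)=\ad(\g)\cong\g/\z$, the isomorphism being $\ad\colon\g/\z\xrightarrow{\sim}\h$. My aim is to show that each of (1), (2), (3) for $G$ at $g$ is equivalent to the same condition for $H$ at $\Ad(g)$, and then invoke Proposition \ref{prop:regequivalences} for the linear group $H$.

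Condition (1) transfers at once: by Corollary \ref{cor:imageregular}, $g\in\Reg(G)$ if and only if $\Ad(g)\in\Reg(H)$. That corollary also furnishes the exact sequence
$$0\to\z\to\g^1(\Ad(g))\xrightarrow{\ad}\h^1((\Ad_H\circ\Ad)(g))\to 0,$$
which is the backbone for the remaining two transfers (the map is exactly $\ad$ because $\Ad_H(\Ad(h))$ is the restriction to $\ad(\g)$ of the ambient action, as used in Lemma \ref{lemma:ginCh}). For condition (2), the sequence gives $\dim\g^1(\Ad(g))=\dim\z+\dim\h^1((\Ad_H\circ\Ad)(g))$, while the remark preceding the theorem gives $\rk\,\g=\rk\,\h+\dim\z$ via $\h\cong\g/\z$. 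Subtracting $\dim\z$ shows that $\dim\g^1(\Ad(g))=\rk\,\g$ holds if and only if $\dim\h^1((\Ad_H\circ\Ad)(g))=\rk\,\h$, i.e. (2) for $G$ is equivalent to (2) for $H$.

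For condition (3), the exact sequence shows $\z\subseteq\g^1(\Ad(g))$ and that $\ad$ carries $\g^1(\Ad(g))/\z$ isomorphically onto $\h^1((\Ad_H\circ\Ad)(g))$, compatibly with $\ad\colon\g/\z\xrightarrow{\sim}\h$. By the quotient-by-centre remark, $\g^1(\Ad(g))$ is a Cartan subalgebra of $\g$ if and only if $\g^1(\Ad(g))/\z$ is a Cartan subalgebra of $\g/\z$, which under the isomorphism amounts to $\h^1((\Ad_H\circ\Ad)(g))$ being a Cartan subalgebra of $\h$; that is (3) for $H$. Combining the three equivalences with Proposition \ref{prop:regequivalences} applied to $H$ closes the cycle $(1)\Leftrightarrow(2)\Leftrightarrow(3)$ for $G$. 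The only delicate point, and the one I would write out with care, is the bookkeeping that identifies $\g^1(\Ad(g))/\z$ with $\h^1((\Ad_H\circ\Ad)(g))$ compatibly with the ambient isomorphism $\g/\z\cong\h$; this rests squarely on the map in Corollary \ref{cor:imageregular} being $\ad$ and on the kernel being precisely $\z$. Everything else is a direct transfer through that corollary and the two centre/rank remarks.
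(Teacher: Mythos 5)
Your proof is correct and takes essentially the same approach as the paper's: reduce to the linear group $H=\Ad(G)\leq\GL(\g)$ via Corollary \ref{cor:imageregular} (with $\Ker(\Ad)=Z(G)$ and the map in the exact sequence being $\ad$), combine the resulting dimension identity with $\rk\,\g=\rk\,\h+\dim\mathfrak{z}(\g)$ and the quotient-by-centre criterion for Cartan subalgebras, and invoke Proposition \ref{prop:regequivalences} for $H$. The only difference is organizational: you transfer each of the three conditions bidirectionally between $G$ and $H$ before citing the linear case, whereas the paper proves the cycle $1)\Rightarrow 2)\Rightarrow 3)\Rightarrow 1)$ directly with the same ingredients.
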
 
\begin{proof}Let $f:G\rightarrow H\leq \GL(n,R)$ be the map $f(g)=\Ad_G(g)$, for any $g\in G$, and $H:= \Ad_G(G)$. Note that $\Ker(f)=Z(G)$ and   $\rk\,\g=\rk\,\h+\dim\mathfrak{z}(\g)$.

1) implies 2): Let  $g\in  \Reg (G)$. By Corollary \ref{cor:imageregular} we have that $f(g)\in  \Reg (H)$. By Proposition\,\ref{prop:regequivalences} we have that $\dim\h^1(\Ad_H(f(g)))=\rk\,\h$. By Corollary \ref{cor:imageregular} again we have that
$$\dim\g^1(\Ad_G(g))=\dim\mathfrak{z}(\g)+\dim\h^1(\Ad_H(f(g)))=\dim\mathfrak{z}(\g)+\rk\,\h=\rk\,\g$$
as required.

2) implies 3): Take $g\in G$ with $\dim\g^1(\Ad(g))=\rk\,\g$. By Corollary\,\ref{cor:imageregular} we have that
$$\dim\h^1(\Ad_H(f(g))=\dim\g^1(\Ad_G(g))-\dim\mathfrak{z}(\g)=\rk\,\g-\dim\mathfrak{z}(\g)=\rk\,\h,$$
so that by Proposition\,\ref{prop:regequivalences} we have that $\h^1(\Ad_H(f(g))$ is a Cartan subalgebra of $\h$.
Since $\h^1(\Ad_H(f(g))=\ad(\g^1(\Ad(g)))$ because of Corollary \ref{cor:imageregular}, we deduce that $\g^1(\Ad(g))$ is a Cartan subalgebra of $\g$.

3) implies 1): Let $g\in G$ with $\g^1(\Ad(g))$ a Cartan subalgebra of $\g$. By Corollary \ref{cor:imageregular} we have that $\h^1(\Ad_H(f(g))=\ad(\g^1(\Ad_G(g)))$ and therefore  $\h^1(\Ad(f(g))$ is a Cartan subalgebra of $\h$. Thus, by Proposition\,\ref{prop:regequivalences}  $f(g)$ is regular element of $H$ and so $g$ is regular in $G$ by Corollary \ref{cor:imageregular}. 
\end{proof}

\begin{cor}\label{cor:reguniqueCar}Let $G$ be a definably connected group.  Then, $\Reg(G)$ is a dense subset of $G$, and if $g\in \Reg (G)$ then $g$ belongs to a unique Cartan subgroup of $G$.
\end{cor}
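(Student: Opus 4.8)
The density of $\Reg(G)$ is already in hand: it is the special case $\rho=\Ad$ of Proposition \ref{reglarge}, which gives that $\Reg_\rho(G)$ is open and dense for every definable representation. So the only work left is the statement about Cartan subgroups, and the plan is to read off both existence and uniqueness from the equivalences established in the previous results, without any new construction.

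For existence, I would fix $g\in\Reg(G)$ and set $\h:=\g^1(\Ad(g))$. By Theorem \ref{thm:Cartangenerla} (the implication 1)$\Rightarrow$3)) the space $\h$ is a Cartan subalgebra of $\g$. Proposition \ref{prop:reginCartan} then places $g$ in the $C$-Cartan subgroup $C(\h)$, and by Theorem \ref{Cartan=C-Cartan} this $C$-Cartan subgroup is an honest Cartan subgroup of $G$. Hence $g$ lies in at least one Cartan subgroup, namely $C(\g^1(\Ad(g)))$.

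For uniqueness, suppose $g$ belongs to an arbitrary Cartan subgroup $H$ of $G$. By Theorem \ref{Cartan=C-Cartan} I may write $H=C(\h')$, where $\h':=\Lie(H^o)$ is a Cartan subalgebra of $\g$ by Proposition \ref{NCar1}. Since $g\in C(\h')$, Lemma \ref{lemma:ginCh} yields $\h'\sub\g^1(\Ad(g))=\h$. Now both $\h'$ and $\h$ are Cartan subalgebras of $\g$, so they share the common dimension $\rk\,\g$ (the base field has characteristic $0$); an inclusion of equidimensional subspaces is an equality, whence $\h'=\h$ and therefore $H=C(\h')=C(\h)$. This pins down the Cartan subgroup through $g$ uniquely, which closes the argument.

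The argument has no genuinely hard step of its own, since all the weight has been transferred to the earlier reductions through the Zariski closure and the characterisation via $C$-Cartan subgroups. The point that most deserves double-checking is the applicability of Lemma \ref{lemma:ginCh}: it requires $\h'$ to be a Cartan subalgebra and $g\in C(\h')$, both of which are furnished exactly by Theorem \ref{Cartan=C-Cartan} together with Proposition \ref{NCar1}. What makes uniqueness work is then the comparison of dimensions against the common value $\rk\,\g$, which upgrades the one-sided inclusion coming from the lemma into the equality of Cartan subalgebras.
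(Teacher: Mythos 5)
Your proposal is correct and follows essentially the same route as the paper's own proof: density via Proposition~\ref{reglarge}, existence via Theorem~\ref{thm:Cartangenerla} together with Proposition~\ref{prop:reginCartan}, and uniqueness by combining Theorem~\ref{Cartan=C-Cartan} with Lemma~\ref{lemma:ginCh} and the fact that the inclusion of one Cartan subalgebra in another forces equality. Your only addition is to spell out explicitly that this last step uses the equidimensionality of Cartan subalgebras in characteristic~$0$, which the paper leaves implicit.
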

\begin{proof} The first statement  is by Proposition\,\ref{reglarge}. Also, we already know that $\g^1(\Ad(g))$ is Cartan subalgebra of $\g$ and thus $C(\g^1(\Ad(g)))$ is a Cartan subgroup containing $g$ by Proposition\,\ref{prop:reginCartan}. If $H$ is another Cartan subgroup of $G$ with $g\in H$, then $H=C(\h)$, where $\h$ is the Lie algebra of $H$, by Theorem\,\ref{Cartan=C-Cartan}. By Lemma \ref{lemma:ginCh} we have that $\h\sub\g^1(\Ad(g))$, and since both $\h$ and $\g^1(\Ad(g))$ are Cartan subalgebras of $\g$, we get $\h= \g^1(\Ad(g))$. So that $H=C(\h)=C(\g^1(\Ad(g)))$, as required. 
\end{proof}

\begin{rem} Just for the record, we write an alternative proof of the above corollary in the linear case (the general case can be deduced easily from this).  Let $G$ be a definable subgroup of $\GL(n,F)$  and $G_1:=\zar{G}{}$. Let $g\in \Reg (G)$ and let $H$ and $\widetilde{H}$ be Cartan subgroups of $G$ with $g\in H\cap \widetilde{H}$. Let $\h$ and $\widetilde{\h}$ be the Lie algebras of $H$ and $\widetilde{H}$ respectively, which are Cartan subalgebras of $\g$.  Let  $\h_1=a(\h)$ and $\widetilde{\h}_1=a(\widetilde{\h})$.  By  \cite[Ch.VI \S\,4 Prop.21]{Chevalley55AlgLie} we have that $\h_1$ and $\widetilde{\h}_1$ are Cartan subalgebras of $\g_1$. Let $H_1$ and $\widetilde{H}_1$ be the Cartan subgroups whose Lie algebra are $\h_1$ and $\widetilde{\h}_1$ respectively (see \cite[Ch.VI \S\,4 Prop.5]{Chevalley55AlgLie}).

On the other hand, take the Zariski closure $\zar{H}{}$ of $H$ in $G_1$. Clearly, the Lie algebra $\h_1$ is contained in the Lie algebra of $\zar{H}{}$, and therefore, since $H_1$ is irreducible \cite[Ch.VII \S\,4 Thm.2]{BourbLALG7-9} we have that $H_1$ is a subgroup of $\zar{H}{}$. But since $H$ is nilpotent we have that $\zar{H}{}$ is nilpotent, so by maximality of $H_1$ we get that 
$H_1=\zar{H}{}$. By a similar argument, $\widetilde{H}_1=\zar{\widetilde{H}}{}$. 

Now, by Proposition\,\ref{prop:regequivalences} we have that $g\in \Reg (G_1)$, so that $g$ belongs to a unique Cartan subgroup (see \cite[Ch.VI \S\,4 Thm.2]{BourbLALG7-9}). Thus, we have that $H_1=\widetilde{H}_1$, and therefore $\h_1=\widetilde{\h}_1$. In particular, the Cartan algebras $\h=\h_1\cap \g$ and $\widetilde{\h}=\widetilde{\h}_1\cap \g$ are equal, so that $\h=\widetilde{\h}$. Finally, $H=C(\h)=C(\widetilde{\h})=\widetilde{H}$, as required.

\end{rem}

\bibliographystyle{plain}
\bibliography{biblio}

\def\Dbar{\leavevmode\lower.6ex\hbox to 0pt{\hskip-.23ex \accent"16\hss}D}
\begin{thebibliography}{10}

\bibitem{BJO12Comm}
El{\'{\i}}as Baro, Eric Jaligot, and Margarita Otero.
\newblock Commutators in groups definable in o-minimal structures.
\newblock {\em Proc. Amer. Math. Soc.}, 140(10):3629--3643, 2012.

\bibitem{BJO14Cartan}
El{\'{\i}}as Baro, Eric Jaligot, and Margarita Otero.
\newblock Cartan subgroups of groups definable in o-minimal structures.
\newblock {\em J. Inst. Math. Jussieu}, 13(4):849--893, 2014.

\bibitem{BOPP05DCC}
Alessandro Berarducci, Margarita Otero, Ya'acov Peterzil, and Anand Pillay.
\newblock A descending chain condition for groups definable in o-minimal
  structures.
\newblock {\em Ann. Pure Appl. Logic}, 134(2-3):303--313, 2005.

\bibitem{Borel91LinearAlgGroups}
Armand Borel.
\newblock {\em {Linear Algebraic Groups}}, volume 126 of {\em Graduate Texts in
  Mathematics}.
\newblock Springer New York, New York, NY, 1991.

\bibitem{BourbLALG7-9}
Nicolas Bourbaki.
\newblock {\em Lie groups and {L}ie algebras. {C}hapters 7--9}.
\newblock Elements of Mathematics (Berlin). Springer-Verlag, Berlin, 2005.
\newblock Translated from the 1975 and 1982 French originals by Andrew
  Pressley.

\bibitem{Chevalley55AlgLie}
Claude Chevalley.
\newblock {\em Th\'eorie des groupes de {L}ie. {T}ome {III}. {T}h\'eor\`emes
  g\'en\'eraux sur les alg\`ebres de {L}ie}.
\newblock Actualit\'es Sci. Ind. no. 1226. Hermann \& Cie, Paris, 1955.

\bibitem{Conversano13}
Annalisa Conversano.
\newblock Maximal compact subgroups in the o-minimal setting.
\newblock {\em J. Math. Log.}, 13(1):1350004, 15, 2013.

\bibitem{Edmundo03}
M{\'a}rio~J. Edmundo.
\newblock Solvable groups definable in o-minimal structures.
\newblock {\em J. Pure Appl. Algebra}, 185(1-3):103--145, 2003.

\bibitem{EO04DCAG}
M{\'a}rio~J. Edmundo and Margarita Otero.
\newblock Definably compact abelian groups.
\newblock {\em J. Math. Log.}, 4(2):163--180, 2004.

\bibitem{Frecon17Lin}
Olivier Fr\'econ.
\newblock Linearity of groups definable in o-minimal structures.
\newblock {\em Selecta Math. (N.S.)}, 23(2):1563--1598, 2017.

\bibitem{Hochschild81AlgGroups}
Gerhard~P. Hochschild.
\newblock {\em {Basic Theory of Algebraic Groups and Lie Algebras}}, volume~75
  of {\em Graduate Texts in Mathematics}.
\newblock Springer New York, New York, NY, 1981.

\bibitem{Hofmann92NearCartan}
Karl~H. Hofmann.
\newblock Near-{C}artan algebras and groups.
\newblock {\em Sem. Sophus Lie}, 2(2):135--151, 1992.

\bibitem{HPP08NIP}
Ehud Hrushovski, Ya'acov Peterzil, and Anand Pillay.
\newblock Groups, measures, and the {NIP}.
\newblock {\em J. Amer. Math. Soc.}, 21(2):563--596, 2008.

\bibitem{Neeb94Closedness}
Karl-Hermann Neeb.
\newblock On closedness and simple connectedness of adjoint and coadjoint
  orbits.
\newblock {\em Manuscripta Mathematica}, 82(1):51--65, 1994.

\bibitem{Neeb96Weaklyexp}
Karl-Hermann Neeb.
\newblock Weakly exponential {L}ie groups.
\newblock {\em J. Algebra}, 179(2):331--361, 1996.

\bibitem{Otero08}
Margarita Otero.
\newblock A survey on groups definable in o-minimal structures.
\newblock In {\em Model theory with applications to algebra and analysis.
  {V}ol. 2}, volume 350 of {\em London Math. Soc. Lecture Note Ser.}, pages
  177--206. Cambridge Univ. Press, Cambridge, 2008.

\bibitem{PPS00Def}
Ya'acov Peterzil, Anand Pillay, and Sergei Starchenko.
\newblock Definably simple groups in o-minimal structures.
\newblock {\em Trans. Amer. Math. Soc.}, 352(10):4397--4419, 2000.

\bibitem{PPS02Lin}
Ya'acov Peterzil, Anand Pillay, and Sergei Starchenko.
\newblock Linear groups definable in o-minimal structures.
\newblock {\em J. Algebra}, 247(1):1--23, 2002.

\bibitem{Pillay88}
Anand Pillay.
\newblock On groups and fields definable in {$o$}-minimal structures.
\newblock {\em J. Pure Appl. Algebra}, 53(3):239--255, 1988.

\bibitem{Strzebonski94}
Adam~W. Strzebonski.
\newblock Euler characteristic in semialgebraic and other {${\rm o}$}-minimal
  groups.
\newblock {\em J. Pure Appl. Algebra}, 96(2):173--201, 1994.

\bibitem{Wilkie96exp}
Alex.~J. Wilkie.
\newblock Model completeness results for expansions of the ordered field of
  real numbers by restricted {P}faffian functions and the exponential function.
\newblock {\em J. Amer. Math. Soc.}, 9(4):1051--1094, 1996.

\end{thebibliography}
\end{document}